   \providecommand{\keywords}[1]{\textbf{\textit{Key words:}} #1}
 \numberwithin{equation}{section}
 \newtheorem{thm}{Theorem}[section]
 \newtheorem{lemma}[thm]{Lemma}
 \newtheorem{definition}[thm]{Definition}
 \newtheorem{cor}[thm]{Corollary}
 \newtheorem{prop}[thm]{Proposition}
 \newtheorem{rem}[thm]{Remark}
 \newtheorem{ex}[thm]{Example}
  \newtheorem{pr}[thm]{Problem}
\newcommand{\tabincell}[2]{\begin{tabular}{@{}#1@{}}#2\end{tabular}}
\begin{document}
\title{\textbf {Vector Bundles on Rational Homogeneous Spaces}}

\author{Rong Du \thanks{School of Mathematical Sciences
Shanghai Key Laboratory of PMMP,
East China Normal University,
Rm. 312, Math. Bldg, No. 500, Dongchuan Road,
Shanghai, 200241, P. R. China,
rdu@math.ecnu.edu.cn.
The Research is Sponsored by National Natural Science Foundation of China (Grant No. 11531007), Natural Science Foundation of China and the Israel Science Foundation (Grant No. 11761141005) and Science and Technology Commission of Shanghai Municipality (Grant No. 18dz2271000).},
Xinyi Fang
\thanks{School of Mathematical Sciences
Shanghai Key Laboratory of PMMP,
East China Normal University,
No. 500, Dongchuan Road,
Shanghai, 200241, P. R. China,
2315885681@qq.com.
The Research is Sponsored by National Natural Science Foundation of China (Grant No. 11531007) and Science and Technology Commission of Shanghai Municipality (Grant No. 18dz2271000).}
and Yun Gao
\thanks{School of Mathematical Sciences,
Shanghai Jiao Tong University,
Shanghai 200240, P. R. of China,
gaoyunmath@sjtu.edu.cn.
The Research is Sponsored by National Natural Science Foundation of China (Grant No. 11531007).
}
}

\date{}
\maketitle

\begin{center}
{Dedicate to people who devote their lives to fight against coronaviruses}
\end{center}

\begin{abstract}
We consider a uniform $r$-bundle $E$ on a complex rational homogeneous space $X$ 
 and show that if $E$ is poly-uniform with respect to all the special families of lines and the rank $r$ is less than or equal to some number that depends only on $X$, then $E$ is either a direct sum of line bundles or $\delta_i$-unstable for some $\delta_i$. So we partially answer a problem posted by Mu\~{n}oz-Occhetta-Sol\'{a} Conde(\cite{M-O-C3}). In particular, if $X$ is a generalized Grassmannian $\mathcal{G}$ and the rank $r$ is less than or equal to some number that depends only on $X$, then $E$ splits as a direct sum of line bundles. We improve the main theorem of Mu\~{n}oz-Occhetta-Sol\'{a} Conde (\cite{M-O-C2} Theorem 3.1) when $X$ is a generalized Grassmannian by considering the Chow ring. Moreover, by calculating the relative tangent bundles between two rational homogeneous spaces, we give explicit bounds for the generalized Grauert-M\"{u}lich-Barth theorem on rational homogeneous spaces.
\end{abstract}
\keywords{vector bundle, generalized Grassmannian, rational homogeneous space}

\section{Introduction}
Algebraic vector bundles on a projective variety $X$ over complex number field $\mathbb{C}$ are fundamental research objects in algebraic geometry. However, up to now, algebraic vector bundles are still mysterious for general projective varieties. According to Serre (GAGA), the classification of algebraic vector bundles over $\mathbb{C}$ is equivalent to the classification of holomorphic vector bundles. So there are not only algebraic ways but also analytic ways to handle the problems of vector bundles. For simplicity, we just call vector bundles of rank $r$ or $r$-bundles in the context.

If $X$ is $\mathbb{P}^1$, the structure of a vector bundle on $X$ is quite clear because Grothendieck tells us that it splits as a direct sum of line bundles. However, if $X$ is a projective space and the dimension of it is bigger than or equal to two then the structures of vector bundles on $X$ are not so easy to be determined. Since projective spaces are covered by lines, it is a natural way to consider the restriction of vector bundles to lines. From Grothendieck's result, they split after being restricted to $\mathbb{P}^1$. By semicontinuity theorem, for ``almost all'' lines, $r$-bundle $E$ has the constant splitting type. This means that the lines to which the vector bundle restricts having different splitting type consist a closed subset of a Grassmannian. If the closed subset is empty, such bundles are called uniform vector bundles. Uniform bundles are widely studied not only on projective spaces (\cite{Sch} \cite{Ven} \cite{Sat} \cite{Ele} \cite{E-H-S} \cite{Ell} \cite{Bal}) but also on special Fano manifolds of Picard number one (\cite{Bal2} \cite{K-S} \cite{Guy} \cite{D-F-G} \cite{M-O-C2}). Please see Introduction in \cite{D-F-G} for the details.

Instead of considering vector bundles, Occhetta-Sol\'{a} Conde-Wi\'{s}niewski (\cite{O-C-W}) studied flag bundles which are constructed upon the action of the defining group $G$ on the flag manifold $G/B$. Recently, Mu\~{n}oz-Occhetta-Sol\'{a} Conde (\cite{M-O-C}) studied uniform principle $G$-bundles with $G$ semisimple over Fano manifolds. They present a number of theorems that are flag bundle's versions of some of the central results in the theory of uniform vector bundles. More precisely, they paid special attention to homogeneous filtrations of relative tangent bundles between flag manifolds and generalized the different standard decomposability notions of vector bundles. They used an interesting concept of "tag" of a $G/B$-bundle to describe diagonalizability of any uniform flag bundle of low rank. But they mainly focus on $X$ with Picard number one. In \cite{M-O-C3}, the authors proposed a problem as follows.

\begin{pr}
Classify low rank uniform principle $G$-bundles ($G$ semisimple algebraic group) on rational homogeneous spaces.
\end{pr}

In this paper, we consider uniform bundles on generalized flag varieties or even rational homogeneous spaces with arbitrary Picard numbers and give partial answers to this problem.

Let \[X=G/P\simeq G_1/P_{I_1}\times G_2/P_{I_2}\times \cdots \times G_m/P_{I_m}, \] where $G_i$ is a simple Lie group with \emph{Dynkin diagram} $\mathcal{D}_i$ whose set of nodes is $D_i$ and $P_{I_i}$ is a parabolic subgroup of $G_i$ corresponding to $I_i\subset D_i$. We set $F(I_i):=G_i/P_{I_i}$ by marking on the Dynkin diagram $\mathcal{D}_i$ of $G_i$ the nodes corresponding to $I_i$. Let $\delta_i$ be a node in $\mathcal{D}_i$ and $N(\delta_i)$ be the set of nodes in $\mathcal{D}_i$ that are connected to $\delta_i$.

 If $\delta_i\in I_i$, we call $$\mathcal{M}_i^{\delta_i^c}:=G_i/P_i^{\delta_i^c}\times \widehat{G_i/P_{I_i}}~ (1\le i\le m),$$ the \emph{$i$-th special family} of lines of class $\check{\delta}_i$, where $P_i^{\delta_i^c}:=P_{I_i\backslash \delta_i\cup N(\delta_i)}$ and $\widehat{G_i/P_{I_i}}$ is $G_1/P_{I_1}\times G_2/P_{I_2}\times \cdots \times G_m/P_{I_m}$ by deleting $i$-th term $G_i/P_{I_i}$. Denote by
 $$\mathcal{U}_i^{\delta_i^c}:=G_i/P_{I_i\cup N(\delta_i)}\times \widehat{G_i/P_{I_i}}$$ the \emph{$i$-th universal family} of class $\check{\delta}_i$, which has a natural $\mathbb{P}^1$-bundle structure over $\mathcal{M}_i^{\delta_i^c}$.


We separate our discussion into two cases:

Case I: $N(\delta_i)\subseteq I_i$, then $\mathcal{U}_i^{\delta_i^c}=X$ and we have the natural projection $X\rightarrow \mathcal{M}_i^{\delta_i^c}$;

Case II: $N(\delta_i)\nsubseteq I_i$, then we have the standard diagram
\begin{align}
\xymatrix{
\mathcal{U}_i^{\delta_i^c}\ar[d]^{q_1}   \ar[r]^-{q_2} & \mathcal{M}_i^{\delta_i^c} \\
X.
}
\end{align}


For the definition of "poly-uniform", "$\delta_i$-stable", "$\delta_i$-unstable", "$\varsigma(\mathcal{G})$" and "$\nu(X)$", please see Section 3.

\begin{thm}
On $X$, if an $r$-bundle $E$ is poly-uniform with respect to all the special families of lines and $r\le \nu(X)-2$, then $E$ is $\delta_i$-unstable for some $\delta_i$ ($1\le i \le m$) or $E$ splits as a direct sum of line bundles.
\end{thm}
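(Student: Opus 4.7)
The plan is to adapt the classical Grauert--Mülich/Van de Ven strategy to the multi-family situation arising from the special families $\mathcal{M}_i^{\delta_i^c}$. For each $i$, use the incidence diagram $X\xleftarrow{q_1}\mathcal{U}_i^{\delta_i^c}\xrightarrow{q_2}\mathcal{M}_i^{\delta_i^c}$ (with $\mathcal{U}_i^{\delta_i^c}=X$ in Case~I, where $q_2$ becomes the projection itself). Since $E$ is poly-uniform, the restriction of $E$ to every line of the $i$-th family has the same splitting type, with distinct jumps $a_{i,1}>\cdots>a_{i,s_i}$ of multiplicities $r_{i,1},\dots,r_{i,s_i}$ summing to $r$.

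First I would construct on each $\mathcal{U}_i^{\delta_i^c}$ the relative Harder--Narasimhan filtration of $q_1^*E$ along the $\mathbb{P}^1$-fibers of $q_2$. This produces a flag $0=F_i^0\subset F_i^1\subset\cdots\subset F_i^{s_i}=q_1^*E$ of subbundles whose successive quotients restrict on each fiber to $\mathcal{O}_{\mathbb{P}^1}(a_{i,j})^{\oplus r_{i,j}}$. The crucial next step is to descend each $F_i^j$ along $q_1$ to a subbundle of $E$ on $X$. This is where the hypothesis $r\leq\nu(X)-2$ enters: after computing the relative tangent bundle $T_{q_1}$ between the two rational homogeneous spaces $\mathcal{U}_i^{\delta_i^c}$ and $X$ (as promised in the abstract), the obstructions to descent become cohomological conditions involving $T_{q_1}$ and $\operatorname{End}$-sheaves of the graded quotients that are guaranteed to vanish once $\nu(X)$ is sufficiently large relative to $r$.

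Once all filtrations $\widetilde F_i^\bullet$ have been transported to $X$, I would show that the $m$ filtrations are pairwise compatible and take their common refinement. The joint graded pieces restrict on every special line to isotypic direct sums of line bundles of a single degree. If each joint graded piece has rank one, they are line bundles, and a splitting criterion coming from Borel--Weil--Bott vanishing of the relevant $\operatorname{Ext}^1$-groups on $X$ forces the filtration to split, giving $E\cong\bigoplus_k L_k$. If instead some joint graded piece has rank $\geq 2$ with $\check\delta_i$-slope strictly exceeding the $\check\delta_i$-slope of $E$, then $E$ is $\delta_i$-unstable by definition, which is the other alternative of the statement.

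The main obstacle will be the descent step. On projective space Horrocks-type criteria suffice, but on a general rational homogeneous space the fibers of $q_1$ are themselves rational homogeneous varieties of positive dimension, so the vanishing has to be extracted from Borel--Weil--Bott on those fibers, uniformly in $i$. Making the bounds uniform across all special families is what forces the specific form $\nu(X)-2$ of the numerical constraint; once the descent is established, the combinatorial assembly of the filtrations into either a direct sum decomposition or a destabilising subsheaf is comparatively routine.
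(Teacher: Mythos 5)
Your architecture (relative Harder--Narasimhan filtrations over each $\mathcal{M}_i^{\delta_i^c}$, descent along $q_1$, then read off instability or splitting) is in the right spirit, but the two load-bearing steps do not work as described, and they are exactly where the paper has to do real work. The descent step first: you propose to descend the relative filtration along $q_1$ via a cohomological vanishing involving $T_{q_1}$ that is ``guaranteed once $\nu(X)$ is sufficiently large relative to $r$''. That is not how the obstruction behaves. The Descent Lemma requires $\mathrm{Hom}(T_{\mathcal{U}_i^{\delta_i^c}/X},\mathcal{H}om(\widetilde K,\widetilde Q))=0$; on a line $\widetilde L=q_2^{-1}(l)$ this group is controlled by degrees of the form $1+a_j^{(\delta_i)}-a_k^{(\delta_i)}$ (quotient minus sub), so it vanishes only when the relevant gap $a_t^{(\delta_i)}-a_{t+1}^{(\delta_i)}$ is at least $2$ (resp.\ $4$ at exposed short roots), and it is completely insensitive to $\nu(X)$ and to $r$. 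For a uniform bundle of splitting type $(0,\ldots,0,1)$ your vanishing fails and the filtration does not descend by this mechanism. What $r\le\nu(X)-2$ actually buys --- and what the paper's Proposition on non-constant splitting type uses --- is that the classifying map $\varphi:\mathrm{SPVMRT}_x^{(\delta_i)}\rightarrow\mathbb{G}_k(t-1,\mathbb{P}_k(E_x^{\vee}))$ attached to the subbundle $q_2^{\ast}q_{2\ast}q_1^{\ast}(E^{\vee})\subset q_1^{\ast}(E^{\vee})$ is constant for every $x$, because the Chow-ring lemmas of Section~3.1 exclude nonconstant morphisms from $\mathrm{SPVMRT}_x$ to the relevant Grassmannians in that rank range; constancy gives triviality of $\widetilde M,\widetilde N$ on $q_1$-fibers and hence descent with no hypothesis on the gaps. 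Your proposal never invokes this Grassmannian-morphism input, so it cannot handle gap-one splitting types.

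Second, the splitting alternative is not established. Your final dichotomy (all joint graded pieces of rank one $\Rightarrow$ splits; some piece of rank $\ge 2$ of larger slope $\Rightarrow$ unstable) omits the main case: when every special family gives a \emph{constant} splitting type, all relative HN filtrations are trivial, the unique ``joint graded piece'' is $E$ itself of rank $r\ge 2$, and neither branch applies. This is precisely where the conclusion ``$E$ splits'' must be proved; in the paper it is a separate proposition, proved by twisting so that $E$ is trivial on every special line and then inducting on the marked Dynkin diagram via the projections to flag manifolds with fewer marked nodes, using \cite{A-W} (Proposition 1.2) on the fibers. You also do not get the rank-one case for free: splitting a filtration by line bundles of distinct degrees needs $H^1(X,L_k\otimes L_j^{-1})=0$, which is not automatic on $G/P$. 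The paper sidesteps both issues with a cleaner dichotomy: if any single family has non-constant splitting type, the descended extension $0\rightarrow M\rightarrow E\rightarrow N\rightarrow 0$ already exhibits $\delta_i$-instability and one stops; otherwise all types are constant and the inductive triviality argument applies. The common refinement of the $m$ filtrations is therefore both unnecessary and, as it stands, insufficient.
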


In particular, if the Picard number of $X$ is one, we improve an interesting theorem of Mu\~{n}oz-Occhetta-Sol\'{a} Conde (\cite{M-O-C2} Theorem 3.1) a little bit (see Table \ref{bound}).

\begin{thm}
Suppose that $E$ is a uniform $r$-bundle on a generalized Grassmann $\mathcal{G}$. If $r\le \varsigma(\mathcal{G})$, then $E$ splits as a direct sum of line bundles.
\end{thm}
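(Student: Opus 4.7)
The plan is to derive Theorem 1.2 directly from Theorem 1.1 by specializing to the case of Picard number one. A generalized Grassmannian $\mathcal{G} = G/P_I$ has $|I|=1$, so in the notation of the previous theorem one has $m=1$ and a unique marked node $\delta$. The product decomposition $G_1/P_{I_1}\times\cdots\times G_m/P_{I_m}$ then collapses to $\mathcal{G}$ itself, there is exactly one special family of lines $\mathcal{M}^{\delta^c}$, and the hypothesis "poly-uniform with respect to all special families" is literally the classical notion of uniformity for $E$.

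The first step is to invoke Theorem 1.1 to obtain a dichotomy: either $E$ splits as a direct sum of line bundles (in which case we are done) or $E$ is $\delta$-unstable. In the $\delta$-unstable alternative, the definition in Section~3 supplies a proper saturated subsheaf $F\subset E$ whose slope exceeds that of $E$ when tested against the class $\check\delta$; since $\mathrm{Pic}(\mathcal{G})=\mathbb{Z}$, this is slope-instability in the ordinary sense. Standard arguments for uniform bundles—restricting to a general line of the unique special family and using the fact that the splitting type of $E$ is constant—imply that $F$ is in fact a subbundle and that both $F$ and $E/F$ are again uniform with strictly smaller rank.

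The second step is an induction on $r$: the refined hypothesis $r\le\varsigma(\mathcal{G})$ (which, as emphasized in the abstract, is sharper than the bound of Theorem 1.1 because it is read off the Chow ring of $\mathcal{G}$) is designed so that $\mathrm{rk}(F)$ and $\mathrm{rk}(E/F)$ both satisfy the same numerical inequality. By the inductive hypothesis, $F$ and $E/F$ each split as a sum of line bundles $\bigoplus_i\mathcal{O}_{\mathcal{G}}(a_i)$ and $\bigoplus_j\mathcal{O}_{\mathcal{G}}(b_j)$. To conclude that the extension $0\to F\to E\to E/F\to 0$ splits, one invokes Bott vanishing on the rational homogeneous space $\mathcal{G}$ to kill the relevant $\mathrm{Ext}^1(\mathcal{O}_{\mathcal{G}}(b_j),\mathcal{O}_{\mathcal{G}}(a_i))=H^1(\mathcal{G},\mathcal{O}_{\mathcal{G}}(a_i-b_j))$; the destabilizing inequality forces $a_i-b_j$ into the range where this cohomology vanishes.

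The main obstacle will be locating the precise numerical value of $\varsigma(\mathcal{G})$ that makes both the inductive step and the Ext-vanishing simultaneously available across all families of generalized Grassmannians (classical types $A_n, B_n, C_n, D_n$ and the exceptional cases in $E_6$ and $E_7$). Concretely, one must verify case by case, using the Chow-ring presentation of $\mathcal{G}$, that $\varsigma(\mathcal{G})$ bounds the Chern-class constraints that arise from the maximal destabilizing subsheaf on a general line; this is the Chow-ring improvement over \cite{M-O-C2} Theorem~3.1 advertised in the introduction, and it is the step where the argument departs from the purely Lie-theoretic bound $\nu(X)-2$ of Theorem 1.1.
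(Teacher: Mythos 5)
Your proposal has a genuine gap, and it is structural rather than cosmetic. You propose to deduce this theorem from Theorem 1.1, but Theorem 1.1 only applies under the hypothesis $r\le\nu(X)-2$, and for a generalized Grassmannian $\mathcal{G}$ one has $\nu(\mathcal{G})=\varsigma(\mathcal{G})$ (the sub-diagram $\mathcal{D}^{\delta}$ is the whole diagram when only one node is marked), so Theorem 1.1 says nothing for $r=\varsigma(\mathcal{G})$ or $r=\varsigma(\mathcal{G})-1$ --- precisely the two ranks that make the present statement an improvement over the bound $\nu(X)-2$. Moreover, the logical dependency runs the other way in the paper: the instability dichotomy of Theorem 1.1 is obtained (Proposition \ref{notsame}) from the fact that every morphism $\mathcal{M}_x\to G(t,\varsigma(\mathcal{G})-2)$ is constant (Remark \ref{rem}), and that fact is a byproduct of the analysis that proves the present theorem. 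So deriving Theorem 1.2 from Theorem 1.1 is circular, and in any case numerically insufficient.

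The second, related, gap is that the step you label ``standard arguments for uniform bundles'' --- that the destabilizing subsheaf is a uniform subbundle whose quotient is again uniform --- is exactly where the mathematical content lies: producing the subbundle requires descending a subbundle of $q_1^{\ast}E$ from the universal family, which in turn requires showing that the classifying morphism $\mathcal{M}_x\to \mathbb{G}(t-1,\mathbb{P}(E_x^{\vee}))$ is constant for every $x$, and this is only available under a rank bound that must be justified. The paper's actual proof does not pass through instability at all: it invokes the criterion of Mu\~{n}oz--Occhetta--Sol\'{a} Conde (\cite{M-O-C2}, Theorem 3.1), namely that every uniform $r$-bundle with $r\le\varsigma$ splits provided all morphisms $\mathcal{M}_x\to G(t,\varsigma)$ with $1\le t\le[\varsigma/2]$ are constant, and then verifies this constancy case by case for each possible $\mathcal{M}_x$ (projective spaces and quadrics, $G(d,n)$, spinor varieties, $E_6/P_6$, $E_7/P_7$, $C_3/P_3$) by explicit Chow-ring computations; these computations are what produce the values of $\varsigma(\mathcal{G})$ in Table \ref{bound}. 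You defer exactly this verification to ``the main obstacle'' at the end, so the proposal contains the scaffolding but not the proof. (Your final appeal to $H^1(\mathcal{G},\mathcal{O}_{\mathcal{G}}(a_i-b_j))=0$ to split the extension is harmless once both pieces are known to be sums of line bundles --- the same vanishing is used in the proof of Corollary \ref{d} --- but it is the last and easiest step.)
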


By calculating the relative tangent bundles and using Descent Lemma, we can have explicit bounds for the generalized Grauert-M\"{u}lich-Barth theorem on rational homogeneous spaces.





\begin{thm}\label{longroot}
Fix $\delta_i\in I_i$ 
and assume that $\alpha_{\delta_i}$ is not an exposed short root. Let $E$ be a holomorphic $r$-bundle over $X$ of type $\underline{a}_E^{(\delta_i)}=(a_1^{(\delta_i)},\ldots,a_r^{(\delta_i)}), ~ a_1^{(\delta_i)}\geq\cdots\geq a_r^{(\delta_i)}$ with respect to $\mathcal{M}_i^{\delta_i^c}$. If for some $t<r$,
	 \[
	 a_t^{(\delta_i)}-a_{t+1}^{(\delta_i)}\geq
	 \left\{
	 \begin{array}{ll}
	 1, & and~N(\delta_i)~ \text{fits Case I}\\
	 2, & and ~N(\delta_i)~ \text{fits Case II},
	 \end{array}
	 \right.\]
	then there is a normal subsheaf $K\subset E$ of rank $t$ with the following properties: over the open set $V_E=q_1(q_2^{-1}(U_E^{(\delta_i)}))\subset X$, where $U_E^{(\delta_i)}$ is an open set in $\mathcal{M}^{\delta_i^c}$, the sheaf $K$ is a subbundle of $E$, which on the line $L\subset X$ given by $l\in U_E^{(\delta_i)}$ has the form
	$$K|_L\cong\oplus_{s=1}^{t}\mathcal{O}_L(a_s^{(\delta_i)}).$$
\end{thm}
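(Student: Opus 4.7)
The plan is to adapt the classical Grauert-M\"{u}lich-Barth strategy to the standard diagram relating $\mathcal{U}_i^{\delta_i^c}$, $X$, and $\mathcal{M}_i^{\delta_i^c}$: first build a candidate subsheaf on the universal family $\mathcal{U}_i^{\delta_i^c}$ from the relative Harder-Narasimhan filtration along the $\mathbb{P}^1$-fibres of $q_2$, and then descend it along $q_1$ using the gap hypothesis together with the explicit splitting of the relative cotangent bundle on lines.

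First I pull $E$ back via $q_1$. The restriction of $q_1^*E$ to any $q_2$-fibre $L$ is isomorphic to $\bigoplus_{s=1}^r \mathcal{O}_L(a_s^{(\delta_i)})$, so by standard upper semicontinuity (applied to $q_{2*}$ of suitable twists of $q_1^*E$) there is a dense open set $U_E^{(\delta_i)} \subset \mathcal{M}_i^{\delta_i^c}$ over which a Harder-Narasimhan step has constant rank. The hypothesis $a_t^{(\delta_i)} > a_{t+1}^{(\delta_i)}$ forces the filtration to break at position $t$, yielding a rank-$t$ subbundle $\widetilde{K} \subset q_1^*E$ on $q_2^{-1}(U_E^{(\delta_i)})$ with $\widetilde{K}|_L \cong \bigoplus_{s=1}^t \mathcal{O}_L(a_s^{(\delta_i)})$ on every fibre $L$ above $U_E^{(\delta_i)}$.

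Next I apply the Descent Lemma to push $\widetilde{K}$ down to $V_E = q_1(q_2^{-1}(U_E^{(\delta_i)})) \subset X$. Descent along $q_1$ is controlled by the second fundamental form
\[
\sigma_{\widetilde{K}} \colon \widetilde{K} \longrightarrow (q_1^*E/\widetilde{K}) \otimes \Omega^1_{q_1},
\]
where $\Omega^1_{q_1}$ is the relative cotangent bundle of $q_1$. In Case I one has $\mathcal{U}_i^{\delta_i^c} = X$ with $q_1 = \mathrm{id}$, so $\Omega^1_{q_1} = 0$ and $\sigma_{\widetilde{K}}$ vanishes automatically; the gap $\geq 1$ is needed only to produce the Harder-Narasimhan jump at $t$, exactly matching the statement. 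In Case II I verify $\sigma_{\widetilde{K}} = 0$ by restricting to each $q_2$-fibre $L$: the obstruction is controlled by $H^0\!\bigl(L, \mathcal{H}om(\widetilde{K}|_L, (q_1^*E/\widetilde{K})|_L) \otimes \Omega^1_{q_1}|_L\bigr)$, which splits into summands $H^0(L, \mathcal{O}_L(a_{s'}^{(\delta_i)} - a_s^{(\delta_i)}) \otimes \Omega^1_{q_1}|_L)$ for $s \leq t < s'$. Invoking the splitting-type computation of relative (co)tangent bundles on minimal rational curves carried out earlier in the paper, the assumption that $\alpha_{\delta_i}$ is not an exposed short root forces every summand of $\Omega^1_{q_1}|_L$ to have degree at most $1$. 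Combined with the gap $a_t^{(\delta_i)} - a_{t+1}^{(\delta_i)} \geq 2$, every contributing line bundle has strictly negative degree, killing the $H^0$.

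Once $\sigma_{\widetilde{K}}$ vanishes, the Descent Lemma produces the rank-$t$ subsheaf $K \subset E|_{V_E}$ with $K|_L \cong \bigoplus_{s=1}^t \mathcal{O}_L(a_s^{(\delta_i)})$ on every line $L$ parametrised by $U_E^{(\delta_i)}$; saturating $K$ in $E$ in codimension one then produces the required normal subsheaf on $X$. The main obstacle in executing this plan is the root-theoretic identification of the summands of $\Omega^1_{q_1}|_L$ and the precise role of the exposed-short-root exception: without this hypothesis the relative cotangent bundle can acquire summands of degree $\geq 2$, which would destroy the $H^0$-vanishing and hence the descent. Granting the explicit calculation, which is the technical core of the paper, the remainder of the argument runs in close parallel to the classical projective-space Grauert-M\"{u}lich-Barth theorem.
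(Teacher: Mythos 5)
Your proposal is correct and follows essentially the same route as the paper: the paper's proof of this theorem consists precisely of combining the computation $T_{\mathcal{U}_i^{\delta_i^c}/X}|_{\widetilde{L}}=\mathcal{O}_{\widetilde{L}}(-1)^{N}$ for non-exposed-short roots (Proposition \ref{tangent2}) with the Descent Lemma, running the classical Grauert--M\"ulich--Barth construction of the relative Harder--Narasimhan subbundle on the universal family verbatim. Your accounting of the two cases (trivial descent when $q_1=\mathrm{id}$ versus the $H^0$-vanishing forced by the gap $\geq 2$ against cotangent summands of degree $1$) matches the intended argument exactly.
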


\begin{cor}
With the same assumption as Theorem \ref{longroot}. For a $\delta_i$-semistable $r$-bundle $E$ over $X$ of type $\underline{a}_E^{(\delta_i)}=(a_1^{(\delta_i)},\ldots,a_r^{(\delta_i)}), a_1^{(\delta_i)}\geq\cdots\geq a_r^{(\delta_i)}$ with respect to $\mathcal{M}_i^{\delta_i^c}$, we have \[
a_s^{(\delta_i)}-a_{s+1}^{(\delta_i)}\leq 1~~ \text{for all}~ s=1,\ldots, r-1.
\]
In particular, if $N(\delta_i)$ fits Case I, then we have $a_s^{(\delta_i)}$'s are constant for all $1\leq s\leq r$.
\end{cor}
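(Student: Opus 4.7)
The plan is to prove the corollary by contradiction using Theorem \ref{longroot}. Suppose some adjacent gap $a_s^{(\delta_i)}-a_{s+1}^{(\delta_i)}$ is at least $1$ in Case I, or at least $2$ in Case II, for some $1\le s\le r-1$. Then Theorem \ref{longroot} supplies a normal subsheaf $K\subset E$ of rank $s$ whose restriction to a general line $L$ of class $\check{\delta}_i$ is $\bigoplus_{j=1}^{s}\mathcal{O}_L(a_j^{(\delta_i)})$. I plan to contradict the $\delta_i$-semistability of $E$ by showing that $\mu_{\delta_i}(K)>\mu_{\delta_i}(E)$.

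For the slope comparison, the $\delta_i$-degree of any coherent subsheaf $F\subseteq E$ is computed as $c_1(F)\cdot[\check{\delta}_i]$, which on a general line of class $\check{\delta}_i$ recovers the sum of the splitting type. Thus
\[
\mu_{\delta_i}(K)=\frac{1}{s}\sum_{j=1}^{s}a_j^{(\delta_i)},\qquad \mu_{\delta_i}(E)=\frac{1}{r}\sum_{j=1}^{r}a_j^{(\delta_i)}.
\]
A short averaging argument using $a_j^{(\delta_i)}\ge a_s^{(\delta_i)}$ for $j\le s$ and $a_j^{(\delta_i)}\le a_{s+1}^{(\delta_i)}$ for $j\ge s+1$, together with the strict drop $a_s^{(\delta_i)}>a_{s+1}^{(\delta_i)}$, immediately yields $\mu_{\delta_i}(K)>\mu_{\delta_i}(E)$. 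This contradicts $\delta_i$-semistability, so no such gap can occur, giving $a_s^{(\delta_i)}-a_{s+1}^{(\delta_i)}\le 1$ for every $s$ in Case II.

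In Case I, where $N(\delta_i)\subseteq I_i$, Theorem \ref{longroot} applies to any gap $\ge 1$, so the same contradiction promotes the conclusion to $a_s^{(\delta_i)}=a_{s+1}^{(\delta_i)}$ for every $s$, i.e., a constant splitting type. The only point that requires care, rather than a serious obstacle, is the identification of the slope $\mu_{\delta_i}$ from the definition of $\delta_i$-semistability with the pairing $c_1(\cdot)\cdot[\check{\delta}_i]$ used above, and the verification that the rank-$s$ normal subsheaf produced by Theorem \ref{longroot} is admissible as a test object for stability; both should follow directly from the conventions set up in Section 3.
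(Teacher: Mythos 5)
Your proof is correct and follows essentially the same route as the paper: the paper's own argument (given explicitly for the generalized Grassmannian version of this corollary) is exactly the contradiction you describe — a gap of size $\geq 2$ (resp.\ $\geq 1$ in Case I) would produce, via Theorem \ref{longroot}, a normal subsheaf $K$ with $\mu^{(\delta_i)}(K)>\mu^{(\delta_i)}(E)$, violating $\delta_i$-semistability. Your averaging step and the identification of the slope with the sum of the splitting type on a general line of class $\check{\delta}_i$ match the paper's Definition of $\mu^{(\delta_i)}$ exactly.
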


\begin{thm}\label{shortroot}

Fix $\delta_i\in I_i$ and assume that $\alpha_{\delta_i}$ is an exposed short root. Let $E$ be a holomorphic $r$-bundle over $X$ of type $\underline{a}_E^{(\delta_i)}=(a_1^{(\delta_i)},\ldots,a_r^{(\delta_i)}), ~ a_1^{(\delta_i)}\geq\cdots\geq a_r^{(\delta_i)}$ with respect to $\mathcal{M}_i^{\delta_i^c}$. If for some $t<r$,
	\[
	a_t^{(\delta_i)}-a_{t+1}^{(\delta_i)}\geq
	\left\{
	\begin{array}{ll}
	1, & and~N(\delta_i)~ \text{fits Case I}\\
	4, & and ~N(\delta_i)~ \text{fits Case II},
	\end{array}
	\right.\]
	then there is a normal subsheaf $K\subset E$ of rank $t$ with the following properties: over the open set $V_E=q_1(q_2^{-1}(U_E^{(\delta_i)}))\subset X$, where $U_E^{(\delta_i)}$ is an open set in $\mathcal{M}^{\delta_i^c}$, the sheaf $K$ is a subbundle of $E$, which on the line $L\subset X$ given by $l\in U_E^{(\delta_i)}$ has the form
	$$K|_L\cong\oplus_{s=1}^{t}\mathcal{O}_L(a_s^{(\delta_i)}).$$
\end{thm}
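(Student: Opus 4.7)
The plan is to follow the proof of Theorem \ref{longroot} almost verbatim, and to locate the single Lie-theoretic step in which the exposed-short-root hypothesis enters. First, on the open set $U_E^{(\delta_i)}\subset\mathcal{M}_i^{\delta_i^c}$ where the splitting type of $E|_L$ is constant, the gap $a_t^{(\delta_i)}>a_{t+1}^{(\delta_i)}$ together with the relative Harder--Narasimhan construction on the $\mathbb{P}^1$-bundle $q_2$ produces a rank-$t$ subbundle $\mathcal{K}\subset q_2^*E$ on $q_2^{-1}(U_E^{(\delta_i)})$ whose restriction to each line $L$ is $\bigoplus_{s\leq t}\mathcal{O}_L(a_s^{(\delta_i)})$. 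In Case I the universal family equals $X$, so $\mathcal{K}$ is already the desired sheaf $K$ and only the gap $\geq 1$ is required; this step is identical to the one in Theorem \ref{longroot}.

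In Case II I would descend $\mathcal{K}$ along $q_1\colon\mathcal{U}_i^{\delta_i^c}\to X$ by means of the Descent Lemma. The descent obstruction sits in $\mathrm{Hom}(\mathcal{K},(q_2^*E/\mathcal{K})\otimes T_{q_1})$ restricted to the fibers of $q_1$; since those fibers are swept by the lines $L$ (the fibers of $q_2$ lying in $U_E^{(\delta_i)}$), the vanishing can be tested one line at a time. Filtering both $\mathcal{K}|_L$ and $(q_2^*E/\mathcal{K})|_L$ by their line-bundle summands reduces the question to the numerical inequality that $a_t^{(\delta_i)}-a_{t+1}^{(\delta_i)}$ strictly exceeds the largest degree of any summand of $T_{q_1}|_L$.

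Everything therefore hinges on the splitting type of $T_{q_1}|_L$. By the root-space decomposition, this splitting is indexed by the positive roots $\alpha$ of $\mathfrak{g}_i$ whose root spaces lie in $\mathfrak{p}_{I_i\cup N(\delta_i)}$ but not in $\mathfrak{p}_{I_i}$, and the corresponding summand on $L$ is a line bundle of degree $\langle\alpha,\check{\delta}_i\rangle$. When $\alpha_{\delta_i}$ is not an exposed short root the only values that arise are $0$ and $1$, giving the bound $2$ of Theorem \ref{longroot}; when $\alpha_{\delta_i}$ is an exposed short root, i.e.\ $\delta_i$ is incident to a multiple edge in $\mathcal{D}_i$, the Cartan integers force the values $2$ and even $3$ to appear, the latter precisely in the $G_2$-type configuration. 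Hence the maximum degree in $T_{q_1}|_L$ is $3$, and the descent condition becomes $a_t^{(\delta_i)}-a_{t+1}^{(\delta_i)}\geq 4$.

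The main obstacle, and the only genuinely new computation beyond Theorem \ref{longroot}, is the root-theoretic identification of the maximal pairing $\langle\alpha,\check{\delta}_i\rangle$ in the short-root case. I would handle it uniformly by enumerating the roots of $\mathfrak{g}_i$ lying in $\mathfrak{p}_{I_i\cup N(\delta_i)}\setminus\mathfrak{p}_{I_i}$ and reading off their coefficients against $\check{\delta}_i$ from the Cartan matrix of $\mathcal{D}_i$, then running a short case check on the doubly- and triply-laced diagrams $B_n$, $C_n$, $F_4$, $G_2$ to confirm that the sharp bound is indeed $3$; the remainder of the argument is then a direct transcription of the proof of Theorem \ref{longroot} with the improved numerical input.
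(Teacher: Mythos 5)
Your proposal follows essentially the same route as the paper: the paper also builds the rank-$t$ subbundle of $q_1^{\ast}E$ on $\mathcal{U}_i^{\delta_i^c}$ from the relative Harder--Narasimhan filtration over $\mathcal{M}_i^{\delta_i^c}$, descends it via Lemma \ref{Descent}, and feeds in Proposition \ref{tangent4}, which records (by the classical-type computations and the tag/weight computations for the exceptional types) that $T_{\mathcal{U}_i^{\delta_i^c}/X}|_{\widetilde{L}}$ has summands of degree $-1$, $-2$ or $-3$, the $-3$ occurring exactly in the $G_2$ configuration, whence the threshold $4$. The only blemishes are notational --- the pullback of $E$ to the universal family is $q_1^{\ast}E$ rather than $q_2^{\ast}E$, and the descent obstruction is $\mathrm{Hom}\bigl(T_{q_1},\mathcal{H}om(\widetilde{K},\widetilde{Q})\bigr)$, i.e.\ it involves the relative \emph{cotangent} bundle as a tensor factor, not $T_{q_1}$ --- but the numerical criterion you actually apply (the gap must exceed the largest degree $3$ of a summand of $\Omega_{q_1}|_{\widetilde{L}}$) is the correct one and is exactly what the paper uses.
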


\begin{cor}
With the same assumption as Theorem \ref{shortroot}. For a $\delta_i$-semistable $r$-bundle $E$ over $X$ of type $\underline{a}_E^{(\delta_i)}=(a_1^{(\delta_i)},\ldots,a_r^{(\delta_i)}), a_1^{(\delta_i)}\geq\cdots\geq a_r^{(\delta_i)}$ with respect to $\mathcal{M}_i^{\delta_i^c}$, we have \[
	a_s^{(\delta_i)}-a_{s+1}^{(\delta_i)}\leq 3~~ \text{for all}~ s=1,\ldots, r-1.
	\]
	In particular, if $N(\delta_i)$ fits Case I, then we have $a_s^{(\delta_i)}$'s are constant for all $1\leq s\leq r$.
\end{cor}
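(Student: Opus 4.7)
The plan is a short slope argument by contradiction, using Theorem \ref{shortroot} as a black box. Suppose for contradiction that for some $s \in \{1,\ldots,r-1\}$ one has $a_s^{(\delta_i)} - a_{s+1}^{(\delta_i)} \geq 4$ when $N(\delta_i)$ fits Case II, or $a_s^{(\delta_i)} - a_{s+1}^{(\delta_i)} \geq 1$ when $N(\delta_i)$ fits Case I. Then Theorem \ref{shortroot} produces a normal subsheaf $K \subset E$ of rank $s$, a subbundle on the open set $V_E \subset X$, whose restriction to any line $L$ given by $l \in U_E^{(\delta_i)}$ splits as $K|_L \cong \bigoplus_{t=1}^{s} \mathcal{O}_L(a_t^{(\delta_i)})$.

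Next I would compare slopes with respect to the line class $\check{\delta}_i$ that enters the definition of $\delta_i$-(semi)stability. Since the $\check{\delta}_i$-degree of a torsion-free subsheaf of $E$ can be read off from its restriction to a general line of the family $\mathcal{M}_i^{\delta_i^c}$, we obtain
\[
\mu_{\check{\delta}_i}(K) = \frac{1}{s}\sum_{t=1}^{s} a_t^{(\delta_i)}, \qquad \mu_{\check{\delta}_i}(E) = \frac{1}{r}\sum_{t=1}^{r} a_t^{(\delta_i)}.
\]
Because the $a_t^{(\delta_i)}$ are weakly decreasing and strictly drop between index $s$ and $s+1$ under our hypothesis, the elementary inequality on partial averages gives $\mu_{\check{\delta}_i}(K) > \mu_{\check{\delta}_i}(E)$. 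This contradicts the $\delta_i$-semistability of $E$. Contrapositively, whenever $E$ is $\delta_i$-semistable the consecutive gaps must satisfy $a_s^{(\delta_i)} - a_{s+1}^{(\delta_i)} \leq 3$, giving the first assertion.

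For the \textit{In particular} statement, in Case I the threshold in Theorem \ref{shortroot} is only $1$, so the same argument yields $a_s^{(\delta_i)} - a_{s+1}^{(\delta_i)} \leq 0$ for every $s$, which combined with the monotonicity assumption forces all $a_s^{(\delta_i)}$ to be equal. The only mild subtlety in the plan is confirming that the slope $\mu_{\check{\delta}_i}$ from the definition of $\delta_i$-semistability in Section~3 is indeed computed by the sum of the splitting-type exponents on a general line of class $\check{\delta}_i$; once this compatibility is checked, the corollary is an immediate slope-theoretic consequence of Theorem \ref{shortroot}, and there is no further obstacle.
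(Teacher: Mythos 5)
Your proposal is correct and follows essentially the same route as the paper: the authors prove the analogous corollaries (e.g.\ the semistable corollary in the long-root/generalized Grassmannian setting) by exactly this contradiction argument, applying the gap theorem to produce the normal subsheaf $K$ with $K|_L\cong\oplus_{t=1}^{s}\mathcal{O}_L(a_t^{(\delta_i)})$ on a general line and concluding $\mu^{(\delta_i)}(K)>\mu^{(\delta_i)}(E)$, contradicting $\delta_i$-semistability. Your slope computation and the Case I refinement (threshold $1$ forcing all $a_s^{(\delta_i)}$ equal) match the intended argument, and the compatibility of $\mu^{(\delta_i)}$ with the splitting type on a general line is exactly how the paper defines $c_1^{(\delta_i)}$.
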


If $X=G/B$, where $G$ is a semi-simple Lie group and $B$ is a Borel subgroup of $G$, then we have the following result.
\begin{cor}
If an $r$-bundle $E$ on X is $\delta_i$-semistable for all $i$ and $\delta_i~(1\le i\le m)$, then $E$ splits as a direct sum of line bundles.
\end{cor}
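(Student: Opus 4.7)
The plan is to combine the two corollaries stated above with an iterated descent along elementary $\mathbb{P}^1$-fibrations. Observe first that $X=G/B$ corresponds to $I_i=D_i$ for every $i$, so $N(\delta_i)\subseteq I_i$ holds automatically for every simple node $\delta_i$; in other words, every $\delta_i$ fits Case I. Applying the corollary to Theorem~\ref{longroot} when $\alpha_{\delta_i}$ is not an exposed short root, and the corollary to Theorem~\ref{shortroot} when it is, both conclude under $\delta_i$-semistability in Case I that the splitting type $\underline{a}_E^{(\delta_i)}$ is scalar. Hence for each $\delta_i$ there is an integer $a^{(\delta_i)}$ with $E|_L\cong \mathcal{O}_L(a^{(\delta_i)})^{\oplus r}$ for every line $L$ in the special family $\mathcal{M}_i^{\delta_i^c}$.

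The second step is to reduce to the trivially split case by twisting. Since $X=G/B$ has $\mathrm{Pic}(X)$ freely generated by the fundamental-weight line bundles and the pairing of $\mathrm{Pic}(X)$ with the Schubert-line classes $\{\check{\delta}_i\}$ is unimodular, there is a unique line bundle $\mathcal{L}$ on $X$ with $\deg(\mathcal{L}|_L)=a^{(\delta_i)}$ for every line $L$ of class $\check{\delta}_i$. Set $E':=E\otimes \mathcal{L}^{-1}$; then $E'|_L\cong \mathcal{O}_L^{\oplus r}$ on every Schubert line in every special family.

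Finally, apply the Descent Lemma iteratively to the elementary contractions $q_i^{\delta_i}\colon X\to \mathcal{M}_i^{\delta_i^c}$, each of which is an honest $\mathbb{P}^1$-bundle precisely because we are in Case I. Fiberwise triviality of $E'$ combined with cohomology-and-base-change yields $E'\cong (q_i^{\delta_i})^{*}(q_i^{\delta_i})_{*}E'$, and iterating along a chain of elementary fibrations that collapses $X$ to a point produces $E'\cong \mathcal{O}_X^{\oplus r}$, so that $E\cong \mathcal{L}^{\oplus r}$ is in particular a direct sum of line bundles. The main technical obstacle lies in the iteration: after the first descent, the base $X_1$ is no longer of Borel type, and one must verify that the pushforward $(q_i^{\delta_i})_{*}E'$ remains fiberwise trivial for subsequent contractions. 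This reduces to the observation that any Schubert line of class $\check{\delta}_j$ with $\delta_j$ not yet contracted maps isomorphically onto its image in $X_1$, so the initial triviality is transported intact; alternatively, one can invoke $G$-homogeneity of $E'$---forced by fiberwise triviality on a covering family of rational curves in every class---to conclude $E'\cong \mathcal{O}_X^{\oplus r}$ directly, bypassing the iteration altogether.
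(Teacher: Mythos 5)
Your opening step is exactly the paper's: for $X=G/B$ every $\delta_i$ fits Case I, so Corollaries \ref{longrc} and \ref{shortrc} force each splitting type to be constant, and after a twist one is reduced to showing that a bundle trivial on every line of every special family is trivial. The paper disposes of that last statement by citing Proposition \ref{polyuni}; your proposal tries to reprove it by descent, and this is where there is a genuine gap. The chain of contractions $G/B=X_0\to X_1\to\cdots\to X_N=\mathrm{pt}$ with $X_k=G/P_{D\setminus\{\delta_1,\dots,\delta_k\}}$ is \emph{not} a chain of $\mathbb{P}^1$-bundles: only the first step is one. The fibre of $X_k\to X_{k+1}$ is the generalized Grassmannian of the subdiagram on $\{\delta_1,\dots,\delta_{k+1}\}$ marked at $\delta_{k+1}$, which is $\mathbb{P}^1$ only when $\delta_{k+1}$ is adjacent to none of the previously contracted nodes; for a connected diagram of rank at least two no ordering of the nodes achieves this all the way down (already for $G=SL_3$ the second contraction $G/P_2\to\mathrm{pt}$ has fibre $\mathbb{P}^2$, and for $A_n$ the fibres grow into projective spaces of increasing dimension).

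Your proposed remedy, transporting triviality along the lines of the not-yet-contracted classes, does show that the pushforward is trivial on all lines of $X_1$, but that is strictly weaker than what the next descent step needs, namely triviality on the (now higher-dimensional) fibres of $X_1\to X_2$. Bridging that gap is exactly the content of the splitting criterion on generalized Grassmannians --- a bundle trivial on all lines of the minimal family is trivial (Proposition 1.2 of \cite{A-W}, already nontrivial for $\mathbb{P}^2$) --- which is the engine of the paper's Proposition \ref{polyuni} and which your argument never supplies. The fallback appeal to $G$-homogeneity of $E'$ is likewise unjustified: triviality on covering families of rational curves does not force homogeneity, and asserting that it does essentially begs the question being proved. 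So the strategy is the right one and coincides with the paper's, but the iteration must be run as in Proposition \ref{polyuni}, with the Andreatta--Wi\'sniewski criterion applied to each fibre, rather than as a tower of $\mathbb{P}^1$-bundles.
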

\section{Preliminaries}
Throughout this paper, all algebraic varieties and morphisms will be defined over complex number field $\mathbb{C}$.
\subsection{Semisimple Lie groups and algebras}
In mathematics, Lie group–Lie algebra correspondence allows one to study Lie groups, which are geometric objects, in terms of Lie algebras, which are linear objects. Let $G$ be a semi-simple Lie group. Assume $V$ to be a nonzero finite dimensional complex vector space, fix a maximal torus $H\subset G$, and let $\varphi: G\rightarrow GL(V)$ be a representation of $G$. It is well known that $V$ decomposes into a direct sum of simultaneous eigenspaces
\[V=\bigoplus V_\lambda,\]
where the direct sum run over $\lambda$ in the \emph{character group} of $H$, which is the set of all holomorphic homomorphisms $\lambda$ from $H$ to $\mathbb{C}^*$, and
\[V_\lambda=\{v\in V|\varphi(h)=\lambda(h)v,~\text{for all}~h\in H\}.\]
Since $V$ is a finite dimensional vector space, we have $V_\lambda=0$ for all but finitely many values of $\lambda$. Those values of $\lambda$ for which $V_\lambda\neq 0$ are called the \emph{weights} of $V$, and $V_\lambda$ is called the \emph{weight space}.

In the Lie algebra side,  let $\mathfrak{g}$ be the associated semi-simple Lie algebra of $G$. The maximal torus corresponds to $\mathfrak{h}\subset \mathfrak{g}$ which is an abelian subalgebra of maximal dimension, i.e. Cartan subalgebra. A holomorphic representation $\varphi: G\rightarrow GL(V)$ of a complex Lie group $G$ gives rise to a complex linear representation $\varphi_{\mathfrak{g}}: \mathfrak{g}\rightarrow \mathfrak{gl}(V)$ of the Lie algebra $\mathfrak{g}$ of $G$. Similarly, every finite dimensional representation of $\mathfrak{g}$ admits a decomposition
\[V=\bigoplus_{\lambda\in \{\lambda\in\mathfrak{h}^\vee|V_\lambda\neq 0\}} V_\lambda,\]
where \[V_\lambda=\{v\in V|[h,v]=\lambda(h)v,~\text{for all}~h\in\mathfrak{h}\}.\]
Those $\lambda$ are still called the \emph{weights} of $V$, and $V_\lambda$ is called the \emph{weight space}.
If we apply the above decomposition to $V=\mathfrak{g}$ and $\varphi_\mathfrak{g}$ the adjoint representation, we have \emph{Cartan decomposition} of $\mathfrak{g}$:
\[\mathfrak{g}=\mathfrak{h}\oplus \bigoplus_{\alpha\in \mathfrak{h}^\vee\backslash \{0\}}\mathfrak{g}_\alpha,\]
where \[\mathfrak{g}_\alpha:=\{g\in \mathfrak{g}|[h,g]=\alpha(h)g,~\text{for all}~h\in\mathfrak{h}\}.\]
The elements $\alpha\in \mathfrak{h}^\vee\backslash \{0\}$ for which $\mathfrak{g}_\alpha\neq 0$ are called roots of $\mathfrak{g}$, and the set of these elements will be denoted by $\Phi$ and be called \emph{root system}. For every $\alpha\in \Phi$, $\mathfrak{g}_\alpha$ is called \emph{root space} which is one dimensional.

Fix a linear functional
\[f: \text{span}_{\mathbb{R}}\Phi\rightarrow \mathbb{R}\] whose kernel does not intersect $\Phi$. Let
\[\Phi^+:=\{\alpha\in \Phi| f(\alpha)>0\} ~\text{and}~ \Phi^-:=\{\alpha\in \Phi| f(\alpha)<0\}.\]
Then $\Phi^+$ is called \emph{positive system of roots} and $\Phi^-$ is called \emph{negative system of roots}. Given such a positive system $\Phi^+$, we define the \emph{fundamental system} $\Pi\subset \Phi^+$ as follows: $\alpha\in \Pi$ if and only if $\alpha\in \Phi^+$ and $\alpha$ cannot be expressed as the sum of two elements of $\Phi^+$. A non-zero representation $V$ of $\mathfrak{g}$ is called a \emph{highest weight representation} if it is generated by a vector $v\in V_\lambda$ such that $gv=0$ for all $g\in \bigoplus_{\alpha\in \Phi^+} \mathfrak{g}_\alpha$. In this case, $v$ is called the \emph{highest weight vector}, and $\lambda$ is the highest weight of $V$.

For each $\alpha\in \Phi$ there is a unique element $h_\alpha\in \mathfrak{h}$ such that
\[\alpha(h)=<h_\alpha, h>~\text{for all}~ h\in \mathfrak{h}.\]
The vectors $h_\alpha$ for $\alpha\in\Phi$ span $\mathfrak{h}$. We denote by $\mathfrak{h}_{\mathbb{R}}$ the set of all elements of form $\sum_{i=1}^l a_ih_\alpha$ for $a_i\in \mathbb{R}$.

The \emph{Killing form} $<\alpha, \beta>:=tr(\text{ad}_\alpha\circ \text{ad}_\beta)$ defines a nondegenerated bilinear form on $\mathfrak{h}$, where $\alpha, \beta\in \mathfrak{g}$ and $\text{ad}$ is the adjoint representation. It can be shown that $\mathfrak{h}_{\mathbb{R}}^\vee$ is a Euclidean space with respect to $<, >$. Set $n:=dim_{\mathbb{C}}(\mathfrak{h})$ and $D:=\{1,2,\ldots,n\}$. We identify $D$ with the set of fundamental roots with respect to a choice of maximal torus $T$ and fixed Borel subgroup $B$. It is known that every fundamental system $\Pi\subset \Phi$ can form a basis of $\mathfrak{h}_{\mathbb{R}}^\vee$. Let $\Pi=\{\alpha_1, \ldots , \alpha_n\}$ be a fundamental system. Then we define $A_{ij}$ by
\[A_{ij}=2\frac{<\alpha_i,\alpha_j>}{<\alpha_i,\alpha_i>}\in \mathbb{Z},\quad i,j=1,\ldots n.\]
The matrix $A=(A_{ij})$ is called the \emph{Cartan matrix of $\mathfrak{g}$}.

The \emph{Dynkin diagram of $G$}, which we denoted by $\mathcal{D}:=\mathcal{D}(G)$, is determined by the Cartan matrix. It consists of a graph whose set of nodes is $D$ and where the nodes $i$ and $j$ are joined by $A_{ij}A_{ji}$ edges. When two nodes $i$ and $j$ are joined by a double or triple edge, we add to it an arrow pointing to $i$ if $|A_{ij}|>|A_{ji}|$. We call $\alpha_i$ a short root of $\mathcal{D}$ and $\alpha_j$ a non-short (or long) root of $\mathcal{D}$. (Sometimes, for the sake of narrative convenience, we freely interchange the terminology "node" and "root".) One may prove that there is a one to one correspondence between isomorphism classes of semisimple Lie algebras and Dynkin diagrams of reduced root systems. Moreover, every reduced root system is a disjoint union of mutually orthogonal irreducible root subsystems, each of them corresponding to one of the connected finite Dynkin diagrams $A_n$, $B_n$, $C_n$, $D_n$ $(n\in \mathbb{Z}_{>0})$, $E_6$, $E_7$, $E_8$, $F_4$, $G_2$:

\setlength{\unitlength}{0.4mm}
\begin{center}
\begin{picture}(280,0)(0,120)
\put(10,100){\circle{4}} \put(30,100){\circle{4}}
\put(60,100){\circle{4}} \put(80,100){\circle{4}}
\put(12,100){\line(1,0){16}} \put(40,100){\circle*{2}}
\put(45,100){\circle*{2}} \put(50,100){\circle*{2}}
 \put(62,100){\line(1,0){16}}
 \put(100,100){\circle{4}}
 \put(82,100){\line(1,0){16}}
\put(-10,100){\makebox(0,0)[cc]{$A_n:$}}
\put(7,110){$_1$}
\put(27,110){$_2$}
\put(51,110){$_{n-2}$}
\put(71,110){$_{n-1}$}
\put(97,110){$_{n}$}

\put(210,100){\circle{4}} \put(230,100){\circle{4}}
\put(260,100){\circle{4}} \put(280,100){\circle{4}}
\put(212,100){\line(1,0){16}} \put(240,100){\circle*{2}}
\put(245,100){\circle*{2}} \put(250,100){\circle*{2}}
 \put(262,100){\line(1,0){16}}
 \put(300,100){\circle{4}}
 \put(281,102){\line(1,0){18}}
 \put(281,98){\line(1,0){18}}
 \put(285,103){\line(3,-1){9}}
 \put(285,97){\line(3,1){9}}
\put(190,100){\makebox(0,0)[cc]{$B_n:$}}
\put(207,110){$_1$}
\put(227,110){$_2$}
\put(251,110){$_{n-2}$}
\put(271,110){$_{n-1}$}
\put(297,110){$_{n}$}
 \end{picture}
\end{center}
\vspace{.3cm}

\begin{center}
\begin{picture}(280,20)(0,120)
\put(10,100){\circle{4}} \put(12,100){\line(1,0){16}}
\put(30,100){\circle{4}} \put(40,100){\circle*{2}}
\put(45,100){\circle*{2}} \put(50,100){\circle*{2}}
 \put(60,100){\circle{4}}
\put(62,100){\line(1,0){16}} \put(80,100){\circle{4}}
\put(82,100){\line(3,1){16}} \put(100,94){\circle{4}} \put(82,100){\line(3,-1){16}} \put(100,106){\circle{4}}
\put(-10,100){\makebox(0,0)[cc]{$D_n:$}}
\put(7,110){$_1$}
\put(27,110){$_2$}
\put(51,110){$_{n-3}$}
\put(71,110){$_{n-2}$}
\put(105,106){$_{n-1}$}
\put(105,94){$_{n}$}

\put(210,100){\circle{4}} \put(230,100){\circle{4}}
\put(260,100){\circle{4}} \put(280,100){\circle{4}}
\put(212,100){\line(1,0){16}} \put(240,100){\circle*{2}}
\put(245,100){\circle*{2}} \put(250,100){\circle*{2}}
 \put(262,100){\line(1,0){16}}
 \put(300,100){\circle{4}}
 \put(281,102){\line(1,0){18}}
 \put(281,98){\line(1,0){18}}
 \put(285,100){\line(3,-1){9}}
 \put(285,100){\line(3,1){9}}
\put(190,100){\makebox(0,0)[cc]{$C_n:$}}
\put(207,110){$_1$}
\put(227,110){$_2$}
\put(251,110){$_{n-2}$}
\put(271,110){$_{n-1}$}
\put(297,110){$_{n}$}
 \end{picture}
\end{center}
\vspace{.3cm}

\begin{center}
\begin{picture}(280,20)(0,120)
\put(10,100){\circle{4}} \put(12,100){\line(1,0){16}}
\put(30,100){\circle{4}} \put(32,100){\line(1,0){16}}
\put(50,100){\circle{4}} \put(52,100){\line(1,0){16}}
\put(70,100){\circle{4}} \put(72,100){\line(1,0){16}}
\put(90,100){\circle{4}} \put(50,102){\line(0,1){11}}
\put(50,115){\circle{4}} \put(-10,100){\makebox(0,0)[cc]{$E_6:$}}
\put(7,90){$_1$}
\put(27,90){$_3$}
\put(47,90){$_4$}
\put(67,90){$_5$}
\put(88,90){$_6$}
\put(55,115){$_2$}

\put(210,100){\circle{4}} \put(212,100){\line(1,0){16}}
\put(230,100){\circle{4}} \put(231,102){\line(1,0){18}} \put(231,98){\line(1,0){18}}
\put(250,100){\circle{4}} \put(252,100){\line(1,0){16}}
\put(270,100){\circle{4}}
 \put(190,100){\makebox(0,0)[cc]{$F_4:$}}
 \put(235,103){\line(3,-1){9}}
 \put(235,97){\line(3,1){9}}
\put(207,90){$_1$}
\put(227,90){$_2$}
\put(247,90){$_3$}
\put(267,90){$_4$}
 \end{picture}
\end{center}
\vspace{.3cm}

\begin{center}
\begin{picture}(280,20)(0,120)
\put(10,100){\circle{4}} \put(12,100){\line(1,0){16}}
\put(30,100){\circle{4}} \put(32,100){\line(1,0){16}}
\put(50,100){\circle{4}} \put(52,100){\line(1,0){16}}
\put(70,100){\circle{4}} \put(72,100){\line(1,0){16}}
\put(90,100){\circle{4}} \put(92,100){\line(1,0){16}}
\put(110,100){\circle{4}} \put(70,102){\line(0,1){11}}
\put(70,115){\circle{4}} \put(-10,100){\makebox(0,0)[cc]{$E_7:$}}
\put(7,90){$_1$}
\put(27,90){$_3$}
\put(47,90){$_4$}
\put(67,90){$_5$}
\put(87,90){$_6$}
\put(107,90){$_7$}
\put(74,115){$_2$}

\put(210,100){\circle{4}} \put(230,100){\circle{4}}
\put(211,102){\line(1,0){18}}
\put(212,100){\line(1,0){16}}
\put(211,98){\line(1,0){18}}
\put(190,100){\makebox(0,0)[cc]{$G_2:$}}
\put(207,110){$_1$}
\put(227,110){$_2$}
 \put(215,100){\line(3,-1){9}}
 \put(215,100){\line(3,1){9}}
 \end{picture}
\end{center}
\vspace{.3cm}

\begin{center}
\begin{picture}(280,20)(0,120)
\put(10,100){\circle{4}} \put(12,100){\line(1,0){16}}
\put(30,100){\circle{4}} \put(32,100){\line(1,0){16}}
\put(50,100){\circle{4}} \put(52,100){\line(1,0){16}}
\put(70,100){\circle{4}} \put(72,100){\line(1,0){16}}
\put(90,100){\circle{4}} \put(92,100){\line(1,0){16}}
\put(110,100){\circle{4}} \put(112,100){\line(1,0){16}}
\put(130,100){\circle{4}} \put(90,102){\line(0,1){11}}
\put(90,115){\circle{4}} \put(-10,100){\makebox(0,0)[cc]{$E_8:$}}
\put(7,90){$_1$}
\put(27,90){$_3$}
\put(47,90){$_4$}
\put(67,90){$_5$}
\put(87,90){$_6$}
\put(107,90){$_7$}
\put(127,90){$_8$}
\put(94,115){$_2$}
 \end{picture}
\end{center}
\vspace{2cm}

The connected components of the Dynkin diagram $\mathcal{D}$ determine the simple Lie groups that are factors of the semisimple Lie group $G$, each of them corresponding to one of the Dynkin diagrams above.

\subsection{Parabolic Subgroup and Subalgebra}
A closed subgroup $P$ of $G$ is called \emph{parabolic} if the quotient space $G/P$ is complete, hence projective. A maximal connected solvable subgroup $B$ of $G$ is called a \emph{Borel} subgroup.
We fix a Cartan subalgebra $\mathfrak{h}$. Let
\[\mathfrak{b}=\mathfrak{h}\oplus \bigoplus_{\alpha\in \Phi^-}\mathfrak{g}_\alpha\]
be a fixed Borel subalgebra. It is easy to determine the parabolic subalgebras containing $\mathfrak{b}$. They are all of the form
\[\mathfrak{p}=\mathfrak{b}\oplus\bigoplus_{\alpha\in\Phi^+_P}\mathfrak{g}_\alpha,\]
where $\Phi^+_P$ is a subset of $\Phi^+$ that is closed under the addition of roots. Hence the parabolic subalgebras containing $\mathfrak{b}$ lie in bijection with the subsets of $D$. For some subset $I$ of $D$, we write $\mathfrak{p}_I$ the parabolic subalgebra corresponding to $I$. We define $P_I$ by the parabolic subgroup of $G$ such that its Lie algebra is $\mathfrak{p}_I$. Therefore the parabolic subgroup $P_I$ corresponds to the subset $I$ (so a maximal parabolic subgroup is defined by a single root). Then $G/P_I$ has a minimal homogeneous embedding in projective space of the highest weigh module $V_\lambda$ of $G$ corresponding to the highest weight $\lambda=\sum_{i\in I}\omega_i$, where $\omega_i$ is the $i$-th \emph{fundamental weight} dual to the roots $\alpha_i \in \Pi$, by a very ample line bundle $L_\lambda$.
\subsection{Rational homogeneous spaces}
It is well known that $G/P$ carries a transitive $G$-action, it is a smooth projective variety. Borel and Remmert's classical theorem (\cite{B-R}) states that a projective complex manifold which admits a transitive action of its automorphism group is a direct product of an abelian variety and a rational homogeneous space $G/P$, where $G$ is a semi-simple algebraic group and $P$ is a parabolic subgroup.

Every rational homogeneous space $G/P$ can be decomposed into a product
\[G/P\simeq G_1/P_{I_1}\times G_2/P_{I_2}\times \cdots \times G_m/P_{I_m}\]
of rational homogeneous spaces with simple algebraic group $G_1, \cdots, G_m$. Each rational homogeneous space $G_i/P_{I_i}$, called the \emph{generalized flag manifold}, only depends on the Lie algebra $\mathfrak{g}_i$ of $G_i$, which is classically determined by the marked Dynkin diagram (\cite{L-M}). In the most common notation, we set $F_I:=G/P_I$ by marking on the Dynkin diagram $\mathcal{D}$ of $G$ the nodes corresponding to $I$. For instance, numbering the nodes of $A_n$, the usual flag manifold $F(d_1,\ldots,d_s;n+1)$ corresponds to the marking of $I=\{d_1,\ldots,d_s\}$ (sometimes we omit the braces and just write as $P_{d_1, \ldots, d_s}$).

The two extremal cases correspond to the \emph{generalized complete flag manifolds} (all nodes marked), and the \emph{generalized Grassmannian} (only one node marked).

In \cite{L-M}, the authors explicitly describe the lines through a point of a rational homogeneous space $G/P_I$, where $G$ is a simple Lie group. Let $j \in I$ and $N(j)$ be the set of nodes in $\mathcal{D}$ that are connected to $j$.


\begin{definition}
We call $\alpha_j (j\in I)$ an exposed short root if the connected component of $j$ in $D\backslash (I\backslash j)$ contains root longer than $\alpha_j$, i.e., if an arrow in $D\backslash (I\backslash j)$ points towards $j$.
\end{definition}
\begin{rem}
Obviously, long roots of $\mathcal{D}$ in $I$ are not exposed short roots. If $I$ is a set of single point, i.e.  $X=G/P_I$ is the \emph{generalized Grassmannian}, then the exposed short root is just the usual short root. It's worth mentioning that if $I$ contains all long roots of $\mathcal{D}$, then short roots of $D$ in $I$ are not exposed short roots.
\end{rem}
\begin{thm}(\cite{L-M}~ Theorem 4.3)\label{lines}
Let $I\subseteq D=\{1,\ldots, n\}$. Suppose $G$ to be a simple Lie group. Consider $X=G/P_I$ in its minimal homogeneous embedding. Denote by $F_1(X)$ the space of $\mathbb{P}^{1,}$s in $X$. Then
\begin{enumerate}
\item $F_1(X)=\coprod_{j\in I} F_1^j(X)$, where $F_1^j(X)$ is the space of lines of class $\check{\alpha}_j\in H_2(X, \mathbb{Z})$.
\item If $\alpha_j$ is not an exposed short root, then $F_1^j(X)=G/P_{I\backslash j\cup N(j)}$.
\item If $\alpha_j$ is an exposed short root, then $F_1^j(X)$ is the union of two $G$-orbits, an open orbit and its boundary $G/P_{I\backslash j\cup N(j)}$.
\end{enumerate}
\end{thm}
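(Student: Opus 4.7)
The plan is to translate lines in the minimal homogeneous embedding $X\hookrightarrow\mathbb{P}(V_\lambda)$ with $\lambda=\sum_{i\in I}\omega_i$ into root-theoretic data and then invoke $G$-homogeneity. Identifying the base point $o=eP_I$ with the class $[v_\lambda]$ of the highest weight vector, the tangent space $T_oX$ decomposes under the Cartan action into root spaces $\mathfrak{g}_{\gamma}$ indexed by those roots $\gamma$ whose root spaces are not absorbed into $\mathfrak{p}_I$. Any line in $X$ through $[v_\lambda]$ is the closure of the orbit of an $\mathfrak{sl}_2$-triple $(e_\gamma,h_\gamma,f_\gamma)$ whose tangent direction at $o$ lies in a weight space for such a $\gamma$, and its degree with respect to $L_\lambda$ equals $\langle\lambda,\check{\gamma}\rangle$. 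Being degree one forces $\langle\omega_j,\check{\gamma}\rangle=1$ for a single $j\in I$ and $\langle\omega_i,\check{\gamma}\rangle=0$ for every other $i\in I$; this produces the disjoint union $F_1(X)=\coprod_{j\in I}F_1^j(X)$ of part (1), with $F_1^j(X)$ collecting precisely the lines of class $\check{\alpha}_j$.

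For part (2), fix $j\in I$ and take the \emph{standard} line $L_j\subset X$ to be the closure of the orbit of $[v_\lambda]$ under the $\mathrm{SL}_2$-subgroup attached to the simple root $\alpha_j$. Since $\langle\lambda,\check{\alpha}_j\rangle=1$, the span $\mathrm{span}(v_\lambda,f_{\alpha_j}v_\lambda)$ is an irreducible two-dimensional $\mathrm{SL}_2$-submodule of $V_\lambda$, so $L_j$ is a genuine line on $X$ and $F_1^j(X)\supseteq G\cdot L_j$. I would then compute the stabiliser of $L_j$: starting from $\mathfrak{p}_I$ one removes the node $j$ in order to allow motion along the tangent direction at $o$, and one adds each neighbour $i\in N(j)$, because the root vectors for $\alpha_i$ act on $\mathfrak{g}_{\alpha_j}$ with non-zero output outside that line and so must be killed to preserve $L_j$. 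This identifies the stabiliser as $P_{I\setminus j\cup N(j)}$. When $\alpha_j$ is not an exposed short root, $\gamma=\alpha_j$ is the unique positive root satisfying the degree-one weight conditions above, so $F_1^j(X)$ exhausts $G\cdot L_j=G/P_{I\setminus j\cup N(j)}$.

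For part (3), when $\alpha_j$ is exposed short, additional positive roots $\gamma\neq\alpha_j$ lying in the connected component of $j$ in $D\setminus(I\setminus j)$ also satisfy the degree-one condition: the arrow in that component pointing at $j$ produces short coroots $\check{\gamma}$ with $\langle\lambda,\check{\gamma}\rangle=1$ while $\gamma$ itself is strictly longer than $\alpha_j$. Each such $\gamma$ produces another $\mathrm{SL}_2$-line through $o$ whose $G$-orbit has strictly smaller isotropy, hence strictly larger dimension, and is therefore open in $F_1^j(X)$, with the closed orbit $G/P_{I\setminus j\cup N(j)}$ from part (2) sitting in its boundary.

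The main obstacle is exactly this last incidence statement in (3): one must rule out further intermediate $G$-orbits and confirm that the boundary of the open orbit equals the closed orbit of part (2). I would settle it by restricting to the rank-two sub-root system spanned by $\alpha_j$ together with its neighbour in $D\setminus(I\setminus j)$, reducing to small generalized Grassmannians in types $B_n$, $C_n$, $F_4$, $G_2$, where the two families of minimal rational curves can be enumerated by hand and the degeneration from a long-root $\mathrm{SL}_2$ to a simple-root $\mathrm{SL}_2$ is made explicit via a one-parameter subgroup in the Chevalley basis. Once this rank-two reduction is in place, the rest is bookkeeping from the degree calculation $\langle\lambda,\check{\gamma}\rangle=1$ and the stabiliser computation already used in part (2).
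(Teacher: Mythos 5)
The paper does not prove this statement: it is quoted from Landsberg--Manivel (\cite{L-M}, Theorem 4.3) without proof, so there is no internal argument to compare against; I can only measure your proposal against the standard one. Your architecture (degree computation $\langle\lambda,\check{\gamma}\rangle=1$, orbit-plus-stabiliser for the standard line, extra long-root directions in the exposed-short case) has the right shape, but it contains one load-bearing error. In part (2) you assert that when $\alpha_j$ is not exposed short, $\gamma=\alpha_j$ is the \emph{unique} positive root satisfying the degree-one weight conditions, and you use this to conclude $F_1^j(X)=G\cdot L_j$. That uniqueness is false already for $X=G(2,4)=A_3/P_2$: the four roots $\alpha_2$, $\alpha_1+\alpha_2$, $\alpha_2+\alpha_3$, $\alpha_1+\alpha_2+\alpha_3$ all satisfy $\langle\omega_2,\check{\gamma}\rangle=1$, and they are precisely the four $T$-fixed tangent directions of $C_x=\mathbb{P}^1\times\mathbb{P}^1$. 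In general many roots of the nilradical (those with $\alpha_j$-coefficient $1$, vanishing $\alpha_i$-coefficient for $i\in I\setminus j$, and the appropriate coroot-length behaviour) give degree-one directions. So homogeneity of $F_1^j(X)$ --- the actual content of part (2) --- is not established by your argument; what is needed is transitivity of the isotropy group $P_I$ on the lines of class $\check{\alpha}_j$ through the base point, i.e.\ the identification $C_x^j=H/P_{N(j)}$ of Theorem \ref{lines2}, and that is exactly where the ``not exposed short'' hypothesis enters (it guarantees $C_x^j$ is a single orbit rather than carrying an extra open piece).

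Two smaller points. In part (1) you model every line through $o$ as the closure of an $\mathfrak{sl}_2$-orbit whose tangent direction lies in a single root space; a general line of class $\check{\alpha}_j$ through $o$ has tangent direction at a non-$T$-fixed point of $C_o^j$, so this only accounts for the $T$-fixed lines. The clean argument for (1) is homological: $H_2(X,\mathbb{Z})$ is freely generated by the classes $\check{\alpha}_j$, $j\in I$, each pairing to $1$ with $L_\lambda$, so a degree-one curve class must be a single $\check{\alpha}_j$. In part (3) you correctly locate the source of the open orbit (longer roots $\gamma$ in the component of $j$ in $D\setminus(I\setminus j)$ with $\langle\lambda,\check{\gamma}\rangle=1$), but the claims that this orbit is open and that its boundary is exactly $G/P_{I\backslash j\cup N(j)}$ with no intermediate orbits are the hard part; your rank-two reduction is a plausible plan but is not carried out, so part (3) remains a sketch rather than a proof.
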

\begin{rem}
If $I=\{j\}$ and $\alpha_j$ is a long root, then $F_1(X)$ is just the variety of lines on $X$.
\end{rem}
\begin{ex}
Let's consider the Dynkin diagram $A_n$, i.e. $X=SL_{n+1}/P_I$ is the \emph{generalized flag manifold}.

1) For $I=\{k\}$,  $X$ is the usual Grassmannian and $F_1(X)$ is just the variety of lines on $X$.

\setlength{\unitlength}{0.4mm}
\begin{center}
	\begin{picture}(280,0)(0,180)
\put(10,170){\circle{4}} \put(12,170){\line(1,0){16}}\put(30,170){\circle{4}}
 \put(40,170){\circle*{2}}
 \put(45,170){\circle*{2}} \put(50,170){\circle*{2}}
\put(60,170){\circle{4}}\put(62,170){\line(1,0){16}} \put(80,170){\circle*{4}}
\put(82,170){\line(1,0){16}}\put(100,170){\circle{4}}
\put(110,170){\circle*{2}}
\put(115,170){\circle*{2}} \put(120,170){\circle*{2}}
\put(130,170){\circle{4}}\put(132,170){\line(1,0){16}} \put(150,170){\circle{4}}
\put(152,170){\line(1,0){16}}\put(170,170){\circle{4}}

\put(-10,170){\makebox(0,0)[cc]{$X:$}}
\put(7,180){$_1$}
\put(27,180){$_2$}
\put(57,180){$_{k-1}$}
\put(77,180){$_{k}$}
\put(97,180){$_{k+1}$}
\put(127,180){$_{n-2}$}
\put(147,180){$_{n-1}$}
\put(167,180){$_{n}$}
\put(10,140){\circle{4}} \put(12,140){\line(1,0){16}}\put(30,140){\circle{4}}
\put(40,140){\circle*{2}}
\put(45,140){\circle*{2}} \put(50,140){\circle*{2}}
\put(60,140){\circle*{4}}\put(62,140){\line(1,0){16}} \put(80,140){\circle{4}}
\put(82,140){\line(1,0){16}}\put(100,140){\circle*{4}}
\put(110,140){\circle*{2}}
\put(115,140){\circle*{2}} \put(120,140){\circle*{2}}
\put(130,140){\circle{4}}\put(132,140){\line(1,0){16}} \put(150,140){\circle{4}}
\put(152,140){\line(1,0){16}}\put(170,140){\circle{4}}
\put(-20,140){\makebox(0,0)[cc]{$F_1(X):$}}
\put(7,150){$_1$}
\put(27,150){$_2$}
\put(57,150){$_{k-1}$}
\put(77,150){$_{k}$}
\put(97,150){$_{k+1}$}
\put(127,150){$_{n-2}$}
\put(147,150){$_{n-1}$}
\put(167,150){$_{n}$}
 \end{picture}
\end{center}
\vspace{2cm}
2) For $I=\{d_1,d_2\}$, $X$ is the usual flag manifold $F(d_1,d_2;n+1)$ and $F_1(X)$ is the disjoint union of $F_1^{d_1}(X)$ and $F_1^{d_2}(X)$.

\setlength{\unitlength}{0.4mm}
\begin{center}
	\begin{picture}(280,0)(0,250)
	\put(10,240){\circle{4}} \put(12,240){\line(1,0){16}}\put(30,240){\circle{4}}
	\put(40,240){\circle*{2}}
	\put(45,240){\circle*{2}} \put(50,240){\circle*{2}}
	\put(60,240){\circle{4}}\put(62,240){\line(1,0){16}} \put(80,240){\circle*{4}}
	\put(82,240){\line(1,0){16}}\put(100,240){\circle{4}}
	\put(110,240){\circle*{2}}
	\put(115,240){\circle*{2}} \put(120,240){\circle*{2}}
	\put(130,240){\circle{4}}\put(132,240){\line(1,0){16}} \put(150,240){\circle*{4}}
	\put(152,240){\line(1,0){16}}\put(170,240){\circle{4}}
	\put(180,240){\circle*{2}}
	\put(185,240){\circle*{2}} \put(190,240){\circle*{2}}
	\put(200,240){\circle{4}}\put(202,240){\line(1,0){16}} \put(220,240){\circle{4}}
	\put(222,240){\line(1,0){16}}\put(240,240){\circle{4}}
	\put(-10,240){\makebox(0,0)[cc]{$X:$}}
	\put(7,250){$_1$}
	\put(27,250){$_2$}
	\put(57,250){$_{d_1-1}$}
	\put(77,250){$_{d_1}$}
	\put(97,250){$_{d_1+1}$}
	\put(127,250){$_{d_2-1}$}
	\put(147,250){$_{d_2}$}
	\put(167,250){$_{d_2+1}$}
	\put(197,250){$_{n-2}$}
	\put(217,250){$_{n-1}$}
	\put(237,250){$_{n}$}
\put(10,210){\circle{4}} \put(12,210){\line(1,0){16}}\put(30,210){\circle{4}}
\put(40,210){\circle*{2}}
\put(45,210){\circle*{2}} \put(50,210){\circle*{2}}
\put(60,210){\circle*{4}}\put(62,210){\line(1,0){16}} \put(80,210){\circle{4}}
\put(82,210){\line(1,0){16}}\put(100,210){\circle*{4}}
\put(110,210){\circle*{2}}
\put(115,210){\circle*{2}} \put(120,210){\circle*{2}}
\put(130,210){\circle{4}}\put(132,210){\line(1,0){16}} \put(150,210){\circle*{4}}
\put(152,210){\line(1,0){16}}\put(170,210){\circle{4}}
\put(180,210){\circle*{2}}
\put(185,210){\circle*{2}} \put(190,210){\circle*{2}}
\put(200,210){\circle{4}}\put(202,210){\line(1,0){16}} \put(220,210){\circle{4}}
\put(222,210){\line(1,0){16}}\put(240,210){\circle{4}}
\put(-20,210){\makebox(0,0)[cc]{$F_1^{d_1}(X):$}}
\put(7,220){$_1$}
\put(27,220){$_2$}
\put(57,220){$_{d_1-1}$}
\put(77,220){$_{d_1}$}
\put(97,220){$_{d_1+1}$}
\put(127,220){$_{d_2-1}$}
\put(147,220){$_{d_2}$}
\put(167,220){$_{d_2+1}$}
\put(197,220){$_{n-2}$}
\put(217,220){$_{n-1}$}
\put(237,220){$_{n}$}
	\end{picture}
\end{center}
\vspace{2cm}
\setlength{\unitlength}{0.4mm}
\begin{center}
	\begin{picture}(280,0)(0,250)
	\put(10,240){\circle{4}} \put(12,240){\line(1,0){16}}\put(30,240){\circle{4}}
	\put(40,240){\circle*{2}}
	\put(45,240){\circle*{2}} \put(50,240){\circle*{2}}
	\put(60,240){\circle{4}}\put(62,240){\line(1,0){16}} \put(80,240){\circle*{4}}
	\put(82,240){\line(1,0){16}}\put(100,240){\circle{4}}
	\put(110,240){\circle*{2}}
	\put(115,240){\circle*{2}} \put(120,240){\circle*{2}}
	\put(130,240){\circle{4}}\put(132,240){\line(1,0){16}} \put(150,240){\circle*{4}}
	\put(152,240){\line(1,0){16}}\put(170,240){\circle{4}}
	\put(180,240){\circle*{2}}
	\put(185,240){\circle*{2}} \put(190,240){\circle*{2}}
	\put(200,240){\circle{4}}\put(202,240){\line(1,0){16}} \put(220,240){\circle{4}}
	\put(222,240){\line(1,0){16}}\put(240,240){\circle{4}}
	\put(-10,240){\makebox(0,0)[cc]{$X:$}}
	\put(7,250){$_1$}
	\put(27,250){$_2$}
	\put(57,250){$_{d_1-1}$}
	\put(77,250){$_{d_1}$}
	\put(97,250){$_{d_1+1}$}
	\put(127,250){$_{d_2-1}$}
	\put(147,250){$_{d_2}$}
	\put(167,250){$_{d_2+1}$}
	\put(197,250){$_{n-2}$}
	\put(217,250){$_{n-1}$}
	\put(237,250){$_{n}$}
	\put(10,210){\circle{4}} \put(12,210){\line(1,0){16}}\put(30,210){\circle{4}}
	\put(40,210){\circle*{2}}
	\put(45,210){\circle*{2}} \put(50,210){\circle*{2}}
	\put(60,210){\circle{4}}\put(62,210){\line(1,0){16}} \put(80,210){\circle*{4}}
	\put(82,210){\line(1,0){16}}\put(100,210){\circle{4}}
	\put(110,210){\circle*{2}}
	\put(115,210){\circle*{2}} \put(120,210){\circle*{2}}
	\put(130,210){\circle*{4}}\put(132,210){\line(1,0){16}} \put(150,210){\circle{4}}
	\put(152,210){\line(1,0){16}}\put(170,210){\circle*{4}}
	\put(180,210){\circle*{2}}
	\put(185,210){\circle*{2}} \put(190,210){\circle*{2}}
	\put(200,210){\circle{4}}\put(202,210){\line(1,0){16}} \put(220,210){\circle{4}}
	\put(222,210){\line(1,0){16}}\put(240,210){\circle{4}}
	\put(-20,210){\makebox(0,0)[cc]{$F_1^{d_2}(X):$}}
	\put(7,220){$_1$}
	\put(27,220){$_2$}
	\put(57,220){$_{d_1-1}$}
	\put(77,220){$_{d_1}$}
	\put(97,220){$_{d_1+1}$}
	\put(127,220){$_{d_2-1}$}
	\put(147,220){$_{d_2}$}
	\put(167,220){$_{d_2+1}$}
	\put(197,220){$_{n-2}$}
	\put(217,220){$_{n-1}$}
	\put(237,220){$_{n}$}
	\end{picture}
\end{center}
\vspace{2cm}
\end{ex}	
There is a similar statement for $C_x\subseteq \mathbb{P}T_xX$, the set of tangent directions to lines on $X$ passing through a fixed point $x$. It is a disjoint  union of spaces of lines of class $\check{\alpha}$ through $x$.
\begin{thm}(\cite{L-M}~ Theorem 4.8)\label{lines2}
	Let $I\subseteq D=\{1,\ldots, n\}$ and $j\in I$. Suppose $G$ to be a simple Lie group. Consider $X=G/P_I$ in its minimal homogeneous embedding. Let $H$ be the semisimple part of $P_I$ and $D(H)$ be the components of $(\overline{D\backslash I})\backslash j$ containing an element of $N(j)$, where $\overline{D\backslash I}$ means $D\backslash I$ plus any nodes of $I$ attached to a node of $D\backslash I$. Denote by $C_x^j$ the space of lines of class $\check{\alpha}_j$ through $x$. Then
	\begin{enumerate}
		\item If $\alpha_j$ is not an exposed short root, then $C_x^j=H/P_{N(j)}$.
		\item If $\alpha_j$ is an exposed short root, then $C_x^j$ is a union of an open $P_I$-orbit and its boundary $H/P_{N(j)}$.
	\end{enumerate}
\end{thm}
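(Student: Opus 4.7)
The plan is to realize $C_x^j$ as a fiber of the incidence correspondence $\mathcal{U}^j \subset X \times F_1^j(X)$ between points and lines of class $\check{\alpha}_j$, and then identify the fiber explicitly via Levi decomposition. Concretely, the two natural projections $p_1 \colon \mathcal{U}^j \to X$ and $p_2 \colon \mathcal{U}^j \to F_1^j(X)$ realize $\mathcal{U}^j$ as a $\mathbb{P}^1$-bundle over $F_1^j(X)$, and $p_1^{-1}(x) = C_x^j$ by definition. When $\alpha_j$ is not an exposed short root, Theorem \ref{lines} gives $F_1^j(X) = G/P_{I \setminus j \cup N(j)}$, so the $G$-equivariant variety $\mathcal{U}^j$ is homogeneous. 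Using the description of standard parabolics containing $B$, I would verify that $P_I \cap P_{I \setminus j \cup N(j)} = P_{I \cup N(j)}$ by intersecting the corresponding root-subspace sums (a root lies in both Levis iff its expansion uses only $\alpha_i$ with $i \in D \setminus (I \cup N(j))$). Hence $\mathcal{U}^j \cong G/P_{I \cup N(j)}$ and the fiber over the base point is $C_x^j \cong P_I / P_{I \cup N(j)}$.

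To bring this quotient into the form $H/P_{N(j)}$, I would pass to the Levi. The inclusion of unipotent radicals $U_I \subseteq U_{I \cup N(j)}$ (which holds because enlarging the marked set enlarges the unipotent radical in the paper's convention) lets one cancel $U_I$ and obtain $P_I/P_{I\cup N(j)} \cong L_I/(L_I \cap P_{I \cup N(j)})$. The intersection $L_I \cap P_{I \cup N(j)}$ is a standard parabolic in $L_I$ whose Dynkin diagram $D \setminus I$ has exactly the nodes of $N(j) \cap (D \setminus I)$ marked. Connected components of $D \setminus I$ that contain no neighbor of $j$ appear identically in numerator and denominator and so cancel; only components of $(\overline{D \setminus I}) \setminus j$ meeting $N(j)$ survive, which is precisely the definition of $D(H)$. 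This identifies $C_x^j$ with $H/P_{N(j)}$ and proves part (1).

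For part (2), when $\alpha_j$ is an exposed short root the same analysis applied to the closed $G$-orbit $G/P_{I \setminus j \cup N(j)} \subset F_1^j(X)$ shows that its preimage under $p_1$ in the fiber over $x$ is precisely $H/P_{N(j)}$. It remains to show that the preimage of the open $G$-orbit in $F_1^j(X)$ meets $C_x^j$ in a single open $P_I$-orbit whose boundary is $H/P_{N(j)}$. I expect this to be the main obstacle: one must verify that the additional short-root tangent directions at $x$ (coming from short-root reflections not realized inside $L_I$) form exactly one open $P_I$-orbit in $\mathbb{P}(T_xX)$ with boundary the $L_I$-orbit already found. This would be carried out at the Lie-algebra level by writing down representative tangent vectors along short-root directions, computing their $P_I$-stabilizers, and invoking the classification of exposed short roots (which, by the \emph{Remark} above, only occurs in types $B$, $C$, $F_4$, $G_2$) to reduce the check to a small number of explicit cases.
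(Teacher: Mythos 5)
First, a point of comparison: the paper offers no proof of this statement at all --- it is imported verbatim, with citation, as Theorem 4.8 of \cite{L-M} --- so there is no in-paper argument to measure yours against; what follows assesses your sketch on its own terms. Your treatment of part (1) is the standard and correct argument, and is essentially the one in \cite{L-M}: for standard parabolics containing a common Borel one indeed has $P_I\cap P_{I\setminus j\cup N(j)}=P_{I\cup N(j)}$, the fiber of $\mathcal{U}^j=G/P_{I\cup N(j)}\to X$ over the base point is $P_I/P_{I\cup N(j)}$, and since the unipotent radical of $P_I$ sits inside $P_{I\cup N(j)}$ the quotient descends to the Levi, where the components of the subdiagram not meeting $N(j)$ cancel. (Be slightly careful with the paper's $\overline{D\setminus I}$: when $N(j)$ meets $I$ the surviving diagram can pick up nodes of $I$ adjacent to $D\setminus I$, so ``exactly the nodes of $N(j)\cap(D\setminus I)$ marked'' glosses a notational fine point, though it does not affect the structure of the argument.)

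Part (2), however, is where the actual content of the theorem lies in the exposed-short-root case, and your proposal does not prove it. What your argument establishes is only that the lines through $x$ lying in the closed $G$-orbit of $F_1^j(X)$ form a copy of $H/P_{N(j)}$. The claims that the remaining lines of class $\check{\alpha}_j$ through $x$ constitute a \emph{single} $P_I$-orbit, that this orbit is open in $C_x^j$, and that its boundary is exactly $H/P_{N(j)}$ (rather than, say, the two pieces being disjoint closed subvarieties, or the complement breaking into several orbits) are all deferred to ``the main obstacle,'' for which you only name a strategy --- representative tangent vectors, stabilizer computations, reduction to types $B$, $C$, $F_4$, $G_2$ --- without carrying any of it out. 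That case-by-case geometric analysis is precisely the substance of the cited result in the short-root situation, so as written the proposal proves (1) but only restates (2).
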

\begin{rem}\label{vmrt}
If $I=\{j\}$, then the set of nodes of the Dynkin diagram $H$ is $D(H)=D\backslash {j}$. $P_{N(j)}$ is a parabolic subgroup of $H$ by marking in $D(H)$ the nodes in $D$ that are connected to $j$ and $C_x^j=H/P_{N(j)}$. Moreover, if $\alpha_j$ is a long root then $C_x^j$ is just the variety of lines through fixed points, i.e. so-called $\text{VMRT}$s. We refer to \cite{Hwa} for a complete account on $\text{VMRT}$s.
\end{rem}
\begin{ex}
	Let's consider the Dynkin diagram $A_n$, i.e. $X=SL_{n+1}/P_I$ is the \emph{generalized flag manifold}.
	
	1) For $I=\{k\}$,  $X$ is the usual Grassmannian and $C_x=\mathbb{P}^{k-1}\times\mathbb{P}^{n-k}$ is just the variety of lines through $x$.
	\setlength{\unitlength}{0.4mm}
	\begin{center}
		\begin{picture}(280,0)(0,180)
		\put(10,170){\circle{4}} \put(12,170){\line(1,0){16}}\put(30,170){\circle{4}}
		\put(40,170){\circle*{2}}
		\put(45,170){\circle*{2}} \put(50,170){\circle*{2}}
		\put(60,170){\circle{4}}\put(62,170){\line(1,0){16}} \put(80,170){\circle*{4}}
		\put(82,170){\line(1,0){16}}\put(100,170){\circle{4}}
		\put(110,170){\circle*{2}}
		\put(115,170){\circle*{2}} \put(120,170){\circle*{2}}
		\put(130,170){\circle{4}}\put(132,170){\line(1,0){16}} \put(150,170){\circle{4}}
		\put(152,170){\line(1,0){16}}\put(170,170){\circle{4}}
		
		\put(-10,170){\makebox(0,0)[cc]{$X:$}}
		\put(7,180){$_1$}
		\put(27,180){$_2$}
		\put(57,180){$_{k-1}$}
		\put(77,180){$_{k}$}
		\put(97,180){$_{k+1}$}
		\put(127,180){$_{n-2}$}
		\put(147,180){$_{n-1}$}
		\put(167,180){$_{n}$}
		\put(10,140){\circle{4}} \put(12,140){\line(1,0){16}}\put(30,140){\circle{4}}
		\put(40,140){\circle*{2}}
		\put(45,140){\circle*{2}} \put(50,140){\circle*{2}}
		\put(60,140){\circle*{4}}\put(77,140){$\times$}
		\put(100,140){\circle*{4}}
		\put(110,140){\circle*{2}}
		\put(115,140){\circle*{2}} \put(120,140){\circle*{2}}
		\put(130,140){\circle{4}}\put(132,140){\line(1,0){16}} \put(150,140){\circle{4}}
		\put(152,140){\line(1,0){16}}\put(170,140){\circle{4}}
		
		\put(-10,140){\makebox(0,0)[cc]{$C_x:$}}
		\put(7,150){$_1$}
		\put(27,150){$_2$}
		\put(57,150){$_{k-1}$}
		\put(97,150){$_{k+1}$}
		\put(127,150){$_{n-2}$}
		\put(147,150){$_{n-1}$}
		\put(167,150){$_{n}$}
		\end{picture}
	\end{center}
	\vspace{2cm}
	2) For $I=\{d_1,d_2\}$, $X$ is the usual flag manifold $F(d_1,d_2;n+1)$ and $C_x$ is the disjoint union of $C_x^{d_1}$ and $C_x^{d_2}$.
	\[
	C_x^{d_1}=\mathbb{P}^{d_1-1}\times\mathbb{P}^{d_2-d_1-1},~~C_x^{d_2}=\mathbb{P}^{d_2-d_1-1}\times\mathbb{P}^{n-d_2}.
	\]
	
	\setlength{\unitlength}{0.4mm}
	\begin{center}
		\begin{picture}(280,0)(0,250)
		\put(10,240){\circle{4}} \put(12,240){\line(1,0){16}}\put(30,240){\circle{4}}
		\put(40,240){\circle*{2}}
		\put(45,240){\circle*{2}} \put(50,240){\circle*{2}}
		\put(60,240){\circle{4}}\put(62,240){\line(1,0){16}} \put(80,240){\circle*{4}}
		\put(82,240){\line(1,0){16}}\put(100,240){\circle{4}}
		\put(110,240){\circle*{2}}
		\put(115,240){\circle*{2}} \put(120,240){\circle*{2}}
		\put(130,240){\circle{4}}\put(132,240){\line(1,0){16}} \put(150,240){\circle*{4}}
		\put(152,240){\line(1,0){16}}\put(170,240){\circle{4}}
		\put(180,240){\circle*{2}}
		\put(185,240){\circle*{2}} \put(190,240){\circle*{2}}
		\put(200,240){\circle{4}}\put(202,240){\line(1,0){16}} \put(220,240){\circle{4}}
		\put(222,240){\line(1,0){16}}\put(240,240){\circle{4}}
		\put(-10,240){\makebox(0,0)[cc]{$X:$}}
		\put(7,250){$_1$}
		\put(27,250){$_2$}
		\put(57,250){$_{d_1-1}$}
		\put(77,250){$_{d_1}$}
		\put(97,250){$_{d_1+1}$}
		\put(127,250){$_{d_2-1}$}
		\put(147,250){$_{d_2}$}
		\put(167,250){$_{d_2+1}$}
		\put(197,250){$_{n-2}$}
		\put(217,250){$_{n-1}$}
		\put(237,250){$_{n}$}
		\put(42,210){\circle{4}} \put(44,210){\line(1,0){16}}\put(62,210){\circle{4}}
		\put(72,210){\circle*{2}}
		\put(77,210){\circle*{2}} \put(82,210){\circle*{2}}
		\put(90,210){\circle*{4}}\put(112,210){$\times$}
		\put(130,210){\circle*{4}}
		\put(142,210){\circle*{2}}
		\put(147,210){\circle*{2}} \put(152,210){\circle*{2}}
		\put(162,210){\circle{4}}\put(164,210){\line(1,0){16}} \put(182,210){\circle{4}}
		\put(-10,210){\makebox(0,0)[cc]{$C_x^{d_1}:$}}
		\put(39,220){$_1$}
		\put(59,220){$_2$}
		\put(89,220){$_{d_1-1}$}
		\put(129,220){$_{d_1+1}$}
		\put(159,220){$_{d_2-2}$}
		\put(179,220){$_{d_2-1}$}
		\end{picture}
	\end{center}
	\vspace{2cm}
		\setlength{\unitlength}{0.4mm}
		\begin{center}
			\begin{picture}(280,0)(0,250)
			\put(10,240){\circle{4}} \put(12,240){\line(1,0){16}}\put(30,240){\circle{4}}
			\put(40,240){\circle*{2}}
			\put(45,240){\circle*{2}} \put(50,240){\circle*{2}}
			\put(60,240){\circle{4}}\put(62,240){\line(1,0){16}} \put(80,240){\circle*{4}}
			\put(82,240){\line(1,0){16}}\put(100,240){\circle{4}}
			\put(110,240){\circle*{2}}
			\put(115,240){\circle*{2}} \put(120,240){\circle*{2}}
			\put(130,240){\circle{4}}\put(132,240){\line(1,0){16}} \put(150,240){\circle*{4}}
			\put(152,240){\line(1,0){16}}\put(170,240){\circle{4}}
			\put(180,240){\circle*{2}}
			\put(185,240){\circle*{2}} \put(190,240){\circle*{2}}
			\put(200,240){\circle{4}}\put(202,240){\line(1,0){16}} \put(220,240){\circle{4}}
			\put(222,240){\line(1,0){16}}\put(240,240){\circle{4}}
			\put(-10,240){\makebox(0,0)[cc]{$X:$}}
			\put(7,250){$_1$}
			\put(27,250){$_2$}
			\put(57,250){$_{d_1-1}$}
			\put(77,250){$_{d_1}$}
			\put(97,250){$_{d_1+1}$}
			\put(127,250){$_{d_2-1}$}
			\put(147,250){$_{d_2}$}
			\put(167,250){$_{d_2+1}$}
			\put(197,250){$_{n-2}$}
			\put(217,250){$_{n-1}$}
			\put(237,250){$_{n}$}
			\put(42,210){\circle{4}} \put(44,210){\line(1,0){16}}\put(62,210){\circle{4}}
			\put(72,210){\circle*{2}}
			\put(77,210){\circle*{2}} \put(82,210){\circle*{2}}
			\put(90,210){\circle*{4}}\put(112,210){$\times$}
			\put(130,210){\circle*{4}}
			\put(142,210){\circle*{2}}
			\put(147,210){\circle*{2}} \put(152,210){\circle*{2}}
			\put(162,210){\circle{4}}\put(164,210){\line(1,0){16}} \put(182,210){\circle{4}}
			\put(-10,210){\makebox(0,0)[cc]{$C_x^{d_2}:$}}
			\put(39,220){$_{d_1+1}$}
			\put(59,220){$_{d_1+2}$}
			\put(89,220){$_{d_2-1}$}
			\put(129,220){$_{d_2+1}$}
			\put(159,220){$_{n-1}$}
			\put(179,220){$_{n}$}
			\end{picture}
		\end{center}
		\vspace{2cm}
\end{ex}	

Not only $\mathbb{P}^1$ but also all linear spaces can be read from the marked Dynkin diagrams.

\begin{thm}(\cite{L-M}~ Theorem 4.9, 4.14)\label{hdp}
Let $G$ be a simple group and $X=G/P_S$ is a rational homogeneous space. Let $F_{k}^{\alpha}(X)$ denote the variety parameterizing the $\alpha$-class $\mathbb{P}^k$'s on $X$.
\begin{itemize}
  \item If $\alpha\in S$ is not an exposed short root, then for all $k$, $F_{k}^{\alpha}(X)$ is the disjoint union of homogeneous spaces $G/P_{\sum\beta_j}$, where $\{\beta_j\}$ is a set of positive roots such that the component of $\mathcal{D}\backslash \{\beta_j\}$ containing $\alpha$ is isomorphic to $\mathcal{D}(A_k)$, intersects $S$ only in $\alpha$, and $\alpha$ is an extremal node of  this component.
  \item If $\alpha\in S$ is an exposed short root, then for all $k$, $F_{k}^{\alpha}(X)$ consists of a finite number of $G$-orbits.
\end{itemize}
\end{thm}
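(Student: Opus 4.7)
The plan is to reduce the classification of $\alpha$-class linear $\mathbb{P}^k$'s on $X=G/P_S$ to a purely combinatorial problem about sub-diagrams of the marked Dynkin diagram $(\mathcal{D},S)$, via $G$-homogeneity and induction on $k$, generalizing the arguments behind Theorems \ref{lines} and \ref{lines2}.

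First, since $F_k^{\alpha}(X)$ carries a natural $G$-action, each irreducible component is a $G$-orbit of the form $G/P_J$ for some parabolic $P_J\subset G$. By homogeneity it suffices to fix a base point $x\in X$ and classify the $\alpha$-class $\mathbb{P}^k$'s through $x$ up to the isotropy group $P_S$. Each such $\mathbb{P}^k$ is intrinsically isomorphic to $SL_{k+1}/P_1$ with marked diagram $(\mathcal{D}(A_k),\{1\})$, and its orbit type is governed by its tangent $(k-1)$-plane inside the variety of tangent directions $C_x^{\alpha}$ described in Theorem \ref{lines2}. The base case $k=1$ of the induction is exactly Theorem \ref{lines}; the inductive step realizes a $\mathbb{P}^k$ as a cone over a $\mathbb{P}^{k-1}$ with apex $x$, so one only needs to classify which tangent $(k-1)$-planes in $C_x^\alpha$ actually exponentiate to a linear $\mathbb{P}^k$ inside $X$.

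Second, when $\alpha$ is not an exposed short root, Theorem \ref{lines2} gives $C_x^{\alpha}=H/P_{N(\alpha)}$ as a single orbit. An $\alpha$-class $\mathbb{P}^k$ through $x$ is then generated by root subgroups $\exp(\mathfrak{g}_{\gamma_0}),\dots,\exp(\mathfrak{g}_{\gamma_k})$, with $\gamma_0=\alpha$ and the $\gamma_i$'s forming an $A_k$-type sub-root-system in which $\alpha$ is an extremal node; this is exactly the condition for the exponentials to close up into an $SL_{k+1}$-subgroup whose orbit on $x$ is a linear $\mathbb{P}^k$. The simple roots $\{\beta_j\}$ of $\mathcal{D}$ that must be removed to expose this $A_k$-chain are the nodes which border but do not belong to it, and they determine the stabilizer $P_{\sum\beta_j}$ of this $\mathbb{P}^k$. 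The side condition that the $A_k$-component meet $S$ only in $\alpha$ encodes the requirement that the $\mathbb{P}^k$ have class exactly $\check{\alpha}$ and not a smaller class pulled up from a neighbouring marked node; conversely, each such marked $A_k$-chain produces a genuine $G$-orbit in $F_k^\alpha(X)$, giving the stated disjoint-union decomposition.

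Third, when $\alpha$ is an exposed short root, Theorem \ref{lines2} gives an additional open $P_S$-orbit in $C_x^\alpha$, and the same induction propagates the extra orbits upward to finitely many $G$-orbits in $F_k^\alpha(X)$. I expect this exposed short-root case to be the main obstacle: one must show that the combinatorics of short-root brackets do not generate infinitely many new orbit types as $k$ grows. Since short roots occur only in the non-simply-laced types $B_n,C_n,F_4,G_2$, the number of possible short-root configurations along an $A_k$-style chain is a priori finite, and a case-by-case enumeration along the induction controls them. The non-exposed case reduces to bookkeeping with $A_k$-sub-root-systems and is essentially routine once Theorem \ref{lines2} is in hand; it is the short-root case where a genuine finiteness argument (and not a clean classification by marked sub-diagrams) is necessary.
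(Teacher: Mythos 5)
A preliminary remark: the paper gives no proof of this statement at all --- it is quoted from Landsberg--Manivel (\cite{L-M}, Theorems 4.9 and 4.14) and used as a black box --- so there is no in-paper argument to compare yours against. Judged on its own terms, your outline adopts the right general philosophy (use $G$-homogeneity, reduce to the variety of tangent directions $C_x^{\alpha}$ of Theorem \ref{lines2}, induct on $k$), but it has two genuine gaps that prevent it from being a proof.

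First, you assume at the outset that every irreducible component of $F_k^{\alpha}(X)$ is a single $G$-orbit of the form $G/P_J$. For a $G$-variety this is not automatic --- a component may contain many orbits --- and indeed the second bullet of the theorem says precisely that in the exposed-short-root case a component is \emph{not} a single orbit: it is an open orbit together with lower-dimensional boundary orbits in its closure. So this opening reduction begs the question in the first case and contradicts the statement in the second. Second, and more seriously, the claim that an $\alpha$-class $\mathbb{P}^k$ through $x$ ``is generated by root subgroups $\exp(\mathfrak{g}_{\gamma_0}),\dots,\exp(\mathfrak{g}_{\gamma_k})$ forming an $A_k$-type sub-root-system'' is presented as if it were a definition, but it is the substantive content of Landsberg--Manivel's Theorem 4.9: one must show that a linear $\mathbb{P}^{k-1}\subset C_x^{\alpha}\subset \mathbb{P}T_xX$ integrates to an actual linear $\mathbb{P}^k\subset X$ \emph{only} when its tangent directions span a sum of root spaces closing up to an $\mathfrak{sl}_{k+1}$, and that no ``non-standard'' linear spaces occur in the minimal embedding. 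That step needs an argument with the second fundamental form (or the algebra structure of the minimal embedding), not just the $k=1$ case of Theorem \ref{lines} plus the cone picture. Likewise, in the exposed-short-root case your finiteness assertion (``the number of possible short-root configurations along an $A_k$-style chain is a priori finite'') is not an argument: the open $P_S$-orbit in $C_x^{\alpha}$ is not homogeneous under the semisimple part of $P_S$, and nothing you write excludes positive-dimensional moduli of configurations for $k\ge 2$. As it stands, the sketch restates the theorem in Lie-theoretic language rather than proving it; a self-contained proof would have to reproduce or replace these two steps from \cite{L-M}.
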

\section{Uniform vector bundles}
Given a smooth projective variety $X$ and a vector bundle $E$ on $X$, we denote $\mathcal{M}$ to be an unsplit family of rational curves on $X$. $\mathcal{M}$ is called \emph{unsplit} if $\mathcal{M}$ is a proper $\mathbb{C}$-scheme. We say that $E$ is \emph{uniform} with respect to $\mathcal{M}$ if the restriction of $E$ to the normalization of every curve in $\mathcal{M}$ splits as a direct sum of line bundles with the same splitting type. If $X$ is a generalized Grassmannian $G/P_k$, then we just call $E$ uniform without mention the unspilt family $\mathcal{M}$.

\subsection{Uniform vector bundles on generalized Grassmannians}
Along this section we will work on uniform vector bundles on rational homogeneous spaces of Picard number one, i.e. generalized Grassmannians. Let $G$ be a simple Lie group and $D=\{1,2,\ldots,n\}$ be the set of nodes of the Dynkin diagram $\mathcal{D}$ of $G$. Denote by $P_k$ the parabolic subgroup of $G$ corresponds to the node $k$. Consider the generalized Grassmannian $\mathcal{G}=G/P_k$ or, for brevity, $\mathcal{D}/P_k$. Denote by $\mathcal{M}:=G/P_{N(k)}$ the generalized flag manifold defined by the marked Dynkin diagram $(\mathcal{D},N(k))$ and by $\mathcal{U}:=G/P_{k,N(k)}=G/(P_{N(k)}\cap P_k)$ the universal family, which has a natural $\mathbb{P}^{1}$-bundle structure over $\mathcal{M}$, i.e. we have the natural diagram
\begin{align}
	\xymatrix{
		\mathcal{U}\ar[d]^{p}   \ar[r]^-{q} & \mathcal{M}\\
		\mathcal{G}=G/P_k.
	}
\end{align}	
Remarkably, $\mathcal{M}$ defined above is indeed an unsplit family of rational curves on $\mathcal{G}$. Given $x\in \mathcal{G}$, $\mathcal{M}_x=q(p^{-1}(x))$, which we call \emph{the special family} of lines of class $\check{\alpha_k}$ through $x$, coincides with $H/P_{N(k)}$ by Remark \ref{vmrt}, where the set of nodes of the Dynkin diagram $H$ is $D(H)=D\backslash {k}$.

When $k$ is an extremal node, that is, the subdiagram $\mathcal{D}(H)$ is connected. Remarkably, in the case $\mathcal{D}=D_n$, i.e. $G=SO(2n)$, since $G/P_{n-1}\cong G/P_n$, we only need to think about the extremal node $n$. Similarly, since $E_6/P_1\cong E_6/P_6$, we just consider the the extremal node $1$ in $E_6$. According to Theorem \ref{lines2}, $\mathcal{M}_x$ has the following possibilities:
\begin{itemize}
	\item Projective spaces or smooth quadrics,
	\item Grassmannians,
	\item Spinor varities,
	\item $E_6/P_6$, $E_7/P_7$, $C_3/P_3$.
\end{itemize}
The possibilities are list in Table \ref{t1} below.

\begin{table}[htbp]
	\caption{$\mathcal{M}_x$ corresponding to an extremal node} \label{t1}
	\centering
	\begin{tabular}{|c|c|c|c|c|c|c|c|}
		\hline
		\diagbox{$node$}{$\mathcal{M}_x$}{$\mathcal{D}$}& $A_n$&$B_n$&$C_n$&$D_n$&$E_n(n=6,7,8)$&$F_n(n=4)$&$G_n(n=2)$\\
		\hline
		$1$&$\mathbb{P}_{n-1}$ & $Q_{2n-3}$ & $\mathbb{P}_{2n-3}$&$Q_{2n-4}$&$\mathcal{S}_{n-2}$&$C_3/P_3$&$\mathbb{P}_1$ \\
		\hline
		$n$&$\mathbb{P}_{n-1}$ & $\mathbb{P}_{n-1}$ & $\mathbb{P}_{n-1}$&$G(2,n)$&$E_{n-1}/P_{n-1}(n\neq 6)$&$\mathcal{S}_3$&$\mathbb{P}_1$ \\
		\hline
		$2$& &  & &&$G(3,n)$&&\\
		\hline
	\end{tabular}
\end{table}

We observe that for $x\in \mathcal{G}$, the morphism from $\mathcal{M}_x$ to Grassmannian plays a critical role in determining whether a uniform vector bundle can split as a direct sum of line bundles. Let $\varsigma$ be a positive integer smaller than or equal to $\text{dim}~\mathcal{M}_x$. As long as we show that the morphism $\mathcal{M}_x\rightarrow G(t,\varsigma)$ can only be constant for any integer $1\le t\le [\frac{\varsigma}{2}]$ and every $x\in \mathcal{G}$, then every uniform $r$-bundle on $\mathcal{G}$ splits for $r\le \varsigma$. We suggest that interested readers refer to Theorem 3.1 in paper \cite{M-O-C2} for details. Now, let's analyze the morphism $\mathcal{M}_x\rightarrow G(t,\varsigma)$ one by one according to the probabilities of $\mathcal{M}_x$.
\\
\\
Case \uppercase\expandafter{\romannumeral 1}. When $\mathcal{M}_x$ is a projective space $\mathbb{P}^N$ or a smooth quadric $Q^N~(N=2m+1)$, then their Chow rings have the form
$$\mathbb{Z}[\mathcal{H}]/(\mathcal{H}^{N+1}),$$
where $\mathcal{H}$ is a hyperplane section. In particular, $\text{dim} H^{2t}(\mathcal{M}_x,\mathbb{C})=1$ for every $t\le [\frac{N}{2}]$. By the proof of Lemma 3.4 in paper \cite{M-O-C2}, the only morphisms $\mathcal{M}_x\rightarrow G(t,N)$ are constant for any integer $1\le t\le [\frac{N}{2}]$.

When $\mathcal{M}_x$ is a smooth quadric $Q^N~(N=2m)$, since
$$A(Q^{2m})=\mathbb{Z}[\mathcal{H},\mathcal{U}]/(\mathcal{H}^{2m+1}, 2\mathcal{H}\mathcal{U}-\mathcal{H}^{m+1},\mathcal{H}^m\mathcal{U}-\mathcal{U}^2),$$
where $\mathcal{H}$ is a hyperplane section and $\mathcal{U}$ is a subvariety of codimension $m$, then we get that $\mathcal{M}_x\rightarrow G(t,N-1)$  can only be constant map similarly.
\\
\\
Case \uppercase\expandafter{\romannumeral 2}. $\mathcal{M}_x$ is Grassmannian $G(d,n)~(2\le d\le n-d)$. We claim that the only morphisms $G(d,n)\rightarrow G(t,n-d+1)$ are constant for any integer $1\le t\le [\frac{n-d+1}{2}]$.
\begin{lemma}
There are no nonconstant maps from $G(d,n)~(2\le d\le n-d)$ to $G(t,n-d+1)$ for any integer $1\le t\le [\frac{n-d+1}{2}]$.
\end{lemma}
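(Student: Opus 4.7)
The plan is to argue by contradiction. Suppose $f\colon G(d,n)\to G(t,N)$ is non-constant, where $N=n-d+1$. Pulling back the tautological sequence $0\to S\to \mathcal{O}^N\to Q\to 0$ from $G(t,N)$ produces on $G(d,n)$ a globally generated rank-$(N-t)$ quotient $f^*Q$ of $\mathcal{O}^N$. Since $\mathrm{Pic}(G(d,n))=\mathbb{Z}\langle H\rangle$ with $H$ the Plücker class, and $c_1(Q)$ is the ample generator of $\mathrm{Pic}(G(t,N))$, one has $c_1(f^*Q)=kH$ for some integer $k\ge 0$, and non-constancy is equivalent to $k\ge 1$. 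The goal is to force $k=0$.

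I would proceed by restricting $f$ to the natural family of projective subspaces $\mathbb{P}^{n-d}=\{V\colon V_{d-1}\subset V\}\subset G(d,n)$ indexed by $(d-1)$-planes $V_{d-1}\subset\mathbb{C}^n$; on each such $\mathbb{P}^{n-d}$ the Plücker embedding is linear, so the restricted map satisfies $f|_{\mathbb{P}^{n-d}}^*\mathcal{O}_{G(t,N)}(1)=\mathcal{O}_{\mathbb{P}^{n-d}}(k)$. The Chow ring of $\mathbb{P}^{n-d}$ has one-dimensional graded pieces, so the pulled-back Chern classes $c_i(f|^*S)$ and $c_i(f|^*Q)$ are scalar multiples of $H^i$. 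The Whitney identity $c(f|^*S)\cdot c(f|^*Q)=1$ together with the rank truncations $c_j(f|^*S)=0$ for $j>t$ and $c_j(f|^*Q)=0$ for $j>N-t$ then becomes a polynomial system in these scalars. This is exactly the setup of Lemma~3.4 in \cite{M-O-C2} used in Case~\uppercase\expandafter{\romannumeral 1}; under $t\le\lfloor N/2\rfloor$ the system forces $k=0$ by a Schur-polynomial (equivalently Newton-identity) argument, since the truncation of $c(f|^*S)^{-1}$ at degree $N-t\ge\lceil N/2\rceil$ over-determines the system once the rank-$t$ truncation of $c(f|^*S)$ is imposed. Since the family of these $\mathbb{P}^{n-d}$'s sweeps $G(d,n)$ and any two points of $G(d,n)$ are joined by a chain of lines each lying in some such subspace, constancy on every $\mathbb{P}^{n-d}$ propagates to the global constancy of $f$, contradicting $k\ge 1$.

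The main obstacle will be that the source dimension $n-d$ of $\mathbb{P}^{n-d}$ and the target parameter $N=n-d+1$ differ by one, so the conclusion of Case~\uppercase\expandafter{\romannumeral 1} does not follow verbatim; the key computation to close this gap is to verify that the resulting scalar system remains over-determined throughout the range $t\le\lfloor(n-d+1)/2\rfloor$, even in the borderline odd case $t=(n-d+1)/2$. A secondary subtlety is to ensure that the Whitney-relation analysis is carried out consistently across the family of $\mathbb{P}^{n-d}$'s, so that the locally-constant value $f(\mathbb{P}^{n-d})$ varies continuously in $V_{d-1}$ and glues to a global constant. If needed, the argument can be symmetrized by simultaneously considering the dual family $\mathbb{P}^d=\{V\colon V\subset V_{d+1}\}$, which together with the $\mathbb{P}^{n-d}$'s provides redundant cover and Whitney-relation constraints.
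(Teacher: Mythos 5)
Your overall frame (pull back the tautological sequence, restrict to the maximal linear subspaces $\mathbb{P}^{n-d}$, propagate constancy by chain-connectedness) matches the paper's starting point, but the step you yourself flag as ``the main obstacle'' is a genuine gap, and the fix you propose cannot work. On a single $\mathbb{P}^{m}$ with $m=n-d$, the Whitney relation $(1+c_1+\cdots+c_t)(1+d_1+\cdots+d_{N-t})=1$ in $\mathbb{Z}[h]/(h^{m+1})$ with $N=m+1$ is \emph{not} over-determined: the product of the two polynomials has degree at most $t+(N-t)=m+1$, so the relation only constrains the coefficients in degrees $1,\dots,m$ and leaves the degree-$(m+1)$ coefficient, which is exactly $c_t d_{N-t}$, completely free. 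Concretely, for $t=1$ one may take $c_1=k$ arbitrary and $d_i=(-k)^i$; this reflects the fact that nonconstant maps $\mathbb{P}^{m}\to G(t,m+1)$ genuinely exist in your range of $t$ (the identity $\mathbb{P}^m\to G(1,m+1)$, for instance), so no argument confined to the Chow ring of one $\mathbb{P}^{n-d}$ can prove constancy there. Lemma 3.4 of \cite{M-O-C2} applies to targets $G(t,s)$ with $s\le m$, and the shift from $s=m$ to $s=m+1$ is precisely where it breaks; asserting that the scalar system ``remains over-determined'' in this borderline case is false. The dual family of $\mathbb{P}^{d}$'s only yields a relation modulo $h^{d+1}$, which is weaker still and does not repair this.

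What the paper actually does is combine the local restriction with a \emph{global} piece of information on $G(d,n)$ that your proposal never invokes. The degree-$(n-d+1)$ component of the Whitney relation on all of $G(d,n)$ gives $C_t\cdot D_{n-d+1-t}=0$, and this is a genuine vanishing of a class of codimension $n-d+1<d(n-d)=\dim G(d,n)$ --- here the hypothesis $d\ge 2$ is essential. One then expands $(-1)^tC_t$ and $D_{n-d+1-t}$ in the Schubert basis with non-negative coefficients (nefness of $H_t^{\vee}$ and of the quotient bundle), and Pieri/Giambelli show that restricting to $\mathbb{P}^{n-d}=\Sigma_{n-d,\dots,n-d,0}$ picks out exactly the coefficients $x_{t,0,\dots,0}$ and $y_{n-d+1-t,0,\dots,0}$. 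Nonconstancy of the restriction forces $c_t\neq 0$ and $d_{n-d+1-t}\neq 0$ (the polynomial-factorization argument you allude to does give this much), hence those two coefficients are nonzero; Littlewood--Richardson positivity then makes $C_t\cdot D_{n-d+1-t}$ a non-negative combination of Schubert classes in which $\sigma_t\cdot\sigma_{n-d+1-t}$ appears with strictly positive coefficient, contradicting the global vanishing. Without this interplay between the global vanishing on $G(d,n)$ and Schubert positivity, your argument does not close.
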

\begin{proof}
Assume that we have a nonconstant morphism $\phi : =G(d,n)\rightarrow G(t,n-d+1)$. Then there exists a maximal linear subspace $\mathbb{P}_{n-d}$ such that $\phi$ restricts to it is also nonconstant. Denote by $\psi$ to be the restriction map. Let's consider $\psi^{\ast}H_t,\psi^{\ast}Q_{n-d+1-t}$ and $\phi^{\ast}H_t,\phi^{\ast}Q_{n-d+1-t}$(the pull back of universal bundle $H_t$ and universal  quotient bundle $Q_{n-d+1-t}$ under $\psi,\phi$). Denote by $c_1,\ldots,c_t$ and $d_1,\ldots,d_{n-d+1-t}$ the Chern classes of $\psi^{\ast}H_t$ and $\psi^{\ast}Q_{n-d+1-t}$, respectively, and by $C_1,\ldots,C_t$ and $D_1,\ldots,D_{n-d+1-t}$ the Chern classes of $\phi^{\ast}H_t$ and $\phi^{\ast}Q_{n-d+1-t}$, respectively.

On $G(d,n)~(2\le d\le n-d)$, we have an exact sequence\[
0\rightarrow\phi^{\ast}H_t\rightarrow\mathcal{O}^{\oplus n-d+1}_{G(d,n)}\rightarrow\phi^{\ast}Q_{n-d+1-t}\rightarrow 0
\]
which is the pull back of the universal exact sequence \[0\rightarrow H_t\rightarrow\mathcal{O}^{\oplus n-d+1}_{G(t,n-d+1-t)}\rightarrow Q_{n-d+1-t}\rightarrow 0\] on $G(t,n-d+1)$.
Then \[
c(\phi^{\ast}H_t)\cdot c(\phi^{\ast}Q_{n-d+1-t})=1,
\]
i.e. \[(1+C_1+\cdots+C_t)\cdot(1+D_1+\cdots+D_{n-d+1-t})=1.\]
 Since $n-d+1<d(n-d)=\text{dim} G(d,n)$, obviously we have $C_t\cdot D_{n-d+1-t}=0$.

On the other hand, on $\mathbb{P}_{n-d}\subset G(d,n)$, we also have an exact sequence\[
0\rightarrow\psi^{\ast}H_t\rightarrow\mathcal{O}^{\oplus n-d+1}_{\mathbb{P}_{n-d}}\rightarrow\psi^{\ast}Q_{n-d+1-t}\rightarrow 0.
\]
Then \[
c(\psi^{\ast}H_t)\cdot c(\psi^{\ast}Q_{n-d+1-t})=1,
\]
i.e. \[(1+c_1+\cdots+c_t)\cdot(1+d_1+\cdots+d_{n-d+1-t})=1.\] Combining the above equation and the $\psi$ nonconstant assumption, i.e. \[c_1=\text{deg} \psi^{\ast}\mathcal{O}_{G(t,n-d+1)}(1)\neq 0,\] we get $c_t\neq 0$ and $d_{n-d+1-t}\neq 0$.

In order to show that this contradicts that $C_t\cdot D_{n-d+1-t}=0$, we need to use some Schubert classes in $G(d,n)$ (see \cite{Da} Chapter 4). Let's review some basic facts and fix some notation first. Choose a complete flag $\mathcal{V}$ in $\mathbb{C}^{n}$, that is, a nested sequence of subspaces
$$0\subset V_1\subset\cdots\subset V_{n-d}\subset V_{n-d}=\mathbb{C}^{n}$$
with dim $V_i=i$. For a sequence $a=(a_1,\ldots,a_d)$ with $n-d\ge a_1\ge\ldots\ge a_d\ge 0$, we define the Schubert cycle $\Sigma_a(\mathcal{V})\subset G(d,n)$ to be the closed subset
$$ \Sigma_a(\mathcal{V})=\{\Lambda\in G(d,n)|\text{dim}(V_{n-d+i-a_i}\cap \Lambda\ge i)~ \text{for all}~ i\}.$$
The class $\sigma_a:=[\Sigma_a(\mathcal{V})]$ ia called Schubert class. By Theorem 4.1 in \cite{Da}, $\sigma_a$ is of codimension $a_1+\cdots+a_d$. To simplify notation, we generally suppress trailing zeros in the indices and write $\sigma_{a_1,\ldots,a_s}$ in place of $\sigma_{a_1,\ldots,a_s,0,\ldots,0}$. With this notation, $\mathbb{P}_{n-d}$ can be represented as $\sigma_{n-d,\ldots,n-d}$ and we may write
$$(-1)^{t}C_t=\sum_{a_1+\cdots+a_d=t}x_{a_1,\ldots,a_d}\sigma_{a_1,\ldots,a_d} ~ \text{and}~ D_{n-d+1-t}=\sum_{a_1+\cdots+a_d=n-d+1-t}y_{a_1,\ldots,a_d}\sigma_{a_1,\ldots,a_d},$$
where the ${x_{a_1,\ldots,a_d}}'$s and ${y_{a_1,\ldots,a_d}}'$s are non negative integers by the nefness of $H_t^{\vee}$ (the dual of $H_t$) and $Q_{n-d+1-t}$.

Pieri's formula (see Proposition 4.9 in \cite{Da}) tells us that for any integers $b_1,\ldots,b_d$ with $b_1+\cdots+b_d=l\le n-d$,
$$\sigma_{b_1}\cdots\sigma_{b_d}\cap \mathbb{P}_{n-d}=\sigma_{b_1}\cdots\sigma_{b_d}\cdot\sigma_{n-d,\ldots,n-d,0}=\sigma_{n-d,\ldots,n-d,l}.$$
Using Giambelli's formula (see Proposition 4.16 in \cite{Da}) for $\sigma_{a_1,\ldots,a_d}$'s, expanding the determinants and then intersecting with $\sigma_{n-d,\ldots,n-d,0}$, we can immediately get the following identities:
\begin{align*}
c_t&=C_t\cap\mathbb{P}_{n-d}\\
&=(-1)^tx_{t,0,\ldots,0}\sigma_t\cdot\sigma_{n-d,\ldots,n-d,0}\\
&=(-1)^tx_{t,0,\ldots,0}\sigma_{n-d,\ldots,n-d,t};\\
d_{n-d+1-t}&=D_{n-d+1-t}\cap\mathbb{P}_{n-d}\\
&=y_{n-d+1-t,0,\ldots,0}\sigma_{n-d+1-t}\cdot\sigma_{n-d,\ldots,n-d,0}\\
&=y_{n-d+1-t,0,\ldots,0}\sigma_{n-d,\ldots,n-d,n-d+1-t}.
\end{align*}
Hence $x_{t,0,\ldots,0}y_{n-d+1-t,0,\ldots,0}\neq 0$. But
$$0=C_t\cdot D_{n-d+1-t}=x_{t,0,\ldots,0}y_{n-d+1-t,0,\ldots,0}\sigma_t\cdot \sigma_{n-d+1-t}+\ldots,$$
where the summation is linear combination of Schubert cycles with non-negative coefficients by the Littlewood-Richardson formula. Therefore $$x_{t,0,\ldots,0}y_{n-d+1-t,0,\ldots,0}=0,$$ a contradiction.
\end{proof}
When $\mathcal{G}=D_n/P_n$, $M_x=G(2,n)$. By the above Lemma, we obtain the only morphisms $G(2,n)\rightarrow G(t,n-1)$ are constant for any integer $1\le t\le [\frac{n-1}{2}]$.

When $\mathcal{G}=E_n/P_2~(n=6,7,8)$, $M_x=G(3,n)~(n=6,7,8)$. By the above Lemma, we obtain the only morphisms $G(3,n)\rightarrow G(t,n-2)~(n=6,7,8)$ are constant for any integer $1\le t\le [\frac{n-2}{2}]$. Remarkably, in these cases, the value of $\varsigma$ can be appropriately enlarged. Since $G(3,n)~(n=6,7,8)$ is $3(n-3)$ dimensional, we can easily know
$$\text{dim}(G(3,6))=9>\text{dim}G(t,5),$$
$$\text{dim}(G(3,7))=12>\text{dim}G(t,6) $$ and
$$\text{dim}(G(3,8))=15>\text{dim}G(t,7) ~\text{for all}~ t\ge 1.$$
Since the Picard number of $G(3,n)~(n=6,7,8)$ is one, the only morphisms $$G(3,6)\rightarrow G(t,5),$$ $$G(3,7)\rightarrow G(t,6)$$ and $$G(3,8)\rightarrow G(t,7)$$ are all constant for any integer $t\ge 1$.\\

 Case \uppercase\expandafter{\romannumeral 3}. $\mathcal{M}_x$ is spinor variety $\mathcal{S}_n~(n=3,4,5,6)$. The Chow ring of $\mathcal{S}_n$ is presented as a quotient of $\mathbb{Z}[X_1,\ldots,X_n]$ module the relations
 $$X_s^2+2\sum_{i=1}^{s-1}(-1)^i X_{s+i}X_{s-i}+(-1)^s X_{2s}=0$$
for $1\le s\le n$, where $X_j^{,}s$ are the Schubert classes of codimension $j$, $X_0=1$ and $X_j=0$ for $j<0$ or $j>n$ (see Section 3.2 in \cite{Ha}). In particular, $\text{dim} H^{2t}(\mathcal{M}_x,\mathbb{C})=1$ for every $t\le [\frac{5}{2}]$. Hence, the only morphisms $\mathcal{M}_x\rightarrow G(t,5)$ are constant for any integer $1\le t\le [\frac{5}{2}]$. Remarkably, for $n=4,5,6$, the value of $\varsigma$ can be appropriately enlarged. Due to the dimension of $\mathcal{S}_n$ is $\frac{n(n+1)}{2}$, one can check that
$$\text{dim}(\mathcal{S}_4)=10>\text{dim}G(t,6),$$
$$\text{dim}(\mathcal{S}_5)=15>\text{dim}G(t,7) $$ and
$$\text{dim}(\mathcal{S}_6)=21>\text{dim}G(t,9) ~\text{for all}~ t\ge 1.$$
Since the Picard number of $\mathcal{S}_n$ is one, the only morphisms $$\mathcal{S}_4\rightarrow G(t,6),$$ $$\mathcal{S}_5\rightarrow G(t,7)$$ and $$\mathcal{S}_6\rightarrow G(t,9)$$ are all constant for any integer $t\ge 1$.
\\
\\
Case \uppercase\expandafter{\romannumeral 4}. $\mathcal{M}_x=E_6/P_6$. The Chow ring of $E_6/P_6$ have the following form (see \cite{HD} Theorem 5). Let $y_1,y_4$ be the Schubert classes on $E_6/P_6$. Then
$$A(E_6/P_6)=\mathbb{Z}[y_1,y_4]/(r_9,r_{12}),$$
where $$r_9=2y_1^9+3y_1y_4^2-6y_1^5y_4;$$
$$r_{12}=y_4^3-6y_1^4y_4^2+y_1^{12}.$$
\begin{lemma}
	There are no nonconstant maps from $E_6/P_6$ to $G(t,10)$ for any integer $1\le t\le 5$.
\end{lemma}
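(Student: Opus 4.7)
The plan is to mimic the preceding Grassmannian lemma, replacing Schubert calculus by the Chow ring presentation $A^{*}(E_6/P_6;\mathbb{Q}) = \mathbb{Q}[y_1,y_4]/(r_9,r_{12})$. Suppose for contradiction that $\phi\colon E_6/P_6\to G(t,10)$ is nonconstant for some $1\le t\le 5$. Pulling back the tautological sequence yields Chern classes $C_i=c_i(\phi^{*}H_t)$ and $D_j=c_j(\phi^{*}Q_{10-t})$; the Whitney identity $c(\phi^{*}H_t)\cdot c(\phi^{*}Q_{10-t})=1$ has only a single contribution in codimension $10$ (since $C_i=0$ for $i>t$ and $D_j=0$ for $j>10-t$), giving
$$C_t\cdot D_{10-t}=0\quad\text{in }A^{10}(E_6/P_6).$$
Because $E_6/P_6$ has Picard number one and $\phi$ is nonconstant, $-C_1=ky_1$ for some $k\in\mathbb{Z}_{\ge 1}$. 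Ampleness of $y_1$ on this $16$-dimensional variety yields $y_1^j\ne 0$ for $0\le j\le 16$; in particular $A^{10}$ has basis $\{y_1^{10},y_1^6 y_4\}$ after using $y_1\cdot r_9=0$, which gives $y_1^2 y_4^2=2y_1^6 y_4-\tfrac{2}{3}y_1^{10}$.

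For $1\le t\le 3$, each $A^i$ with $i\le t$ is one-dimensional, so $C_i=\alpha_i y_1^i$ for scalars $\alpha_i\in\mathbb{Q}$ with $\alpha_1=-k$, and by the Whitney recursion $D_j=\beta_j y_1^j$ for some $\beta_j\in\mathbb{Q}$. The rank vanishings $D_j=0$ in $A^j$ for $j>10-t$, combined with $y_1^j\ne 0$, force $\beta_j=0$ for all $j>10-t$, so $R(s):=\sum_{j\ge 0}\beta_j s^j$ is a polynomial of degree $\le 10-t$. Setting $P(s):=1+\sum_{i=1}^t\alpha_i s^i$, the Whitney identity gives $\sum_{i+j=n}\alpha_i\beta_j=0$ for $1\le n\le 16$, that is, $P(s)R(s)\equiv 1\pmod{s^{17}}$. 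Since $\deg(PR)\le 10<17$, this forces the polynomial identity $P(s)R(s)=1$, so both $P$ and $R$ must be the constant $1$, whence $k=-\alpha_1=0$, a contradiction.

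For $t\in\{4,5\}$ the class $C_t$ acquires a $y_4$-component and the argument is the same in spirit but more delicate. I would expand $C_t$ and $D_{10-t}$ in the monomial bases of $A^t$ and $A^{10-t}$, substitute into $C_t\cdot D_{10-t}=0$, and apply the quadratic identity above to collect coefficients in the basis $\{y_1^{10},y_1^6 y_4\}$ of $A^{10}$. The two resulting scalar equations, combined with the scalar equations extracted from the rank vanishings $D_j=0$ for $j>10-t$ expanded in the appropriate monomial bases, form an overdetermined polynomial system whose only solution has $k=0$. The main obstacle will be tracking the $y_4$-contributions to each $D_j$ for $j\ge 4$---each acquires a $y_1^{j-4}y_4$-component, and for $j\ge 8$ also a $y_4^2$-component---and applying the relations $r_9,r_{12}$ to reduce to the fixed monomial bases.
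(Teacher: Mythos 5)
Your argument for $1\le t\le 3$ is complete and correct: it is in substance the same one-dimensional-cohomology argument that the paper invokes by citing Lemma 3.4 of \cite{M-O-C2}, and your self-contained polynomial-identity version ($P(s)R(s)=1$ in $\mathbb{Q}[s]$, hence $P=R=1$) is fine, given that $y_1^n\neq 0$ for $n\le 16$ and that the Whitney recursion keeps every $D_j$ in $\mathbb{Q}\,y_1^j$.

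The gap is in the cases $t=4$ and $t=5$, which are precisely the cases that carry the content of this lemma. There you do not give a proof: you describe a plan (``expand in monomial bases, substitute, collect coefficients'') and then simply assert that the resulting ``overdetermined polynomial system'' has no solution with $k\neq 0$. That assertion is the whole point and is not verified; nothing in what you wrote rules out, a priori, a nonconstant $\phi$ whose Chern classes have genuinely nonzero $y_4$-components conspiring to satisfy all the equations. Moreover, the route you propose (feeding everything into the single top relation $C_t\cdot D_{10-t}=0$ after reducing modulo $r_9$) is both heavier and less transparent than necessary. The paper's mechanism is to kill the $y_4$-components directly, degree by degree, using the freeness of the relevant graded pieces: in $A^4$ the coefficient of $y_4$ in the Whitney identity gives $\widetilde{a_4}+\widetilde{b_4}=0$, and in $A^8$ (free on $y_1^8, y_1^4y_4, y_4^2$) the coefficient of $y_4^2$ gives $\widetilde{a_4}\widetilde{b_4}=0$, whence $\widetilde{a_4}=\widetilde{b_4}=0$; for $t=5$ one repeats this with the coefficients of $y_1y_4$ in $A^5$ and of $y_1^2y_4$ in $A^6$, using $a_1=-b_1$, to get $\widetilde{a_5}=\widetilde{b_5}=0$. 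Once all $y_4$-components vanish, every Chern class lies in $\mathbb{Q}[y_1]$ and your $t\le 3$ argument applies verbatim. Note also that no reduction modulo $r_9$ is needed at all in this scheme, so the rational (non-integral) relation $y_1^2y_4^2=2y_1^6y_4-\tfrac{2}{3}y_1^{10}$ that you planned to use never enters. To make your proof complete you must either carry out your system explicitly or, better, replace that step by the sum-and-product argument above.
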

\begin{proof}
(i). $1\le t\le 3$. Since $\text{dim} H^{2t}(\mathcal{M}_x,\mathbb{C})=1$, by the proof of Lemma 3.4 in paper \cite{M-O-C2}, the only morphisms $E_6/P_6\rightarrow G(t,10)$ are constant.

(ii). $t=4$. Let $\phi$ be a morphism from $E_6/P_6$ to $G(4,10)$. On $E_6/P_6$, we have an exact sequence\[
0\rightarrow\phi^{\ast}H_4\rightarrow\mathcal{O}^{\oplus 10}_{G(d,n)}\rightarrow\phi^{\ast}Q_6\rightarrow 0.
\]
Then $c(\phi^{\ast}H_4)\cdot c(\phi^{\ast}Q_6)=1$. According to the Chow ring of $E_6/P_6$, we can expand the equation into the following form:
\begin{align*}
(1+a_1y_1+a_2y_1^2+a_3y_1^3+&a_4y_1^4+\widetilde{a_4}y_4)\cdot(1+b_1y_1+b_2y_1^2+\\
&b_3y_1^3+b_4y_1^4+\widetilde{b_4}y_4+b_5y_1^5+\widetilde{b_5}y_1y_4+b_6y_1^6+\widetilde{b_6}y_1^2y_4)=1.
\end{align*}
Since $A^4(E_6/P_6)$ is freely generated by the classes $y_1^4,y_4$, the above equation implies that the coefficient of $y_4$ is $0$, i.e. $\widetilde{a_4}+\widetilde{b_4}=0$. On the other hand, $A^8(E_6/P_6)$ is freely generated by the classes $y_1^8,y_1^4y_4,y_4^2$, the above equation implies that the coefficient of $y_4^2$ is also zero, i.e. $\widetilde{a_4}\cdot\widetilde{b_4}=0$. Hence $\widetilde{a_4}=\widetilde{b_4}=0$. Therefore, this case can boil down to case (i).

(iii). $t=5$. By iterating the previous process, we get $c(\phi^{\ast}H_5)\cdot c(\phi^{\ast}Q_5)=1$, i.e.
\begin{align*}
(1+a_1y_1+a_2y_1^2+a_3y_1^3&+a_4y_1^4+\widetilde{a_4}y_4+a_5y_1^5+\widetilde{a_5}y_1y_4)\\
\cdot&(1+b_1y_1+b_2y_1^2+b_3y_1^3+b_4y_1^4+\widetilde{b_4}y_4+b_5y_1^5+\widetilde{b_5}y_1y_4)=1.
\end{align*}
In a similar way, we can prove $\widetilde{a_4}=\widetilde{b_4}=0$. Next, let's consider the vanishing of $\widetilde{a_5}$. Since $A^5(E_6/P_6)$ is freely generated by the classes $y_1^5,y_1y_4$, the above equation implies that the coefficient of $y_1y_4$ is $0$, i.e. $\widetilde{a_5}+\widetilde{b_5}=0$. On the other hand, $A^6(E_6/P_6)$ is freely generated by the classes $y_1^6,y_1^2y_4$, the above equation implies that the coefficient of $y_1^2y_4$ is also zero, i.e. $a_1\widetilde{b_5}+b_1\widetilde{a_5}=0$. Combining these equations with $a_1=-b_1$, we obtain that $\widetilde{a_5}=\widetilde{b_5}=0$. Therefore, this case can boil down to case (i).
\end{proof}

Case \uppercase\expandafter{\romannumeral 5}. $\mathcal{M}_x=E_7/P_7$. The Chow ring of $E_7/P_7$ have the following form (see \cite{HD} Theorem 6). Let $y_1,y_5,y_9$ be the Schubert classes on $E_7/P_7$. Then
$$A(E_7/P_7)=\mathbb{Z}[y_1,y_5,y_9]/(r_{10},r_{14},r_{18}),$$
where $$r_{10}=y_5^2-2y_1y_9;$$
$$r_{14}=2y_5y_9-9y_1^4y_5^2+6y_1^9y_5-y_1^{14};$$
$$r_18=y_9^2+10y_1^3y_5^3-9y_1^8y_5^2+2y_1^{13}y_5.$$

\begin{lemma}
	There are no nonconstant maps from $E_7/P_7$ to $G(t,13)$ for any integer $1\le t\le 6$.
\end{lemma}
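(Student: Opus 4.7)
The plan is to mirror the strategy used for the $E_6/P_6$ case but adapt it to the richer Chow ring $A^{\bullet}(E_7/P_7)$. Fix a morphism $\phi: E_7/P_7 \to G(t,13)$ and pull back the tautological exact sequence to obtain
\[
c(\phi^{\ast}H_t)\cdot c(\phi^{\ast}Q_{13-t})=1.
\]
First I would dispose of the easy range $1\le t\le 4$. Since the generators of $A^{\bullet}(E_7/P_7)$ sit in degrees $1,5,9$, the graded pieces $A^k(E_7/P_7)$ are one-dimensional (spanned by $y_1^k$) for $k\le 4$. Therefore the hypothesis of Lemma 3.4 in \cite{M-O-C2} holds, and that lemma immediately yields that $\phi$ is constant.

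For $t=5$, the only Chern-class degree that sees a second generator is degree $5$, where $A^5$ is freely generated by $y_1^5$ and $y_5$. Write $C_k=a_k y_1^k$ for $k\le 4$, $C_5=a_5 y_1^5+\widetilde{a}_5 y_5$, and analogously $D_k=b_k y_1^k$ for $k\le 4$, $D_5=b_5 y_1^5+\widetilde{b}_5 y_5$, continuing the $D_j$ expansion with a $y_5$-summand up to $j=8$. Extracting the coefficient of $y_5$ from the degree-$5$ part of the product gives $\widetilde{a}_5+\widetilde{b}_5=0$. The decisive step uses the relation $y_5^2=2y_1 y_9$ coming from $r_{10}$, which makes $y_1y_9$ an independent class in $A^{10}$; extracting its coefficient from the degree-$10$ part of the product yields $2\widetilde{a}_5\widetilde{b}_5=0$. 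These two equations force $\widetilde{a}_5=\widetilde{b}_5=0$, so all Chern classes become polynomials in $y_1$ alone and Lemma 3.4 of \cite{M-O-C2} applies as before.

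For $t=6$ I would carry out the same bookkeeping one step further. The Chern classes now run through degree $6$ in $C_k$ and degree $7$ in $D_k$, with $A^6$ and $A^7$ each two-dimensional (bases $\{y_1^6,\,y_1 y_5\}$ and $\{y_1^7,\,y_1^2 y_5\}$). The argument of the previous paragraph first eliminates $\widetilde{a}_5$ and $\widetilde{b}_5$. Then the degree-$6$ equation gives $\widetilde{a}_6+\widetilde{b}_6=0$, while at degree $12$ the coefficient of the independent class $y_1^3 y_9$ (arising from $y_1^2 y_5^2 = 2y_1^3 y_9$) contributed by $C_5 D_7+C_6 D_6$ reduces, after using $\widetilde{a}_5=0$, to $2\widetilde{a}_6\widetilde{b}_6=0$; hence $\widetilde{a}_6=\widetilde{b}_6=0$. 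Finally, since $C_7=0$, the degree-$7$ coefficient of $y_1^2 y_5$ in the product is just $\widetilde{b}_7$, giving $\widetilde{b}_7=0$. All Chern classes again lie in $\mathbb{Z}[y_1]$, and Lemma 3.4 of \cite{M-O-C2} concludes the proof.

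The expected main obstacle is purely bookkeeping: one must cleanly maintain a distinguished basis of each graded piece of $A^{\bullet}(E_7/P_7)$ modulo $r_{10}$, so that the "new-generator" coefficients whose squares are supposed to vanish really do correspond to an independent class after substitution. The underlying trick—exploiting $y_5^2=2y_1 y_9$ to turn products of $y_5$-terms into contributions along a linearly independent direction—is exactly the one that worked for $E_6/P_6$. One should also verify that the relations $r_{14}$ and $r_{18}$ do not contaminate the degrees we inspect; as both live in degrees $\ge 14$, this is automatic for $t\le 6$.
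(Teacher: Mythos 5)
Your proposal is correct and follows essentially the same route as the paper: it disposes of $1\le t\le 4$ via the one-dimensionality of $A^k(E_7/P_7)$ for $k\le 4$ together with Lemma 3.4 of \cite{M-O-C2}, and for $t=5,6$ it kills the coefficients of the new generators $y_5$ and $y_1y_5$ by comparing the degree $5$ and $10$ (resp.\ $6$ and $12$) pieces of the Whitney-sum identity. The only cosmetic differences are that you substitute $y_5^2=2y_1y_9$ explicitly where the paper treats $y_5^2$ itself as a basis class of $A^{10}$, and that you explicitly eliminate $\widetilde{b}_7$ at the end of the $t=6$ case, a step the paper leaves implicit.
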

\begin{proof}
	(i). $1\le t\le 4$. Since $\text{dim} H^{2t}(\mathcal{M}_x,\mathbb{C})=1$, by the proof of Lemma 3.4 in paper \cite{M-O-C2}, the only morphisms $E_7/P_7\rightarrow G(t,13)$ are constant.
	
	(ii). $t=5$. Let $\phi$ be a morphism from $E_7/P_7$ to $G(5,13)$. On $E_7/P_7$, we have an exact sequence\[
	0\rightarrow\phi^{\ast}H_5\rightarrow\mathcal{O}^{\oplus 13}_{G(d,n)}\rightarrow\phi^{\ast}Q_8\rightarrow 0.
	\]
	Then $c(\phi^{\ast}H_5)\cdot c(\phi^{\ast}Q_8)=1$. According to the Chow ring of $E_7/P_7$, we can expand the equation into the following form:
	\begin{align*}
	(1+a_1y_1+&a_2y_1^2+a_3y_1^3+a_4y_1^4+a_5y_1^5+\widetilde{a_5}y_5)\cdot(1+b_1y_1+b_2y_1^2+b_3y_1^3\\
&+b_4y_1^4+b_5y_1^5+\widetilde{b_5}y_5+b_6y_1^6+\widetilde{b_6}y_1y_5+b_7y_1^7+\widetilde{b_7}y_1^2y_5+b_8y_1^8+\widetilde{b_8}y_1^3y_5)=1.
	\end{align*}
	Since $A^5(E_7/P_7)$ is freely generated by the classes $y_1^5,y_5$, the above equation implies that the coefficient of $y_5$ is $0$, i.e. $\widetilde{a_5}+\widetilde{b_5}=0$. On the other hand, $A^{10}(E_7/P_7)$ is freely generated by the classes $y_1^{10},y_1^5y_5,y_5^2$, the above equation implies that the coefficient of $y_5^2$ is also zero, i.e. $\widetilde{a_5}\widetilde{b_5}=0$. Hence $\widetilde{a_5}=\widetilde{b_5}=0$. Therefore, this case can boil down to case (i).
	
	(iii). $t=6$. By iterating the previous process, we get $c(\phi^{\ast}H_6)\cdot c(\phi^{\ast}Q_7)=1$, i.e.
	\begin{align*}	(1+a_1y_1+a_2y_1^2&+a_3y_1^3+a_4y_1^4+a_5y_1^5+\widetilde{a_5}y_5+a_6y_1^6+\widetilde{a_6}y_1y_5)\cdot(1+b_1y_1+\\
&b_2y_1^2+b_3y_1^3+b_4y_1^4+b_5y_1^5+\widetilde{b_5}y_5+b_6y_1^6+\widetilde{b_6}y_1y_5+b_7y_1^7+\widetilde{b_7}y_1^2y_5)=1.
	\end{align*}
	In a similar way, we can prove $\widetilde{a_5}=\widetilde{b_5}=0$. Next, let's consider the vanishing of $\widetilde{a_6}$. Since $A^6(E_7/P_7)$ is freely generated by the classes $y_1^6,y_1y_5$, the above equation implies that the coefficient of $y_1y_5$ is $0$, i.e. $\widetilde{a_6}+\widetilde{b_6}=0$. On the other hand, $A^{12}(E_7/P_7)$ is freely generated by the classes $y_1^{12},y_1^2y_5^2,y_1^7y_5$, the above equation implies that the coefficient of $y_1^2y_5^2$ is also zero, i.e. $\widetilde{a_6}\widetilde{b_6}=0$. Hence, $\widetilde{a_6}=\widetilde{b_6}=0$. Therefore, this case can boil down to case (i).
\end{proof}

Case \uppercase\expandafter{\romannumeral 6}. $\mathcal{M}_x=C_3/P_3$. The Chow ring of $C_3/P_3$ is $$A(C_3/P_3)=\mathbb{Z}[y_1,y_3]/(y_1^4-8y_1y_3,x_3^2),$$
where $y_1,y_3$ are the Schubert classes on $C_3/P_3$ (see Section 3.1 in \cite{Ha}). Since $\text{dim} H^{2t}(C_3/P_3,\mathbb{C})=1$ for $1\le t\le 2$, the only morphisms $C_3/P_3\rightarrow G(t,5)$ are constant for any integer $1\le t\le 2$.

Summing up, we have obtained the following Table \ref{bound}.

\begin{table}[htbp]
	\caption{$\varsigma(\mathcal{G})$} \label{bound}
	\begin{center}
	\begin{tabular}{|c|c|c|c|c|c|c|c|c|c|}
		\hline
		$\mathcal{G}$ &\tabincell{c}{$A_n/P_1$\\$A_n/P_n$}  & \tabincell{c}{$B_n/P_1$\\$B_n/P_n$} &  \tabincell{c}{$C_n/P_1$\\$C_n/P_n$} &
  \tabincell{c}{$D_n/P_1$\\$D_n/P_{n-1}$\\$D_n/P_n$} &
 \tabincell{c}{$E_6/P_1$\\$E_7/P_1$\\$E_8/P_1$} & \tabincell{c}{$E_6/P_2$\\$E_7/P_2$\\$E_8/P_2$}  &\tabincell{c}{$E_6/P_6$\\$E_7/P_7$\\$E_8/P_8$}  &\tabincell{c}{$F_4/P_1$\\$F_4/P_4$} & \tabincell{c}{$G_2/P_1$\\$G_2/P_2$}\\
		\hline
		\hline
		$\varsigma(\mathcal{G})$ & \tabincell{c}{$n-1$\\$n-1$} &\tabincell{c}{$2n-3$\\ $n-1$}& \tabincell{c}{$2n-2$\\$n-1$} & \tabincell{c}{$2n-5$\\$n-1$\\$n-1$ } &\tabincell{c}{6\\7\\9}&\tabincell{c}{5\\6\\7} &\tabincell{c}{6\\10\\13}& \tabincell{c}{5\\5} & \tabincell{c}{3\\1}\\
		\hline
	\end{tabular}
\end{center}
\end{table}

\begin{thm}\label{GGS}
Let $E$ be a uniform $r$-bundle on a generalized Grassmannian $\mathcal{G}$ with extremal node marked. If $r\le \varsigma(\mathcal{G})$, then $E$ splits as a direct sum of line bundles.
\end{thm}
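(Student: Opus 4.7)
The plan is to follow the strategy used in \cite{M-O-C2}~Theorem 3.1, systematically substituting the morphism-constancy results of Cases I--VI above for the \emph{ad hoc} rank bounds appearing there. The argument proceeds in four steps: build a relative flag filtration of $p^*E$ on the universal family $\mathcal{U}$; restrict to fibres of $p$ to obtain classifying morphisms $\mathcal{M}_x \to G(t,r)$; invoke the case analysis to force those morphisms to be constant; and finally descend the filtration to $\mathcal{G}$ and split it.

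Concretely, because $E$ is uniform of splitting type $\underline{a} = (a_1,\ldots,a_r)$ with $a_1 \ge \cdots \ge a_r$, the relative Harder--Narasimhan filtration of $p^*E$ with respect to the $\mathbb{P}^1$-bundle $q\colon \mathcal{U}\to \mathcal{M}$ yields a chain of subbundles
$$
0 = K_0 \subset K_1 \subset \cdots \subset K_s = p^*E
$$
such that at each jump $a_{t_j} > a_{t_j+1}$ one has $K_j|_L \cong \bigoplus_{i \le t_j}\mathcal{O}_L(a_i)$ on every line $L = q^{-1}([L])$. Restricting this filtration to a fibre $p^{-1}(x) \cong \mathcal{M}_x$, on which $p^*E$ is the trivial bundle $E_x \otimes_{\mathbb{C}} \mathcal{O}_{\mathcal{M}_x}$ of rank $r$, produces classifying morphisms $\varphi_j\colon \mathcal{M}_x \to G(t_j, r)$; by passing to duals we may always assume $t_j \le [r/2]$.

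Since $r \le \varsigma(\mathcal{G})$, Cases I--VI above force each $\varphi_j$ to be constant, so $K_j|_{p^{-1}(x)}$ corresponds to a fixed subspace $V_j(x) \subset E_x$. Applying the Descent Lemma to the smooth projection $p$, whose fibres $\mathcal{M}_x$ are simply-connected rational homogeneous spaces, I descend $K_\bullet$ to a filtration
$$
0 = F_0 \subset F_1 \subset \cdots \subset F_s = E
$$
of $E$ on $\mathcal{G}$, each graded piece $F_j/F_{j-1}$ being uniform of constant splitting type $(c_j,\ldots,c_j)$ on every line, where $c_j = a_{t_{j-1}+1} = \cdots = a_{t_j}$. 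Twisting by $\mathcal{O}_{\mathcal{G}}(-c_j)$ produces a uniform bundle with zero splitting type, which by running the same descent argument with the trivial filtration must itself be trivial; hence each graded piece is a direct sum of copies of $\mathcal{O}_{\mathcal{G}}(c_j)$.

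Finally the filtration splits, because Borel--Weil--Bott gives $H^1(\mathcal{G}, \mathcal{O}_{\mathcal{G}}(k)) = 0$ for every $k \in \mathbb{Z}$ on a generalised Grassmannian, so the successive extension classes in $\operatorname{Ext}^1(F_j/F_{j-1}, F_{j-1})$ all vanish. I expect the main technical obstacle to lie in the second and third steps: one must verify that each $\varphi_j$ really is an algebraic morphism into the Grassmannian (and not merely a set-theoretic map of fibres), and that fibrewise constancy can be promoted to algebraic variation of the subspaces $V_j(x)$ over $\mathcal{G}$ through the Descent Lemma. The flag construction, the reduction of constant-splitting-type graded pieces to direct sums of a fixed line bundle, and the final extension vanishing are then comparatively routine.
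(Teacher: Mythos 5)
Your proposal is correct and takes essentially the same route as the paper: the whole content of the theorem is the constancy of the classifying morphisms $\mathcal{M}_x \to G(t,\varsigma(\mathcal{G}))$ established in Cases I--VI, after which the relative-filtration, descent and splitting steps are exactly the machinery of Mu\~{n}oz--Occhetta--Sol\'{a} Conde (Theorem 3.1 of \cite{M-O-C2}) that the paper simply cites and that you flesh out (compare also the descent carried out in Proposition \ref{notsame} and the triviality argument of Proposition \ref{polyuni}). The only nitpick is that $H^1(\mathcal{G},\mathcal{O}_{\mathcal{G}}(k))$ need not vanish for \emph{every} $k\in\mathbb{Z}$, but your extension classes involve only the strictly positive twists $c_i-c_j$ with $i<j$, for which the vanishing does hold, so the final splitting step is unaffected.
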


\begin{rem}\label{rem}
One can find the value of $\varsigma(\mathcal{G})$ is equal to or bigger than the value of $\varsigma$ that we analyzed such that the morphism $\mathcal{M}_x\rightarrow G(t,\varsigma)$ can only be constant. $\varsigma(\mathcal{G})$ is equal to $\varsigma$ except $\mathcal{G}=C_n/P_1$ and $G_2/P_1$. The reason is that uniform $2n-2$ bundles split on $C_n/P_1\cong\mathbb{P}_{2n-1}$ and uniform $3$ bundles split on $G_2/P_1\cong Q^5$ by the previous arguments. In all cases, however, the morphism $\mathcal{M}_x\rightarrow G(t,\varsigma(\mathcal{G})-2)$ can only be constant.
\end{rem}

\begin{cor}
If $\mathcal{G}$ is a generalized Grassmannian with marked point $k$, $k$ is not extremal, then $\mathcal{M}_x$ is a product of rational homogeneous spaces. If $$\mathcal{M}_x=A_1\times\cdots\times A_t~(2\le t\le 3)$$ and $$r\le \varsigma(\mathcal{G}):=\text{min}\{\varsigma(A_1'), \ldots,\varsigma(A_t')\},$$ where $A_i~(1\le i\le t)$ is the special family of lines of class $\check{\alpha_k}$ through $x$ on the generalized Grassmannian $A_i'$ with extremal node marked. Then uniform $r$-bundle $E$ on $\mathcal{G}$ splits as a direct sum of line bundles.
\end{cor}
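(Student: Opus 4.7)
The plan is to reduce the corollary to the extremal-node case already handled in Theorem \ref{GGS} by exploiting the product structure of $\mathcal{M}_x = A_1\times\cdots\times A_t$. Following the general strategy laid out in the paragraph preceding Theorem \ref{GGS}, it suffices to show that every morphism
\[
\phi\colon \mathcal{M}_x = A_1\times\cdots\times A_t \longrightarrow G(s,r)
\]
is constant, for every $1\le s\le [r/2]$ and every $x\in\mathcal{G}$. Once this constancy is in place, the standard argument (push forward filtered pieces of $E$ along the universal $\mathbb{P}^1$-bundle $\mathcal{U}\to\mathcal{M}$, recognize each quotient as a bundle on $\mathcal{M}$ classified by a map $\mathcal{M}_x\to G(s,r)$, and invoke the Descent Lemma) produces the desired splitting of $E$ into line bundles.

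The heart of the proof is therefore a factor-by-factor analysis of $\phi$. First I would fix, for each index $i$, a tuple of points $a_{-i}:=(a_j)_{j\neq i}\in \prod_{j\neq i} A_j$, and consider the restriction
\[
\phi_i\colon A_i\cong A_1\times\cdots\times A_{i-1}\times A_i\times A_{i+1}\times\cdots\times A_t\big|_{a_{-i}} \longrightarrow G(s,r).
\]
By hypothesis, $A_i$ equals the special family $\mathcal{M}_{x_i}$ of lines of class $\check{\alpha}_k$ through a point on the generalized Grassmannian $A_i'$, whose marked node is extremal. Since
\[
r\le \varsigma(\mathcal{G})=\min\{\varsigma(A_1'),\ldots,\varsigma(A_t')\}\le \varsigma(A_i'),
\]
the case-by-case Chow-ring/dimension analysis (Cases I--VI) carried out in the proof of Theorem \ref{GGS} applies verbatim to $\phi_i$ and forces it to be constant.

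Next I would assemble these constancies. A morphism out of a product that is constant on every ``slice'' $A_i\times\{a_{-i}\}$ (for every choice of the complementary coordinates) factors through the projection away from $A_i$; running this argument for each $i$ in turn shows that $\phi$ itself must be constant. Combined with the preceding paragraph, this verifies the criterion for splitting, and the corollary follows.

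\emph{Anticipated obstacle.} The bookkeeping is the main worry rather than any deep new input. One has to verify that each factor $A_i$ appearing in the product decomposition of $\mathcal{M}_x$ really does arise as the ``$\mathcal{M}_{x_i}$'' of a bona fide extremal-node generalized Grassmannian $A_i'$, which is the content of the hypothesis and is dictated by the combinatorics of the connected components of $\mathcal{D}(H)\setminus N(k)$ from Theorem \ref{lines2}. A secondary subtlety, flagged by Remark \ref{rem}, is that in the borderline cases $C_n/P_1$ and $G_2/P_1$ the honest bound for target Grassmannians is $\varsigma(A_i')-2$; but this is already absorbed into the definition of $\varsigma(\mathcal{G})$ as a minimum, so it poses no additional difficulty. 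The restriction $2\le t\le 3$ imposes no essential new issue since the factor-by-factor reduction is independent of the number of factors; it merely reflects the fact that removing a single node from a simple Dynkin diagram can leave at most three connected components.
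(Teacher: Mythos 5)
Your proposal follows essentially the same route as the paper: both reduce the splitting statement to the constancy of every morphism $\mathcal{M}_x=A_1\times\cdots\times A_t\to G(s,r)$, restrict to each factor $A_i$ (where constancy follows from $r\le\varsigma(A_i')$ and the case analysis behind Theorem \ref{GGS}), and conclude that the full map is constant. Your write-up is if anything slightly more careful than the paper's one-line argument, since you make explicit that constancy is needed on every slice $A_i\times\{a_{-i}\}$ and you spell out why a morphism from a product that is constant on all such slices is constant.
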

\begin{proof}
In order to prove $E$ splits, we just need to prove that the only morphisms $\mathcal{M}_x\rightarrow G(t,r)$ are constant. Let $\phi$ be a morphism from $\mathcal{M}_x$ to $G(t,r)$. Because every $A_i~(1\le i\le t)$ can be regarded as the subspace of $\mathcal{M}_x$ corresponding to the special family of lines of class $\check{\alpha_k}$ through $x$ on $A_i'$, we can consider the restriction of $\phi$ to $A_i~(1\le i\le t)$. By assumption $r\le \text{min}\{\varsigma(A_1'), \ldots,\varsigma(A_t')\}$, all the restriction maps are constant. Hence, $\phi$ is also constant.
\end{proof}
\begin{rem}
\emph{\begin{itemize}
\item For the case where $\mathcal{M}_x=\mathbb{P}^1$ or $\mathcal{M}_x=\mathbb{P}^1\times\cdots$, we can say nothing about the splitting result according to our theorem.
\item In some cases, the value of $\varsigma(\mathcal{G})$ cannot be expanded anymore, which means that there exist uniform but nonsplitting $\varsigma(\mathcal{G})+1$-bundles. For instance, Grassmannian $A_{n-1}/P_d=G(d,n)~(d\le n-d)$ has uniform but nonsplitting $d$-bundle $H_d$ (the universal bundle of $G(d,n)$); Spinor variety $D_{n+1}/P_{n+1}=\mathcal{S}_n$ has uniform but nonsplitting $n+1$-bundle $Q_{n+1}$ (the universal quotient bundle of $\mathcal{S}_n$).
\item Compare to the main theorem (Theorem 3.1) in \cite{M-O-C2}, we improve their results. In particular, we enlarge the splitting threshold for uniform bundles on Hermitian symmetric spaces $E_6/P_6$ from $\ge 5$ to $\ge 6$ and $E_7/P_7$ from $\ge 7$ to $\ge 10$ (see Table 1 in \cite{M-O-C2}).
\end{itemize}}
\end{rem}

\begin{cor}\label{d}
Let $\mathcal{G}$ be a generalized Grassmannian covered by linear projective subspaces of dimension $2$ and $E$ be an $r$-bundle on $\mathcal{G}$. If $E$ splits as a direct sum of line bundles when it restricts to every $\mathbb{P}^2\subseteq \mathcal{G}$,  then $E$ splits as a direct sum of line bundles on $\mathcal{G}$.
\end{cor}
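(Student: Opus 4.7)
The plan is to bootstrap the fibrewise splitting on the covering family of $\mathbb{P}^2$'s into a global splitting on $\mathcal{G}$ by building a canonical maximal subbundle plane by plane, descending it to $\mathcal{G}$ via the universal family, and then inducting on the rank.

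First I would establish that $E$ is uniform on $\mathcal{G}$. Every line $L$ of the special family on $\mathcal{G}$ is contained in some $\mathbb{P}^2$ of the covering family: the hypothesis provides such a plane through each point, and the homogeneity of $\mathcal{G}$ together with transitivity on incident configurations ensures that every line lies in such a plane. Hence $E|_L=\bigoplus_i\mathcal{O}_L(a_i)$ inherits its splitting type from $E|_{\mathbb{P}^2}$, and by connectedness of the parameter variety of the covering family (a union of homogeneous spaces, by Theorem \ref{hdp}), the splitting type $\underline{a}=(a_1,\ldots,a_r)$ is a global constant.

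Next, group the type into distinct values $a_1>a_2>\cdots>a_s$ with multiplicities $m_1,\ldots,m_s$. On each plane $\mathbb{P}^2$ in the family, the image of the evaluation map $H^0(E|_{\mathbb{P}^2}(-a_1))\otimes\mathcal{O}(a_1)\to E|_{\mathbb{P}^2}$ is canonically the maximal subbundle $\mathcal{O}(a_1)^{\oplus m_1}$, independent of any choice. Letting $\mathcal{F}\xleftarrow{q}\mathcal{Z}\xrightarrow{p}\mathcal{G}$ denote the universal diagram of covering planes, cohomology and base change applied to $p^{\ast}E\otimes\mathcal{O}_{\mathcal{Z}/\mathcal{F}}(-a_1)$ yield a rank $m_1$ subbundle $\widetilde{F}_1\subset p^{\ast}E$ whose restriction to every fibre of $q$ coincides with this canonical subbundle.

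Then, by the Descent Lemma (the same tool the authors employ for the Grauert--M\"ulich--Barth analogues), $\widetilde{F}_1$ descends to a subbundle $F_1\subset E$ on $\mathcal{G}$, provided its restriction to each $p$-fibre is constant; this is where the homogeneity of the family of covering planes through a fixed point is invoked. The resulting $F_1$ has pure splitting type $(a_1,\ldots,a_1)$ on every $\mathbb{P}^2$, and a direct argument (either iterating the construction, or observing that $F_1(-a_1)$ is trivial on every $\mathbb{P}^2$ and thus identifying it via $q_{\ast}p^{\ast}$) exhibits $F_1$ as a direct sum of $m_1$ copies of a single line bundle on $\mathcal{G}$. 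The quotient $E/F_1$ still satisfies the hypothesis on every $\mathbb{P}^2$ of the family, so induction on the rank decomposes $E/F_1$ into line bundles; finally, the extension $0\to F_1\to E\to E/F_1\to 0$ splits because $\operatorname{Ext}^1(E/F_1,F_1)$ vanishes after restriction to every plane of the family (all summands of $E/F_1$ being of the form $\mathcal{O}(a_j)$ with $a_j<a_1$), and this fibrewise vanishing lifts via the same descent diagram.

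The principal obstacle in this plan is the descent step: verifying that the plane-wise canonical subbundle is $p$-fibre constant reduces to checking that the isotropy group at $x\in\mathcal{G}$ acts transitively on the family of covering $\mathbb{P}^2$'s through $x$, a statement that ultimately follows from the marked-Dynkin-diagram description in Theorem \ref{hdp} but must be traced through the cases. Once this is secured, the induction on rank and the Ext-vanishing that finishes the proof are routine.
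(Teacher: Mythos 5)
Your overall architecture (establish uniformity from the planes, extract the canonical maximal-degree subbundle, descend it, induct on rank, split the resulting extension) matches the paper's, but the step you yourself single out as the principal obstacle is resolved incorrectly, and this is a genuine gap. You claim that fibrewise constancy of the plane-wise canonical subbundle over the fibres of $\mathcal{Z}\to\mathcal{G}$ "reduces to checking that the isotropy group at $x\in\mathcal{G}$ acts transitively on the family of covering $\mathbb{P}^2$'s through $x$." Transitivity of the isotropy action on the parameter space of planes through $x$ gives you nothing here, because $E$ is an arbitrary bundle, not a homogeneous one: the assignment $Z\mapsto (F_1(Z))_x\subset E_x$ is canonical given $E$ but is not equivariant for the isotropy group, so the group can permute the planes without constraining the subspaces. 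The paper's mechanism is different and is the actual content of the proof: it works with the universal family of \emph{lines} $\mathcal{U}=G/(P_k\cap P_{N(k)})$ and the classifying map $\varphi:\mathcal{M}_x\to\mathbb{G}_k(t-1,\mathbb{P}_k(E^\vee_x))$; for each plane $Z\ni x$ the splitting $E|_Z\cong\bigoplus\mathcal{O}_Z(a_i)$ forces $\varphi$ to be constant on the $\mathbb{P}^1\subset\mathcal{M}_x$ of tangent directions to $Z$ (the line-wise subspace at $x$ is the fibre at $x$ of the canonical subbundle of $E|_Z$, independent of the chosen line $L\subset Z$), and then chain-connectivity of $\mathcal{M}_x$ by these $\mathbb{P}^1$'s propagates constancy. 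So the hypothesis on planes is what replaces the rank bound $r\le\varsigma(\mathcal{G})$ used elsewhere; no appeal to transitivity on planes is made or needed. Your proposal must be repaired by replacing the group-theoretic reduction with this connectivity argument (which works equally well in your planes-based setup).

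A secondary weakness: your closing step asserts that $\operatorname{Ext}^1(E/F_1,F_1)$ vanishes because it vanishes after restriction to every plane and "this fibrewise vanishing lifts via the same descent diagram." Restriction-wise vanishing of $\operatorname{Ext}^1$ on members of a covering family does not yield global vanishing by any descent mechanism. The paper's route is cleaner: by induction both $M$ and $N$ are already direct sums of line bundles, the twists $N^\vee\otimes M$ restrict to positive degree on lines, and one invokes the global vanishing $H^1(\mathcal{G},N^\vee\otimes M)=0$ for such line bundles on a generalized Grassmannian. You should do the same rather than attempt to globalize the fibrewise $\operatorname{Ext}$ computation.
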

\begin{proof}
If $\mathcal{G}$ is a projective space, then the result holds (\cite{O-S-S} Theorem 2.3.2). Suppose $\mathcal{G}=G/P_k$ which is not a projective space and $k$ is the unique black node, where $G$ is a simple Lie group and $P_k$ is a maximal parabolic subgroup of $G$. The condition implies that $E$ is uniform.
The reason for this is the fact every line $L$ is contained in two different $\mathbb{P}^2$ by Theorem \ref{hdp}.

We prove the corollary by induction on $r$. If we have the exact sequence of vector bundles
\begin{align}\label{a}
0\rightarrow M\rightarrow E\rightarrow N\rightarrow 0
\end{align}	
on $\mathcal{G}$,
where the rank of $M$ and $N$ are smaller than $r$,
such that \[
M|_Z=\bigoplus\limits_{i=1}^{r-t} \mathcal{O}_Z(a_{t+i}),N|_Z=\mathcal{O}_Z^{\oplus t},
\]
 for every $Z\simeq \mathbb{P}^2$, then by the induction hypothesis, $M$ and $N$ split. Since $H^1(\mathcal{G},N^{\vee}\otimes M)=0$, the above exact sequence splits and hence also $E$.

Similar to the proof of Theorem 3.3 in \cite{D-F-G}, on $\mathcal{U}=G/(P_k\cap P_{N(k)})$, we can obtain an exact sequence
\[
0\rightarrow\widetilde{M}\rightarrow {q_1}^{\ast}E\rightarrow\widetilde{N}\rightarrow0.	\]
If we prove that the morphism $\varphi$ is constant for every $x\in \mathcal{G}$, then there exist two bundles $M$, $N$ over $G$ with $\widetilde{M}={q_1}^{\ast}M,\widetilde{N}={q_1}^{\ast}N$. By projecting the bundle sequence\[
0\rightarrow {q_1}^{\ast}M\rightarrow {q_1}^{\ast}E\rightarrow {q_1}^{\ast}N\rightarrow0
\]
onto $\mathcal{G}$, we can get the desired exact sequence (\ref{a}).
Thus, to prove the existence of the above exact sequence, it suffices to show that the map\[
 \varphi:\mathcal{M}_x\rightarrow\mathbb{G}_k(t-1,\mathbb{P}_k(E^{\vee}_x))
 \]
 is constant for every $x\in\mathcal{G}$ .
 Given a projective subspace $Z$ of dimension $2$ and a line $L\subseteq Z$, we take any point $x\in L$ and denote by $Z'$ the subspace of $\mathcal{M}_x$ corresponding to the tangent directions to $Z$ at $x$. By the hypothesis, $E|Z$ is a direct sum of line bundles, so
 \[
  \varphi|_{Z'}:Z'\rightarrow\mathbb{G}_k(t-1,\mathbb{P}_k(E^{\vee}_x))
 \]
 is constant. Since $\mathcal{G}$ covered by linear projective subspaces of dimension $2$ and $M_x$ is chain-connected by $\mathbb{P}^1$, $\varphi$ is constant for every $x\in \mathcal{G}$.
\end{proof}


\subsection{Uniform vector bundles on rational homogeneous spaces}
Let \[X=G/P\simeq G_1/P_{I_1}\times G_2/P_{I_2}\times \cdots \times G_m/P_{I_m}, \] where $G_i$ is a simple Lie group with \emph{Dynkin diagram} $\mathcal{D}_i$ whose set of nodes is $D_i$ and $P_{I_i}$ is a parabolic subgroup of $G_i$ corresponding to $I_i\subset D_i$. We set $F(I_i):=G_i/P_{I_i}$ by marking on the Dynkin diagram $\mathcal{D}_i$ of $G_i$ the nodes corresponding to $I_i$. Let $\delta_i$ be a node in $\mathcal{D}_i$ and $N(\delta_i)$ be the set of nodes in $\mathcal{D}_i$ that are connected to $\delta_i$.

If $\delta_i\in I_i$, we call $$\mathcal{M}_i^{\delta_i^c}:=G_i/P_i^{\delta_i^c}\times \widehat{G_i/P_{I_i}}~ (1\le i\le m),$$ the \emph{$i$-th special family} of lines of class $\check{\delta}_i$ by Theorem \ref{lines}, where $P_i^{\delta_i^c}:=P_{I_i\backslash \delta_i\cup N(\delta_i)}$ and $\widehat{G_i/P_{I_i}}$ is $G_1/P_{I_1}\times G_2/P_{I_2}\times \cdots \times G_m/P_{I_m}$ by deleting $i$-th term $G_i/P_{I_i}$. For $i=1$ and $\delta\in I$, we will use the notation $\mathcal{M}^{\delta}$ to denote the special family of lines of class $\check{\delta}$.

For $x\in X$, we call
\[\text{SPVMRT}_x^{(\delta_i)}=\{L\in \mathcal{M}_i^{\delta_i^c}|x\in L\}\] the \emph{$\delta_i$-th special part of variety of minimal rational tangents at $x$} (sometimes we just write $\text{SPVMRT}_x$ if there is no confusion).

Fix $\delta_i\in I_i$. $\text{SPVMRT}_x^{(\delta_i)}$ is just the special family of lines of class $\check{\delta}_i$ through $x$ on the generalized Grassmannian $\mathcal{G}^{\delta_i}$ whose Dynkin diagram $\mathcal{D}^{\delta_i}$ is the maximal sub-diagram of $(\mathcal{D}_i,I_i)$ with the only marked point $\delta_i$. Denote $\nu(X, \delta_i):=\varsigma(\mathcal{G}^{\delta_i})$.
Let\[\nu(X):=\text{min}_{i}\{\text{min}_{\delta_i\in I_i}\{\nu(X, \delta_i)\}\}.\]

\begin{definition}
 A vector bundle $E$ on $X$ is called poly-uniform with respect to $\mathcal{M}_i^{\delta_i^c}$ for every $i~(1\le i\le m)$ and every $\delta_i\in I_i$ if the restriction of $E$ to every line in $\mathcal{M}_i^{\delta_i^c}$ has the same splitting type.
 We also call that $E$ poly-uniform with respect to all the special families of lines.
\end{definition}

Let $\mathscr{F}$ be a torsion free coherent sheaf of rank $r$ over $X$. Fix integer $i~(1\le i \le m)$ and $\delta_i\in I_i$.
Since the singularity set $S(\mathscr{F})$ of $\mathscr{F}$ has codimension at least $2$, there are lines $L\in \mathcal{M}_i^{\delta_i^c}$ which do not meet $S(\mathscr{F})$. If
\[\mathscr{F}|L\cong \mathcal{O}_{L}(a_1^{(\delta_i)})\oplus\cdots\oplus \mathcal{O}_{L}(a_r^{(\delta_i)}).\]
Let \[c_1^{(\delta_i)}(\mathscr{F})=a_1^{(\delta_i)}+\cdots+a_r^{(\delta_i)},\] which is independent of the choice of $L$.
We set
\[\mu^{(\delta_i)}(\mathscr{F})=\frac{c_1^{(\delta_i)}(\mathscr{F})}{\text{rk}(\mathscr{F})}.\]

\begin{definition}\label{i}
A torsion free coherent sheaf $\mathscr{E}$ over $X$ is $\delta_i$-semistable ($\delta_i$-stable) if for every coherent subsheaf $ \mathscr{F}\subseteq \mathcal{E}$ with $0< \text{rk}(\mathscr{F})< \text{rk}(\mathcal{E})$, we have
\[\mu^{(\delta_i)}(\mathscr{F})\le~ (<)~ \mu^{(\delta_i)}(\mathcal{E}).\]
If $E$ is not $\delta_i$-semistable, then we call $E$ is $\delta_i$-unstable.
\end{definition}

\begin{prop}\label{notsame}
Fix integer $i~(1\le i \le m)$ and $\delta_i\in I_i$. Let $E$ be a uniform $r$-bundle on $X$ of type $(a_1^{(\delta_i)}, \ldots ,a_r^{(\delta_i)}),~a_1^{(\delta_i)}\le \ldots \le a_r^{(\delta_i)}$ with respect to $\mathcal{M}_i^{\delta_i^c}$. If $r\le \nu(X,\delta_i)-2$ and these $a_j^{(\delta_i)}$'s are not all same, then $E$ can be expressed as an extension of uniform bundles with respect to $\mathcal{M}_i^{\delta_i^c}$. In particular, $E$ is $\delta_i$-unstable.
	\end{prop}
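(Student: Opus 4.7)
The plan is to produce a non-trivial subbundle $M \subset E$ that is again uniform with respect to $\mathcal{M}_i^{\delta_i^c}$ and whose slope strictly exceeds $\mu^{(\delta_i)}(E)$; this will simultaneously exhibit $E$ as an extension of uniform bundles and certify its $\delta_i$-instability. The construction is carried out on the incidence variety $\mathcal{U}_i^{\delta_i^c}$, where the $\mathbb{P}^1$-bundle structure $q_2$ allows a relative Birkhoff--Grothendieck construction, and the other projection $q_1$ provides the descent back to $X$. Concretely, I pull $E$ back to $q_1^* E$ on $\mathcal{U}_i^{\delta_i^c}$; writing $a_r^{(\delta_i)}$ for the top value of the splitting type and $r_s < r$ for its multiplicity, Grauert's theorem applied to $q_2$ shows that $(q_2)_*\bigl(q_1^* E\otimes \mathcal{O}_{q_2}(-a_r^{(\delta_i)})\bigr)$ is locally free of rank $r_s$, and the relative evaluation
\[
\widetilde{M} := q_2^*(q_2)_*\bigl(q_1^* E\otimes \mathcal{O}_{q_2}(-a_r^{(\delta_i)})\bigr)\otimes \mathcal{O}_{q_2}(a_r^{(\delta_i)}) \hookrightarrow q_1^* E
\]
restricts on each fiber $L$ of $q_2$ to the top slope summand $\mathcal{O}_L(a_r^{(\delta_i)})^{\oplus r_s}$. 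In Case I, $\mathcal{U}_i^{\delta_i^c} = X$ and $q_1$ is the identity, so this already produces the desired subbundle of $E$ on $X$.

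In Case II, the fiber of $q_1$ over $x \in X$ is $\text{SPVMRT}_x^{(\delta_i)}$, the special family of lines through $x$ on the generalized Grassmannian $\mathcal{G}^{\delta_i}$ whose splitting threshold is $\nu(X,\delta_i) = \varsigma(\mathcal{G}^{\delta_i})$. Restricting $\widetilde{M}$ to this fiber yields a classifying morphism $\varphi_x : \text{SPVMRT}_x^{(\delta_i)} \to G(r_s, E_x)$. If $r_s > [r/2]$ I replace the top-slope piece by the complementary quotient (equivalently, use a different step of the relative Harder--Narasimhan filtration of $q_1^* E$ along fibers of $q_2$), so I may assume $r_s \le [r/2]$. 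Under the hypothesis $r \le \nu(X,\delta_i) - 2$, the case-by-case analysis of Subsection~3.1 applied to $\mathcal{G}^{\delta_i}$ forces $\varphi_x$ to be constant for every $x$. Since $q_1$ has connected fibers, the Descent Lemma then produces a subsheaf $M \subset E$ on $X$ with $q_1^* M = \widetilde{M}$, and $N := E/M$ is torsion free.

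By construction, both $M|_L$ and $N|_L$ have constant splitting types for every line $L \in \mathcal{M}_i^{\delta_i^c}$, so $M$ and $N$ are uniform with respect to $\mathcal{M}_i^{\delta_i^c}$ and the short exact sequence $0 \to M \to E \to N \to 0$ presents $E$ as the claimed extension of uniform bundles. For $\delta_i$-instability, since the $a_j^{(\delta_i)}$'s are not all equal, the top value strictly exceeds their average, so $\mu^{(\delta_i)}(M) = a_r^{(\delta_i)} > \mu^{(\delta_i)}(E)$, contradicting the semistability inequality in the definition of $\delta_i$-semistability. The main obstacle is the constancy of $\varphi_x$: the analysis of Subsection~3.1 directly supplies constancy only in the range $1 \le t \le [\varsigma(\mathcal{G}^{\delta_i})/2]$, which I must bring $r_s$ into via the duality $G(r_s, r) \cong G(r - r_s, r)$ combined with a suitable choice of HN-step, and the margin of $-2$ in the bound $r \le \nu(X,\delta_i) - 2$ is needed precisely to absorb the two borderline cases $C_n/P_1$ and $G_2/P_1$ highlighted in Remark~3.3, where $\varsigma(\mathcal{G})$ exceeds the actual guaranteed constancy threshold by $2$.
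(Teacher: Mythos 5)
Your proposal is correct and follows essentially the same route as the paper: build the distinguished subbundle of $q_1^*E$ on the universal family via a relative direct image along the $\mathbb{P}^1$-bundle $q_2$, use the bound $r\le \nu(X,\delta_i)-2$ together with the constancy results of Subsection~3.1 (Remark~3.9) to make the classifying map on each fibre $\mathrm{SPVMRT}_x^{(\delta_i)}$ constant, descend to $X$, and compare slopes. The only (harmless) cosmetic differences are that you extract just the top-degree Harder--Narasimhan piece after a relative twist and invoke the Descent Lemma explicitly, whereas the paper twists $E$ by a line bundle on $X$ so that the bottom piece is trivial, takes $\widetilde{N}^{\vee}=q_2^*q_{2*}q_1^*(E^{\vee})$, and descends via triviality of $\widetilde{M},\widetilde{N}$ on $q_1$-fibres.
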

	\begin{proof}
After twisting with an appropriate line bundle, we can assume that $E$ has the splitting type
\[\underline{a}_E^{(\delta_i)}=(0,\ldots,0,a_{t+1}^{(\delta_i)},\ldots,a_r^{(\delta_i)}),~a_{t+i}^{(\delta_i)}>0, ~\text{for}~i=1,\ldots,r-t.
\]
with respect to $\mathcal{M}_i^{\delta_i^c}$.

Let's consider the standard diagram
\begin{align}\label{key}
\xymatrix{
	\mathcal{U}_i^{\delta_i^c}=G_i/P_{I_i\cup N(\delta_i)}\times \widehat{G_i/P_{I_i}}\ar[d]^{q_1}   \ar[r]^-{q_2} & \mathcal{M}_i^{\delta_i^c} \\
	X.
}
\end{align}
For $L\in \mathcal{M}_i^{\delta_i}$, the $q_2$-fiber
\[\widetilde{L}={q_2}^{-1}(L)=\{(x,L)\in X\times \mathcal{M}_i^{\delta_i^c}|x\in L\},\]
is mapped under $q_1$ to the line $L$ identically in $G/P$ and we have \[{q_1}^\ast E|_{\widetilde{L}}\cong E|_L.\]
For $x\in G/P$, the $q_1$-fiber ${q_1}^{-1}(x)$
is mapped isomorphically under $q_2$ to the subvariety
\[\text{SPVMRT}_x=\{L\in \mathcal{M}_i^{\delta_i^c}|x\in L\}.\]
Because \[E|_L\cong\mathcal{O}_L^{\oplus t}\oplus\bigoplus\limits_{i=1}^{r-t} \mathcal{O}_L(a_{t+i}^{(\delta_i)}),~ a_{t+i}^{(\delta_i)}>0,\]
\[h^0\left({q_2}^{-1}(L),{q_1}^\ast (E^{\vee})|_{q_2^{-1}(L)}\right)=t\] for all $L\in \mathcal{M}_i^{\delta_i^c}$.
Thus the direct image ${q_2}_{\ast}{q_1}^{\ast}(E^{\vee})$ is a vector bundle of rank $t$ over $\mathcal{M}_i^{\delta_i^c}$.
The canonical homomorphism of sheaves\[
{q_2}^{\ast}{q_2}_{\ast}{q_1}^{\ast}(E^{\vee})\rightarrow {q_1}^{\ast}(E^{\vee})
\]
makes $\widetilde{N}^{\vee}:={q_2}^{\ast}{q_2}_{\ast}{q_1}^{\ast}(E^{\vee})$ to be a subbundle of ${q_1}^{\ast}(E^{\vee})$. Because over each ${q_2}$-fiber $\widetilde{L}$, the evaluation map \[
\widetilde{N}^{\vee}|_{\widetilde{L}}=H^0(\widetilde{L},{q_1}^{\ast}(E^{\vee})|_{\widetilde{L}})\otimes_{k}\mathcal{O}_{\widetilde{L}}\rightarrow {q_1}^{\ast}(E^{\vee})|_{\widetilde{L}}
\]
identifies $\widetilde{N}^{\vee}|_{\widetilde{L}}$ with $\mathcal{O}_L^{\oplus t}\subseteq\mathcal{O}_L^{\oplus t}\oplus\bigoplus_{i=1}^{r-t} \mathcal{O}_L(-a_{t+i}^{(\delta_i)})=E^{\vee}|_L.$
Over $\mathcal{U}_i^{\delta_i^c}$ we thus obtain an exact sequence\[
0\rightarrow\widetilde{M}\rightarrow {q_1}^{\ast}E\rightarrow\widetilde{N}\rightarrow0
\]
of vector bundles, whose restriction to ${q_2}$-fibers $\widetilde{L}$ looks as follows:
$$
\xymatrix{
0\ar[r] &\widetilde{M}|_{\widetilde{L}} \ar[r] \ar[d]_\cong&	{q_1}^{\ast}E|_{\widetilde{L}} \ar[r] \ar[d]_\cong&\widetilde{N}|_{\widetilde{L}}\ar[r]\ar[d]_\cong&0\\
	0\ar[r] &\bigoplus_{i=1}^{r-t} \mathcal{O}_L(a_{t+i}^{(\delta_i)})\ar[r]  &\mathcal{O}_L^{\oplus t}\oplus\bigoplus_{i=1}^{r-t} \mathcal{O}_L(a_{t+i}^{(\delta_i)}) \ar[r] &\mathcal{O}_L^{\oplus t}\ar[r]&0.\\
	 }
$$
Because $\widetilde{N}^{\vee}$ is a subbundle of ${q_1}^{\ast}(E^{\vee})$ of rank $t$, for every point $x\in G$, it provides a morphism\[
\varphi:\text{SPVMRT}_x\rightarrow\mathbb{G}_k(t-1,\mathbb{P}_k(E^{\vee}_x)).
\]
Since $r\le \nu(X,\delta_i)-2$, morphism $\varphi$ is constant by Remark \ref{rem} and the definition of $\nu(X, \delta_i)$. It follows that $\widetilde{M}$ and $\widetilde{N}$ are trivial on all ${q_1}$-fibers. So the canonical morphisms ${q_1}^{\ast}{q_1}_{\ast}\widetilde{M}\rightarrow\widetilde{M}$ and ${q_1}^{\ast}{q_1}_{\ast}\widetilde{N}\rightarrow\widetilde{N}$ are isomorphisms. Hence there are uniform bundles $M={q_1}_{\ast}\widetilde{M}$, $N={q_1}_{\ast}\widetilde{N}$ with respect to $\mathcal{M}_i^{\delta_i^c}$ over $X$ with \[\widetilde{M}={q_1}^{\ast}M,\widetilde{N}={q_1}^{\ast}N.
\]
By projection formula and the rationality of the $q_1$ fiber SPVMRT$_x$, we project the bundle sequence\[
0\rightarrow {q_1}^{\ast}M\rightarrow {q_1}^{\ast}E\rightarrow {q_1}^{\ast}N\rightarrow0
\]
onto $X$ to get the exact sequence
\begin{align}\label{c}
0\rightarrow M\rightarrow E\rightarrow N\rightarrow 0.
\end{align}
So $\mu^{(\delta_i)}(N)>\mu^{(\delta_i)}(E)$ and thus $E$ is $\delta_i$-unstable
	\end{proof}

\begin{prop}\label{polyuni}
On $X$, if an $r$-bundle $E$ is poly-uniform with respect to all the special families of lines on $X$ such that the splitting type with respect to $\mathcal{M}_i^{\delta_i^c}$ is $(a^{(\delta_i)}, \ldots, a^{(\delta_i)})$ for each $i$ and $\delta_i$, then $E$ splits as a direct sum of line bundles.
\end{prop}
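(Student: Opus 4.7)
The strategy is first to reduce, via a twist by a suitable line bundle, to the case where the common splitting type on every special family is trivial, and then to show that such a bundle is globally trivial by pushing it down through the universal families and inducting on $\dim X$.

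\emph{Normalization.} Since $\mathrm{Pic}(X)$ is free of rank $\sum_{i}|I_i|$, generated by the fundamental line bundles $\mathcal{O}_X(\omega_{\delta})$ for $\delta\in I_1\cup\cdots\cup I_m$, and since these pair perfectly with the minimal rational curve classes $\{\check\delta\}$, one can choose $\mathcal{L}\in\mathrm{Pic}(X)$ satisfying $\mathcal{L}\cdot L=a^{(\delta_i)}$ for every $L\in\mathcal{M}_i^{\delta_i^c}$. Replacing $E$ by $E\otimes\mathcal{L}^{-1}$, the problem reduces to the following claim: if $E|_L\cong\mathcal{O}_L^{\oplus r}$ for every line in every special family, then $E\cong\mathcal{O}_X^{\oplus r}$. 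The original statement then follows by un-twisting, giving $E\cong\mathcal{L}^{\oplus r}$.

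\emph{Descent via the universal families.} Fix $(i,\delta_i)$ and consider the standard diagram with projections $q_1\colon\mathcal{U}_i^{\delta_i^c}\to X$ and $q_2\colon\mathcal{U}_i^{\delta_i^c}\to\mathcal{M}_i^{\delta_i^c}$. Since $q_1^{*}E$ is trivial on every $q_2$-fiber, cohomology and base change imply that $E_i:=(q_2)_{*}q_1^{*}E$ is locally free of rank $r$ on $\mathcal{M}_i^{\delta_i^c}$ and the canonical map $q_2^{*}E_i\to q_1^{*}E$ is an isomorphism. The $q_1$-fibers are isomorphic to $\text{SPVMRT}_x^{(\delta_i)}$, which are rational homogeneous (a single point in Case~I), so $R^{j}(q_1)_{*}\mathcal{O}=0$ for $j>0$ and $(q_1)_{*}\mathcal{O}=\mathcal{O}_X$. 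The projection formula then gives $E\cong(q_1)_{*}q_1^{*}E\cong(q_1)_{*}q_2^{*}E_i$, reducing the proof to showing $E_i\cong\mathcal{O}^{\oplus r}_{\mathcal{M}_i^{\delta_i^c}}$.

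\emph{Induction and the main obstacle.} I induct on $\dim X$, the zero-dimensional case being vacuous. For any admissible $(i,\delta_i)$ a dimension count through the diagram gives $\dim\mathcal{M}_i^{\delta_i^c}<\dim X$, so the inductive hypothesis applies in principle to $E_i$ on $\mathcal{M}_i^{\delta_i^c}=G_i/P_i^{\delta_i^c}\times\widehat{G_i/P_{I_i}}$. The main difficulty is verifying that $E_i$ inherits the trivial-type poly-uniform hypothesis: special families on $\mathcal{M}_i^{\delta_i^c}$ fall into two kinds. Those supported on the second factor $\widehat{G_i/P_{I_i}}$ have lines coinciding with corresponding special-family lines of $X$, so triviality of $E_i$ on them is immediate from $q_2^{*}E_i=q_1^{*}E$; those supported on $G_i/P_i^{\delta_i^c}$ are indexed by nodes of $(I_i\setminus\delta_i)\cup N(\delta_i)$ via Theorem~\ref{lines}, and require a careful matching with curves in $\mathcal{U}_i^{\delta_i^c}$ whose $q_1$-images lie in special families of $X$. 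Carrying out this bookkeeping, using the Landsberg--Manivel description of lines together with the product structure of $\mathcal{M}_i^{\delta_i^c}$, is the technical heart of the argument; once it is done the induction closes and $E\cong\mathcal{O}_X^{\oplus r}$ follows.
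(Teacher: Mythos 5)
There is a genuine gap, and in fact the induction you set up cannot close as stated. Your claim that ``a dimension count through the diagram gives $\dim\mathcal{M}_i^{\delta_i^c}<\dim X$'' is false in Case~II, which is the typical case: since $q_2$ is a $\mathbb{P}^1$-bundle and the $q_1$-fibers are isomorphic to $\text{SPVMRT}_x^{(\delta_i)}$, one has
\[
\dim\mathcal{M}_i^{\delta_i^c}=\dim X+\dim\text{SPVMRT}_x^{(\delta_i)}-1,
\]
which is $\ge\dim X$ whenever the special family of lines through a point is positive-dimensional, and strictly larger as soon as $\dim\text{SPVMRT}_x^{(\delta_i)}\ge 2$ (e.g.\ already for $X=\mathbb{P}^n$, $n\ge 3$, where $\mathcal{M}=G(2,n+1)$ has dimension $2n-2>n$). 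So inducting on $\dim X$ by passing from $X$ to $\mathcal{M}_i^{\delta_i^c}$ does not terminate. On top of this, you explicitly defer the ``technical heart'' of your argument --- verifying that $E_i=(q_2)_*q_1^*E$ is again poly-uniform of trivial type on $\mathcal{M}_i^{\delta_i^c}$ --- and this is precisely the step that carries all the content; for the families of lines supported on the factor $G_i/P_i^{\delta_i^c}$ there is no evident way to identify them with images of curves in $\mathcal{U}_i^{\delta_i^c}$ lying over special families of $X$, so it is not even clear the claim is accessible by the bookkeeping you describe. Your normalization step and the base-change identification $q_2^*E_i\cong q_1^*E$ are fine, but as written the proof does not go through.

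For comparison, the paper's argument goes \emph{down} rather than \emph{up}: after the same normalization it inducts on the number of marked nodes (and then on the number of simple factors), using the contraction $\pi\colon X\to X'$ obtained by unmarking a node $\delta$. The $\pi$-fibers are generalized Grassmannians $\mathcal{G}^{\delta}$, on which triviality of $E$ along the lines of class $\check\delta$ forces triviality of $E$ on the whole fiber by the known splitting criterion (\cite{A-W} Proposition 1.2); hence $E\cong\pi^*E'$ with $E'=\pi_*E$, and one checks directly that the remaining special families of lines of $X$ surject onto all special families of $X'$, so $E'$ inherits the hypothesis on a space with strictly fewer marked nodes. You may want to rebuild your argument around such a contraction, where the relevant invariant genuinely decreases.
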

\begin{proof}
After twisting with an appropriate line bundle, we can assume that $E$ is trivial on all the special families of lines on $X$. We are going to show that $E$ is trivial.

Let's first consider the case that $X$ is a generalized flag manifold, which corresponds to a connected marked Dynkin diagram $\mathcal{D}(X)$ with $l$ black nodes. We prove the lemma by induction on $l$. For $l=1$, $X$ is just a generalized Grassmannian. Then the result holds by \cite{A-W} Proposition 1.2. Suppose the assertion is true for all generalized flag manifolds with connected marked Dynkin diagram and $l'$ black nodes ($1\le l'<l$). Let's consider the natural projection
\[\pi:X\rightarrow X',\]
where $X'$ is corresponding to the marked Dynkin diagram $\mathcal{D}(X)$ by changing the first black node $\delta$ to white. It's not hard to see that every $\pi$-fiber $\pi^{-1}(x)$ is isomorphic to the generalized Grassmannian $\mathcal{G}^{\delta}$ with the only marked point $\delta$. Since the restriction of $E$ to every line in $\mathcal{M}^{\delta}$ is trivial, so is to every line in $\pi^{-1}(x)$. Thus $E$ is trivial on all $\pi$-fibers by \cite{A-W} Proposition 1.2. It follows that $E'=\pi_{*}E$ is an algebraic vector bundle of rank $r$ over $X'$ and $E\cong \pi^{*}E'$.
	
\textbf{Claim.} $E'$ is trivial on all the special families of lines on $X'$.

In fact, let $\gamma$ be a black node in $\mathcal{D}(X)$ that is different from $\delta$ and $L$ be a line in $\mathcal{M}^{\gamma}$. Then $\pi(L)$ is a line in the family of lines of $X'$. When $\gamma$ runs through all black nodes except $\delta$ and $L$ runs through all lines in $\mathcal{M}^{\gamma}$ in $X$, $\pi(L)$ also runs through all lines in all the special families of lines of $X'$. The projection $\pi$ induces an isomorphism\[
E'|_{\pi(L)}\cong \pi^{*}E'|_L\cong E|_L.
\]
We identify $L$ with $\pi(L)$. Since $E|_L$ is trivial for every line $L$ in $\mathcal{M}^{\gamma}$ by assumption, $E'|_L$ is trivial for every line $L$ in all the special families of lines of $X'$. By the induction hypothesis, $E'$ is trivial. Thus $E\cong q^{*}E'$ is trivial.

Now let's think about the general case, where the marked Dynkin diagram of $X$ is not connected.
Assume $X$ can be decomposed into a product
\[X=G/P\simeq G_1/P_{I_1}\times G_2/P_{I_2}\times \cdots \times G_m/P_{I_m}, \] where $m\ge 2$ and $G_i~(1\le i\le m)$ is a simple Lie group with connected Dynkin diagram. We prove $E$ is trivial by induction on $m$. For $m=1$, the result holds from the previous analysis. Consider the natural projection
\[f:X\rightarrow X':=G_2/P_{I_2}\times \cdots \times G_m/P_{I_m},\]
it's easy to see that every $f$-fiber $f^{-1}(x)$ is isomorphic to $G_1/P_{I_1}$. By assumption, $E$ is trivial on all the special families of lines on $f^{-1}(x)\cong G_1/P_{I_1}$. Thus $E$ is trivial on all $f$-fibers by the previous analysis. It follows that $E'=f_{*}E$ is an algebraic vector bundle of rank $r$ over $X'$ and $E\cong f^{*}E'$. Similarly, we can prove that $E'$ is trivial and thus $E$ is trivial.
\end{proof}

\begin{thm}
On $X$, if $r$-bundle $E$ is poly-uniform with respect to all the special families of lines and $r\le \nu(X)-2$, then $E$ is $\delta_i$-unstable for some $\delta_i$ ($1\le i \le m$) or $E$ splits as a direct sum of line bundles.
\end{thm}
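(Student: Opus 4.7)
The plan is to deduce the theorem directly from the two preceding propositions. Suppose $E$ is \emph{not} $\delta_i$-unstable for any $i$ and any $\delta_i \in I_i$; we aim to show that $E$ splits as a direct sum of line bundles. Fix any such pair $(i,\delta_i)$. Since $E$ is poly-uniform with respect to all the special families of lines, in particular it is uniform with respect to $\mathcal{M}_i^{\delta_i^c}$ with some splitting type $\underline{a}_E^{(\delta_i)} = (a_1^{(\delta_i)}, \ldots, a_r^{(\delta_i)})$. The hypothesis $r \le \nu(X) - 2 \le \nu(X,\delta_i) - 2$ places us exactly within the range of Proposition \ref{notsame}.

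Applying the contrapositive of Proposition \ref{notsame}: if the entries of $\underline{a}_E^{(\delta_i)}$ were not all equal, then $E$ would be $\delta_i$-unstable, contradicting our standing assumption. Hence for every such pair $(i,\delta_i)$ the splitting type with respect to $\mathcal{M}_i^{\delta_i^c}$ is constant, i.e.\ of the form $(a^{(\delta_i)}, \ldots, a^{(\delta_i)})$.

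Now $E$ satisfies the hypotheses of Proposition \ref{polyuni}: it is poly-uniform with respect to every special family, and the splitting type along $\mathcal{M}_i^{\delta_i^c}$ is constant for each $i$ and each $\delta_i \in I_i$. Therefore $E$ splits as a direct sum of line bundles, completing the dichotomy. The key conceptual point is that the two preceding propositions together cover the two mutually exclusive possibilities (non-constant splitting in some direction, versus constant splitting in every direction), so the theorem is essentially a packaging statement; the substantive work already sits in Proposition \ref{notsame} (which uses the bound $r \le \nu(X,\delta_i)-2$ and Remark \ref{rem} to force the morphism $\mathrm{SPVMRT}_x \to \mathbb{G}_k(t-1, \mathbb{P}_k(E^\vee_x))$ to be constant) and in Proposition \ref{polyuni} (which reduces splitting of $E$ to splitting on the factors via successive projections and \cite{A-W} Proposition 1.2). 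I do not anticipate any genuine obstacle beyond correctly quoting these two results.
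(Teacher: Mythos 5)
Your argument is exactly the paper's: the theorem is deduced by combining Proposition \ref{notsame} (non-constant splitting type in some direction forces $\delta_i$-instability, using $r\le\nu(X)-2\le\nu(X,\delta_i)-2$) with Proposition \ref{polyuni} (constant splitting type in every direction forces splitting), and the paper's own proof is precisely this one-line reduction. Your write-up just makes the dichotomy explicit; there is nothing to add.
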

\begin{proof}
It is obviously from Proposition \ref{notsame} and \ref{polyuni}.
\end{proof}


\section{Semistable vector bundles on rational homogeneous spaces}
 Let $G$ be a simple Lie group and $D=\{1,2,\ldots,n\}$ be the set of nodes of the \emph{Dynkin diagram $\mathcal{D}$ of $G$}. Denote by $P_k$ the parabolic subgroup of $G$ corresponding to the node $k$. Consider $X=G/P_k$ in its minimal homogeneous embedding. Denote by $\mathcal{M}:=G/P_{N(k)}$ the generalized flag manifold defined by the marked Dynkin diagram $(\mathcal{D},N(k))$ and by $\mathcal{U}:=G/P_{k,N(k)}=G/(P_{N(k)}\cap P_k)$ the universal family, which has a natural $\mathbb{P}^{1}$-bundle structure over $\mathcal{M}$, i.e. we have the natural diagram:
\begin{align}
	\xymatrix{
		\mathcal{U}\ar[d]^{p}   \ar[r]^-{q} & \mathcal{M}\\
		X=G/P_k.
	}
\end{align}	
Given $x\in X$, $\mathcal{M}_x=q(p^{-1}(x))$ coincides with $H/P_{N(k)}$ by Remark \ref{vmrt} where the set of nodes of the Dynkin diagram $H$ is $D(H)=D\backslash {k}$ .

In the above setting, we will show that the splitting type of $T_{\mathcal{U}/X}|_{q^{-1}(l)}$ take the form $(-1,\ldots,-1)$, $(-1,\ldots,-1,-2,\ldots,-2)$ or $(-3)$. Building upon this assert, we will generalize the Grauert-M$\ddot{\text{u}}$lich-Barth theorem to any rational homogeneous spaces.
\subsection{The classical simple Lie algebras}
We say $G$ is of classical type when its Dynkin diagram is of type $A_n$, $B_n$, $C_n$ or $D_n$. Because $X=G/P_k$ has clear geometric explanations, we can even write down the specific form of the relative tangent bundle $T_{\mathcal{U}/X}$. In the case of type $A_n$, the relative tangent bundle $T_{\mathcal{U}/X}$ is known (see \cite{D-F-G} Lemma 5.6) and the splitting type of $T_{\mathcal{U}/X}|_{q^{-1}(l)}$ is $(-1,\ldots,-1)$. So let's just consider the remaining three types.
\subsubsection{The marked Dynkin diagram $(B_n,k)$}
In this section, we consider the Dynkin diagram $B_n$, which corresponds to the classical Lie group $SO_{2n+1}$. Denote by $B_n/P_I:=SO_{2n+1}/P_I$ the generalized flag manifold with $I=\{i_1,\ldots,i_s\}\subseteq D$.  Let $V=\mathbb{C}^{2n+1}$ be a vector space equipped with a nondegenerate symmetric bilinear form $\mathcal{Q}$. Then $B_n/P_I$ is actually the \emph{odd Orthogonal flag manifold} $OG(i_1,\ldots,i_s;2n+1)$, which parametrizes flags
$$V_{i_1}\subset V_{i_2}\subset\cdots\subset V_{i_s}\subset V,$$
where each $V_{i_t}(1\le t\le s)$ is an $i_t$-dimensional isotropic subspace in $V$.

There is a universal flag of subbundles
$$0=H_0\subset H_{i_1}\subset \cdots\subset H_{i_s}\subset H_{i_s}^{\bot}\subset\cdots\subset H_{i_1}^{\bot}\subset \mathcal{O}_{B_n/P_I}\times V$$
on $B_n/P_I$, where $H_{i_t}^{\bot}$ is the \emph{$\mathcal{Q}$-orthogonal complement} of $H_{i_t}$, rank $H_{i_t}=i_t$ and rank $H_{i_t}^{\bot}=2n+1-i_t~(1\le t\le s)$.

(1) For $k=1$, the odd Orthogonal Grassmannian $B_n/P_1:=OG(1,2n+1)$ is just the quadric $Q^{2n-1}$. In this case, $\mathcal{M}=B_n/P_2$ and $\mathcal{U}=B_n/P_{1,2}$. We have the natural diagram
\begin{align}
\xymatrix{
	\mathcal{U}=B_n/P_{1,2}\ar[d]^{p}   \ar[r]^-{q} &\mathcal{M}=B_n/P_2\\
	X=B_n/P_1
}
\end{align}	
and $\mathcal{M}_x$ is $B_{n-1}/P_1$, i.e. the quadric $Q^{2n-3}$.
\begin{lemma}\label{b1}
	Let $\widetilde{L}=q^{-1}(l)\subset \mathcal{U}$ for $l\in \mathcal{M}$. For the relative tangent bundle $T_{\mathcal{U}/X}$, we have
	$$T_{\mathcal{U}/X}|_{\widetilde{L}}=\mathcal{O}_{\widetilde{L}}(-1)^{\oplus 2n-3}.$$
\end{lemma}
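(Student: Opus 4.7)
The plan is to identify $T_{\mathcal{U}/X}$ explicitly using the incidence geometry $\mathcal{U}=\mathbb{P}(H_2)\subset X\times\mathcal{M}$ and then restrict to the $q$-fiber $\widetilde{L}\cong\mathbb{P}^1$. The first step is to produce an exact sequence that isolates $T_{\mathcal{U}/X}$: since $p|_{\widetilde{L}}\colon\widetilde{L}\to L$ is an isomorphism onto the line $L\subset X=Q^{2n-1}$, the composite $T_{\mathcal{U}/\mathcal{M}}\hookrightarrow T_{\mathcal{U}}\xrightarrow{dp}p^*T_X$ is fibrewise injective, hence injective as sheaves. Letting $\mathcal{N}$ be its cokernel, the snake lemma applied to the two relative tangent sequences yields
\[
0\to T_{\mathcal{U}/X}\to q^*T_{\mathcal{M}}\to\mathcal{N}\to 0.
\]
On $\widetilde{L}$ the middle term becomes the trivial bundle $T_l\mathcal{M}\otimes\mathcal{O}_{\widetilde{L}}$ of rank $\dim\mathcal{M}=4n-5$, while $\mathcal{N}|_{\widetilde{L}}$ is the normal bundle $N_{L/X}\cong\mathcal{O}(1)^{\oplus(2n-3)}\oplus\mathcal{O}$ of a line in a smooth quadric. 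This already pins down the rank $2n-3$ and the degree $-(2n-3)$ of $T_{\mathcal{U}/X}|_{\widetilde{L}}$, but not yet the splitting type.

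To pin down the splitting I would make the evaluation $q^*T_\mathcal{M}\to\mathcal{N}$ completely explicit. Pick a local orthogonal decomposition $V=H_2|_l\oplus H_2'|_l\oplus W|_l$, where $W=(H_2^{\perp}/H_2)|_l$ and $H_2'|_l$ is a complementary isotropic $2$-plane dually paired to $H_2|_l$ under $\mathcal{Q}$. The isotropy condition on first-order deformations of $H_2$ forces the $\mathrm{Hom}(H_2,H_2')$-component of any $\phi\in T_l\mathcal{M}$ to be antisymmetric, giving
\[
T_l\mathcal{M}\cong\Lambda^2(H_2|_l)^{\vee}\oplus\bigl((H_2|_l)^{\vee}\otimes W\bigr).
\]
Using the tautological inclusion $x\colon\mathcal{O}_{\widetilde{L}}(-1)\hookrightarrow H_2|_l\otimes\mathcal{O}$, the evaluation $\phi\mapsto\phi(x)$ is a map $T_l\mathcal{M}\otimes\mathcal{O}_{\widetilde{L}}\to(V/H_2|_l)\otimes\mathcal{O}_{\widetilde{L}}(1)$ which, by $\mathcal{Q}(x,\phi(x))=0$, lands in $\mathcal{N}|_{\widetilde{L}}$.

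The kernel is then read off termwise. On the antisymmetric summand $\Lambda^2(H_2|_l)^{\vee}\otimes\mathcal{O}\cong\mathcal{O}$ the evaluation is an isomorphism onto the $\mathcal{O}$-factor of $N_{L/X}$; on the second summand, after a basis $(h_1,h_2)$ of $H_2|_l$, the map $(H_2|_l)^{\vee}\otimes W\otimes\mathcal{O}\cong W\otimes\mathcal{O}^{\oplus 2}\to W\otimes\mathcal{O}_{\widetilde{L}}(1)$ becomes $(w_1,w_2)\mapsto x_1w_1+x_2w_2$, the standard surjection with kernel $W\otimes\mathcal{O}_{\widetilde{L}}(-1)$. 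Combining, the kernel of the total map, which is $T_{\mathcal{U}/X}|_{\widetilde{L}}$, equals $W\otimes\mathcal{O}_{\widetilde{L}}(-1)\cong\mathcal{O}_{\widetilde{L}}(-1)^{\oplus(2n-3)}$.

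The main technical hurdle will be the orthogonal-Grassmannian description of $T_l\mathcal{M}$ together with the antisymmetry forced by the isotropy constraint; once these are in hand the rest is routine. A Lie-theoretic alternative is to view $T_{\mathcal{U}/X}$ as the homogeneous bundle on $G/P_{1,2}$ induced from $\mathfrak{p}_1/\mathfrak{p}_{1,2}$, whose weights are the positive roots of $B_n$ with coefficient $0$ on $\alpha_1$ and $1$ on $\alpha_2$; each pairs with $\alpha_1^{\vee}$ to $-1$, yielding $\mathcal{O}_{\widetilde{L}}(-1)$ for every summand and the same conclusion with essentially no geometry. I nevertheless prefer the geometric path, since it matches the paper's stated approach of writing $T_{\mathcal{U}/X}$ explicitly in terms of tautological bundles.
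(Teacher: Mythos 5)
Your proof is correct, but it follows a genuinely different route from the paper's. The paper argues along the $p$-fibres rather than the $q$-fibres: it restricts the tautological exact sequence $0\to H_2/H_1\to H_2^{\perp}/H_1\to q^{*}(H_2^{\perp}/H_2)\to 0$ on $\mathcal{U}=B_n/P_{1,2}$ to a fibre $p^{-1}(x)\cong Q^{2n-3}$, recognizes it as the universal sequence of that quadric, and deduces the global identity $T_{\mathcal{U}/X}\cong (H_2/H_1)^{\vee}\otimes q^{*}(H_2^{\perp}/H_2)$; the lemma then follows at once because $H_2|_{\widetilde{L}}$ and $q^{*}(H_2^{\perp}/H_2)|_{\widetilde{L}}$ are trivial while $H_1|_{\widetilde{L}}=\mathcal{O}_{\widetilde{L}}(-1)$. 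You instead place $T_{\mathcal{U}/X}$ in the sequence $0\to T_{\mathcal{U}/X}\to q^{*}T_{\mathcal{M}}\to \mathcal{N}\to 0$ and compute the kernel of the evaluation $T_l\mathcal{M}\otimes\mathcal{O}_{\widetilde{L}}\to N_{L/X}$ term by term, which requires the description $T_l\mathcal{M}\cong\Lambda^2(H_2|_l)^{\vee}\oplus\bigl((H_2|_l)^{\vee}\otimes W\bigr)$ forced by the isotropy condition and the normal bundle $\mathcal{O}(1)^{\oplus(2n-3)}\oplus\mathcal{O}$ of a line in $Q^{2n-1}$; your termwise kernel computation is sound, and the two answers agree on the nose since $W=(H_2^{\perp}/H_2)|_l$ and $(H_2/H_1)^{\vee}|_{\widetilde{L}}=\mathcal{O}_{\widetilde{L}}(-1)$. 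The paper's argument is shorter and produces a formula for $T_{\mathcal{U}/X}$ on all of $\mathcal{U}$ (reused in the subsequent lemmas of the same section), at the cost of quoting the identification of $T_{Q^{2n-3}}$ in terms of the tautological subbundle and its orthogonal complement; yours avoids that input at the price of the antisymmetry bookkeeping. The Lie-theoretic alternative you sketch (weights of $\mathfrak{p}_1/\mathfrak{p}_{1,2}$ paired against $\alpha_1^{\vee}$) is essentially the tag-and-weights method the paper reserves for the exceptional types, so it too would be an acceptable substitute here.
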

 \begin{proof}
 For $x\in X$, the $p$-fiber $p^{-1}(x)=\{(x,l)|x\in L\}$ is isomorphic to $\mathcal{M}_x=Q^{2n-3}$. Over $p^{-1}(x)\cong Q^{2n-3}$, we have the universal bundle sequence
\begin{align}\label{uni}
0\rightarrow \mathcal{O}_{Q^{2n-3}}(-1)\rightarrow \mathcal{O}_{Q^{2n-3}}(-1)^{\bot} \rightarrow \mathcal{O}_{Q^{2n-3}}(-1)^{\bot}/\mathcal{O}_{Q^{2n-3}}(-1) \rightarrow 0,
\end{align}
where $\mathcal{O}_{Q^{2n-3}}(-1)$ is the rank 1 tautological bundle over $Q^{2n-3}$, which is the pull back of $\mathcal{O}_{\mathbb{P}^{2n-2}}(-1)$ under the embedding $Q^{2n-3}\hookrightarrow \mathbb{P}^{2n-2}$ and $\mathcal{O}_{Q^{2n-3}}(-1)^{\bot}$ denotes its $\mathcal{Q}$-orthogonal complement. Notice that the tangent bundle of $Q^{2n-3}$ can be represented as
$$T_{Q^{2n-3}}=\mathcal{O}_{Q^{2n-3}}(1)\otimes\mathcal{O}_{Q^{2n-3}}(-1)^{\bot}.$$
Let's consider the exact sequence
$$0\rightarrow H_2/H_1\rightarrow H_2^{\bot}/H_1\rightarrow q^{*}(H_2^{\bot}/H_2)\rightarrow 0,$$ of vector bundles on $\mathcal{U}=B_n/P_{1,2}$.
By restricting it to the the $p$-fiber $p^{-1}(x)$, we obtain the universal bundle sequence (\ref{uni}) on $p^{-1}(x)$. Therefore,
$T_{\mathcal{U}/X}=(H_2/H_1)^{\vee}\otimes q^{*}(H_2^{\bot}/H_2)$
and
$$T_{\mathcal{U}/X}|_{\widetilde{L}}=\mathcal{O}_{\widetilde{L}}(-1)^{\oplus 2n-3}.$$
 \end{proof}

(2) For $k~(2\le k\le n-1)$, the odd Orthogonal Grassmannian $B_n/P_k:=OG(k,2n+1)$ parametrizes the $k$-dimensional isotropic subspaces in $V$. In this case, $\mathcal{M}=B_n/P_{k-1,k+1}$ and $\mathcal{U}=B_n/P_{k-1,k,k+1}$. We have the natural diagram:
\begin{align}
\xymatrix{
	\mathcal{U}=B_n/P_{k-1,k,k+1}\ar[d]^{p}   \ar[r]^-{q} &\mathcal{M}=B_n/P_{k-1,k+1}\\
	X=B_n/P_k
}
\end{align}	
and $\mathcal{M}_x$ is $\mathbb{P}^{k-1}\times B_{n-k}/P_1=\mathbb{P}^{k-1}\times Q^{2(n-k)-1}$.
\begin{lemma}\label{bk}
	Let $\widetilde{L}=q^{-1}(l)\subset \mathcal{U}$ for $l\in \mathcal{M}$. For the relative tangent bundle $T_{\mathcal{U}/X}$, we have
	$$T_{\mathcal{U}/X}|_{\widetilde{L}}=\mathcal{O}_{\widetilde{L}}(-1)^{\oplus 2n-k-2}.$$
\end{lemma}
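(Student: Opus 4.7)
The plan is to follow the strategy of Lemma \ref{b1}, adapted to the fact that here the $p$-fiber $p^{-1}(V_k)=\mathcal{M}_x$ is a product $\mathbb{P}^{k-1}\times Q^{2n-2k-1}$ rather than a single quadric. First, I would describe the $p$-fiber intrinsically: a point of $\mathcal{U}$ over a fixed $V_k\in X$ is a pair $(V_{k-1},V_{k+1})$ with $V_{k-1}\subset V_k\subset V_{k+1}\subset V_k^{\perp}\subset V$, so $p^{-1}(V_k)\cong\mathbb{P}(V_k)\times Q(V_k^{\perp}/V_k)$, where the first factor parametrizes hyperplanes of $V_k$ and the second parametrizes isotropic lines in $V_k^{\perp}/V_k$.

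Second, each factor carries a standard tautological sequence on the fiber. The $\mathbb{P}^{k-1}$-factor gives $0\to H_{k-1}\to H_k\to H_k/H_{k-1}\to 0$ (with $H_k$ trivial on the fiber), while the quadric factor gives the direct analogue of the sequence used in Lemma \ref{b1},
\[
0\to H_{k+1}/H_k\to H_{k+1}^{\perp}/H_k\to H_{k+1}^{\perp}/H_{k+1}\to 0,
\]
over $Q(V_k^{\perp}/V_k)$. Since the tangent bundle of a product splits as a direct sum and the tangent bundle of a projective space (respectively of a quadric of isotropic lines) is the Hom between its tautological sub and quotient, I would identify
\[
T_{\mathcal{U}/X}\;=\;H_{k-1}^{\vee}\otimes(H_k/H_{k-1})\;\oplus\;(H_{k+1}/H_k)^{\vee}\otimes(H_{k+1}^{\perp}/H_{k+1})
\]
globally on $\mathcal{U}$.

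Third, I would restrict this formula to $\widetilde{L}=q^{-1}(l)$ for $l=(V_{k-1},V_{k+1})\in\mathcal{M}$. The fiber $\widetilde{L}$ is $\mathbb{P}(V_{k+1}/V_{k-1})\cong\mathbb{P}^1$, parameterizing $k$-dimensional subspaces $V_k'$ with $V_{k-1}\subset V_k'\subset V_{k+1}$. Because $H_{k-1}$, $H_{k+1}$ and $H_{k+1}^{\perp}$ are pulled back from $\mathcal{M}$ via $q$, they are trivial on $\widetilde{L}$; in particular $(H_{k+1}^{\perp}/H_{k+1})|_{\widetilde{L}}$ is trivial of rank $2n-2k-1$. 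On $\widetilde{L}=\mathbb{P}(V_{k+1}/V_{k-1})$ the subbundle $H_k/H_{k-1}$ is the tautological line bundle $\mathcal{O}_{\widetilde{L}}(-1)$, and by a determinant comparison the quotient $H_{k+1}/H_k$ is $\mathcal{O}_{\widetilde{L}}(1)$. Substitution then yields
\[
T_{\mathcal{U}/X}|_{\widetilde{L}}\;=\;\mathcal{O}_{\widetilde{L}}(-1)^{\oplus(k-1)}\;\oplus\;\mathcal{O}_{\widetilde{L}}(-1)^{\oplus(2n-2k-1)}\;=\;\mathcal{O}_{\widetilde{L}}(-1)^{\oplus 2n-k-2}.
\]

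The main obstacle is the second step: producing the global formula for $T_{\mathcal{U}/X}$ on $\mathcal{U}$ in terms of the tautological flag, rather than merely on a single $p$-fiber. Once that direct-sum decomposition is established, the restriction to $\widetilde{L}$ reduces to a routine determinantal computation on $\mathbb{P}^1$.
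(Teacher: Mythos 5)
Your proposal is correct and follows essentially the same route as the paper: decompose $T_{\mathcal{U}/X}$ according to the product structure $\mathcal{M}_x\cong\mathbb{P}^{k-1}\times Q^{2(n-k)-1}$ of the $p$-fiber, identify the two summands as $H_{k-1}^{\vee}\otimes(H_k/H_{k-1})$ and $(H_{k+1}/H_k)^{\vee}\otimes(H_{k+1}^{\bot}/H_{k+1})$ via the tautological sequences, and restrict to $\widetilde{L}$ using that $H_{k-1}$, $H_{k+1}$, $H_{k+1}^{\bot}$ are $q$-pullbacks (the paper makes the globalization step precise by factoring through the intermediate families $\mathcal{U}_1=B_n/P_{k-1,k}$ and $\mathcal{U}_2=B_n/P_{k,k+1}$ and invoking the argument of Lemma 5.6 of the cited reference). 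The rank count $(k-1)+(2n-2k-1)=2n-k-2$ and the twist computation agree with the paper.
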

\begin{proof}
It's not hard to check that over $\mathcal{U}$, we have the following two exact sequences of vector bundles:
\begin{align*}
0\rightarrow (H_k/H_{k-1})^{\vee}\rightarrow p^{\ast}H_k^{\vee} \rightarrow q^{\ast}H_{k-1}^{\vee} \rightarrow 0,
\end{align*}
\begin{align*}
0\rightarrow H_{k+1}/H_k\rightarrow H_{k+1}^{\bot}/H_k\rightarrow q^{*}(H_{k+1}^{\bot}/H_{k+1})\rightarrow 0.
\end{align*}
We will consider the following diagram
\begin{align}
\xymatrix{
\mathcal{U}_1=B_n/P_{k-1,k}  \ar[rrdd]_{p_1} &&\mathcal{U}=B_n/P_{k-1,k,k+1}\ar[ll]_{\pi_1}\ar[rr]^{\pi_2}\ar[dd]^{p}  && \mathcal{U}_2=B_n/P_{k,k+1}  \ar[lldd]^{p_2} \\
	\\
	&&X.
}
\end{align}
All the morphisms in the above diagram are projections. For any $x\in X$, the $p_1$-fiber $p_1^{-1}(x)$ is isomorphic to $\mathbb{P}^{k-1}$ and the $p_2$-fiber $p_2^{-1}(x)$ is isomorphic to $Q^{2(n-k)-1}$. Note that for $x\in X$, the projection $q$ induces an isomorphism
$$q|p^{-1}(x):p^{-1}(x)\rightarrow \mathcal{M}_x=\mathbb{P}^{k-1}\times Q^{2(n-k)-1}.$$
Hence $p^{-1}(x)=p_1^{-1}(x)\times p_2^{-1}(x)$. So we get
$$T_{\mathcal{U}/X}=\pi_1^{*}(T_{\mathcal{U}_1/X})\oplus \pi_2^{*}(T_{\mathcal{U}_2/X}).$$
We mimic the proof of Lemma 5.6 in \cite{D-F-G} to conclude that
$$\pi_1^{*}(T_{\mathcal{U}_1/X})\cong H_k/H_{k-1}\otimes q^{\ast}H_{k-1}^{\vee}, \pi_2^{*}(T_{\mathcal{U}_2/X})\cong (H_{k+1}/H_k)^{\vee}\otimes q^{*}(H_{k+1}^{\bot}/H_{k+1}).$$
Therefore, $T_{\mathcal{U}/X}\cong \big(H_k/H_{k-1}\otimes q^{\ast}H_{k-1}^{\vee}\big)\bigoplus \big((H_{k+1}/H_k)^{\vee}\otimes q^{*}(H_{k+1}^{\bot}/H_{k+1})\big)$ and
$$T_{\mathcal{U}/X}|_{\widetilde{L}}=\mathcal{O}_{\widetilde{L}}(-1)^{\oplus 2n-k-2}.$$
\end{proof}

(3) For $k=n$ (corresponding to the short root $\alpha_{n}$), the odd Orthogonal Grassmannian $B_n/P_n:=OG(n,2n+1)$ parametrizes the $n$-dimensional isotropic subspaces in $V$. In this case, $\mathcal{M}=B_n/P_{n-1}$ and $\mathcal{U}=B_n/P_{n-1,n}$. We have the natural diagram
\begin{align}
\xymatrix{
	\mathcal{U}=B_n/P_{n-1,n}\ar[d]^{p}   \ar[r]^-{q} &\mathcal{M}=B_n/P_{n-1}\\
	X=B_n/P_n
}
\end{align}	
and $\mathcal{M}_x$ is $\mathbb{P}^{n-1}$.
\begin{lemma}\label{bn}
	Let $\widetilde{L}=q^{-1}(l)\subset \mathcal{U}$ for $l\in \mathcal{M}$. For the relative tangent bundle $T_{\mathcal{U}/X}$, we have
	$$T_{\mathcal{U}/X}|_{\widetilde{L}}=\mathcal{O}_{\widetilde{L}}(-2)^{\oplus n-1}.$$
\end{lemma}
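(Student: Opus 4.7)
\medskip

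\noindent\textbf{Proof proposal.} The plan is to mimic the strategy of Lemmas \ref{b1} and \ref{bk}, expressing $T_{\mathcal{U}/X}$ in terms of the tautological subbundles $H_{n-1}\subset H_n$ on $\mathcal{U}$, and then to track carefully the twisting that arises because $\alpha_n$ is a \emph{short} root. The geometric source of the $-2$ (as opposed to the $-1$ seen in the long-root cases) is that under the minimal homogeneous embedding the line $L\subset X$ meets the fiber $p^{-1}(V_n)\cong\mathbb{P}(V_n^{\vee})\cong\mathbb{P}^{n-1}$ as a \emph{conic}, not as a projective line.

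\medskip

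First I would spell out the geometry of $\widetilde{L}=q^{-1}(l)$. A point $l\in\mathcal{M}$ is an $(n-1)$-dimensional isotropic subspace $V_{n-1}\subset V$; the fiber $q^{-1}(l)$ parametrizes the $n$-dimensional isotropic extensions $V_{n-1}\subset V_n$. Since $\dim V_{n-1}^{\bot}/V_{n-1}=3$ and the form $\mathcal{Q}$ restricts to a nondegenerate quadratic form on this quotient, such $V_n/V_{n-1}$ correspond bijectively to isotropic lines in the three-dimensional quadratic space $W:=V_{n-1}^{\bot}/V_{n-1}$, i.e.\ to the smooth quadric $Q^1\subset\mathbb{P}(W)=\mathbb{P}^2$. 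Thus $\widetilde{L}\cong\mathbb{P}^1$, but embedded via the $2$-uple Veronese, so that
\[
\mathcal{O}_{\mathbb{P}(W)}(-1)\big|_{\widetilde{L}}\;\cong\;\mathcal{O}_{\widetilde{L}}(-2).
\]

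\medskip

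Next I would identify the relative tangent bundle. The morphism $p:\mathcal{U}\to X$ is a Grassmann bundle (in fact a $\mathbb{P}^{n-1}$-bundle) whose fiber $p^{-1}(V_n)$ parametrizes hyperplanes in the isotropic subspace $V_n$; hence by the standard formula for the tangent bundle of a Grassmannian bundle,
\[
T_{\mathcal{U}/X}\;\cong\;\mathcal{H}om(H_{n-1},\,H_n/H_{n-1})\;=\;H_{n-1}^{\vee}\otimes(H_n/H_{n-1}).
\]
(I would write the relative Euler/tangent sequence for the Grassmann bundle to justify this, exactly as in the proof of Lemma 5.6 of \cite{D-F-G} cited earlier.) Restricted to $\widetilde{L}$: the bundle $H_{n-1}$ is tautologically trivial on $\widetilde{L}$, namely $H_{n-1}|_{\widetilde{L}}\cong V_{n-1}\otimes\mathcal{O}_{\widetilde{L}}$, so $H_{n-1}^{\vee}|_{\widetilde{L}}\cong\mathcal{O}_{\widetilde{L}}^{\oplus n-1}$; while $H_n/H_{n-1}$ sits inside the trivial bundle $W\otimes\mathcal{O}_{\widetilde{L}}$ as the tautological line subbundle of $\mathbb{P}(W)$ pulled back along $\widetilde{L}\hookrightarrow\mathbb{P}(W)$, which by the previous paragraph equals $\mathcal{O}_{\widetilde{L}}(-2)$.

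\medskip

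Combining these gives
\[
T_{\mathcal{U}/X}\big|_{\widetilde{L}}\;\cong\;\mathcal{O}_{\widetilde{L}}^{\oplus n-1}\otimes\mathcal{O}_{\widetilde{L}}(-2)\;=\;\mathcal{O}_{\widetilde{L}}(-2)^{\oplus n-1},
\]
as desired. The only subtle point in the argument—and the thing that distinguishes this short-root case from Lemmas \ref{b1} and \ref{bk}—is the step where one recognizes $\widetilde{L}$ as a conic inside a projective plane cut out by the residual quadratic form on $V_{n-1}^{\bot}/V_{n-1}$; this is where the degree-$2$ twist enters and produces the $-2$ rather than a $-1$. Once that geometric identification is in hand, the rest is routine bookkeeping with the universal bundles.
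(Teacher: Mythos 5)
Your proof is correct and follows essentially the same route as the paper: both identify $T_{\mathcal{U}/X}\cong \mathcal{H}om(H_{n-1},H_n/H_{n-1})=H_n/H_{n-1}\otimes q^{\ast}H_{n-1}^{\vee}$ via the tautological/Euler sequence of $\mathbb{P}(H_n)\to X$ and then restrict to the $q$-fibre, where $H_{n-1}$ is trivial. Your explicit identification of $\widetilde{L}$ as the conic of isotropic lines in $\mathbb{P}(V_{n-1}^{\bot}/V_{n-1})$ supplies the degree computation $(H_n/H_{n-1})|_{\widetilde{L}}\cong\mathcal{O}_{\widetilde{L}}(-2)$ that the paper states without further justification.
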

\begin{proof}
	It's not hard to check that over $\mathcal{U}$, we have the following exact sequence of vector bundles:
	$$0\rightarrow (H_n/H_{n-1})^{\vee}\rightarrow p^{\ast}H_n^{\vee} \rightarrow q^{\ast}H_{n-1}^{\vee} \rightarrow 0.$$
Restricting it to the $p$-fiber $p^{-1}(x)\cong \mathbb{P}^{n-1}$ is just the Euler sequence on $\mathbb{P}^{n-1}$. In fact, the projection $p$ identifies $\mathcal{U}$ in a canonical fashion with the projective bundle $\mathbb{P}(H_n)$ of $X$. Hence
$T_{\mathcal{U}/X}\cong H_n/H_{n-1}\otimes q^{\ast}H_{n-1}^{\vee}$ and
$$T_{\mathcal{U}/X}|_{\widetilde{L}}=\mathcal{O}_{\widetilde{L}}(-2)^{\oplus n-1}.$$
\end{proof}
\subsubsection{The marked Dynkin diagram $(C_n,k)$}
In this section, we consider the Dynkin diagram $C_n$, which corresponds to the classical Lie group $Sp_{2n}$. Denote by $C_n/P_I:=Sp_{2n}/P_I$ the generalized flag manifold with $I=\{i_1,\ldots,i_s\}\subseteq D$.  Let $V=\mathbb{C}^{2n}$ be a vector space equipped with a nondegenerate skew-symmetric bilinear form $\Omega$. Then $C_n/P_I$ is actually the \emph{Lagrangian flag manifold} $LG(i_1,\ldots,i_s;2n)$, which parametrizing flags
$$V_{i_1}\subset V_{i_2}\subset\cdots\subset V_{i_s}\subset V,$$
where each $V_{i_t}(1\le t\le s)$ is an $i_t$-dimensional isotropic subspace in $V$.

There is universal flag of subbundles
$$0=H_0\subset H_{i_1}\subset \cdots\subset H_{i_s}\subset H_{i_s}^{\bot}\subset\cdots\subset H_{i_1}^{\bot}\subset \mathcal{O}_{C_n/P_I}\times V$$
on $C_n/P_I$, where $H_{i_t}^{\bot}$ is the \emph{$\Omega$-orthogonal complement} of $H_{i_t}$, rank $H_{i_t}=i_t$ and  rank $H_{i_t}^{\bot}=2n-i_t~(1\le t\le s)$.

(1) For $k=1$ (corresponding to the short root $\alpha_{1}$), the Lagrangian Grassmannian $C_n/P_1:=LG(1,2n)$ is just the projective space $\mathbb{P}^{2n-1}$. In this case, $\mathcal{M}=C_n/P_2$ and $\mathcal{U}=C_n/P_{1,2}$. We have the natural diagram
\begin{align}
\xymatrix{
	\mathcal{U}=C_n/P_{1,2}\ar[d]^{p}   \ar[r]^-{q} &\mathcal{M}=C_n/P_2\\
	X=C_n/P_1
}
\end{align}	
and $\mathcal{M}_x$ is $C_{n-1}/P_1$, i.e. the the projective space $\mathbb{P}^{2n-3}$.
\begin{lemma}\label{c1}
	Let $\widetilde{L}=q^{-1}(l)\subset \mathcal{U}$ for $l\in \mathcal{M}$. For the relative tangent bundle $T_{\mathcal{U}/X}$, we have
	$$T_{\mathcal{U}/X}|_{\widetilde{L}}=\mathcal{O}_{\widetilde{L}}(-1)^{\oplus 2n-4}\oplus\mathcal{O}_{\widetilde{L}}(-2).$$
\end{lemma}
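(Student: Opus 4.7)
The plan is to realize $p\colon \mathcal{U}\to X$ as a projective bundle, apply the relative tangent formula, and then exploit the symplectic flag on $\mathcal{U}$ to read off the splitting on a $q$-fiber. The whole argument parallels Lemmas \ref{b1}--\ref{bn}, but the symplectic duality introduced by the short root $\alpha_{1}$ forces the extra $\mathcal{O}(-2)$ summand.

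Since every one-dimensional subspace of $V$ is automatically isotropic for $\Omega$, the $p$-fiber over $x=[V_{1}]$ parametrizes $2$-dimensional isotropic $V_{2}\supset V_{1}$, equivalently lines in $V_{1}^{\bot}/V_{1}$. Hence $\mathcal{U}\cong \mathbb{P}_{\mathrm{sub}}(H_{1}^{\bot}/H_{1})$ over $X=\mathbb{P}(V)$ with tautological subline $H_{2}/H_{1}\subset H_{1}^{\bot}/H_{1}$, and the relative Euler sequence gives
\[
T_{\mathcal{U}/X}\cong (H_{2}/H_{1})^{\vee}\otimes (H_{1}^{\bot}/H_{2}).
\]
The symplectic flag $H_{2}\subset H_{2}^{\bot}\subset H_{1}^{\bot}$ on $\mathcal{U}$ yields
\[
0\to H_{2}^{\bot}/H_{2}\to H_{1}^{\bot}/H_{2}\to H_{1}^{\bot}/H_{2}^{\bot}\to 0,
\]
where the subsheaf depends only on $[V_{2}]$, so $H_{2}^{\bot}/H_{2}=q^{*}(H_{2}^{\bot}/H_{2})_{\mathcal{M}}$ is pulled back from $\mathcal{M}$; dualizing the inclusion $V_{1}\hookrightarrow V_{2}$ and using the identifications $V/V_{i}^{\bot}\cong V_{i}^{\vee}$ induced by $\Omega$ provides the crucial isomorphism $H_{1}^{\bot}/H_{2}^{\bot}\cong (H_{2}/H_{1})^{\vee}$.

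To restrict to $\widetilde{L}=q^{-1}(l)$, note that $p|_{\widetilde{L}}\colon \widetilde{L}\xrightarrow{\sim} L=\mathbb{P}(V_{2})$, hence $H_{1}|_{\widetilde{L}}=\mathcal{O}_{\widetilde{L}}(-1)$, while $H_{2}|_{\widetilde{L}}=V_{2}\otimes \mathcal{O}_{\widetilde{L}}$ is trivial of rank $2$ because $V_{2}$ is fixed along the $q$-fiber; the Euler sequence on $\mathbb{P}(V_{2})$ then gives $(H_{2}/H_{1})|_{\widetilde{L}}=\mathcal{O}_{\widetilde{L}}(1)$. Restricting the filtration above to $\widetilde{L}$, tensoring by $(H_{2}/H_{1})^{\vee}|_{\widetilde{L}}=\mathcal{O}_{\widetilde{L}}(-1)$, and using the triviality of $H_{2}^{\bot}/H_{2}$ on $\widetilde{L}$, I obtain
\[
0\to \mathcal{O}_{\widetilde{L}}(-1)^{\oplus 2n-4}\to T_{\mathcal{U}/X}|_{\widetilde{L}}\to \mathcal{O}_{\widetilde{L}}(-2)\to 0,
\]
which splits because $\mathrm{Ext}^{1}_{\mathbb{P}^{1}}(\mathcal{O}(-2),\mathcal{O}(-1))=H^{1}(\mathbb{P}^{1},\mathcal{O}(1))=0$, producing the claimed decomposition.

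The only subtle step is the symplectic identification $H_{1}^{\bot}/H_{2}^{\bot}\cong (H_{2}/H_{1})^{\vee}$: after tensoring by $(H_{2}/H_{1})^{\vee}$ it is precisely this that produces the $\mathcal{O}(-2)$ quotient on $\widetilde{L}$ rather than an $\mathcal{O}(-1)$ one, so this is the step responsible for the exceptional summand distinguishing the present short-root case from Lemmas \ref{b1}--\ref{bn}.
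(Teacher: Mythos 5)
Your proposal is correct and follows essentially the same route as the paper: identify $\mathcal{U}$ with $\mathbb{P}(H_1^{\bot}/H_1)$ over $X$ and use the relative Euler sequence to get $T_{\mathcal{U}/X}\cong (H_2/H_1)^{\vee}\otimes (H_1^{\bot}/H_2)$. The paper simply asserts the resulting splitting type on $\widetilde{L}$, whereas you supply the omitted justification via the filtration $H_2\subset H_2^{\bot}\subset H_1^{\bot}$ and the symplectic identification $H_1^{\bot}/H_2^{\bot}\cong (H_2/H_1)^{\vee}$; this is a welcome elaboration, not a different argument.
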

\begin{proof}
	It's not hard to check that over $\mathcal{U}$, we have the following exact sequence of vector bundles:
	$$0\rightarrow H_2/H_1\rightarrow p^{*}(H_1^{\bot}/H_1)\rightarrow H_1^{\bot}/H_2\rightarrow 0.$$
	Restricting it to the $p$-fiber $p^{-1}(x)\cong \mathbb{P}^{2n-3}$ is just the Euler sequence on $\mathbb{P}^{2n-3}$. In fact, the projection $p$ identifies $\mathcal{U}$ in a canonical fashion with the projective bundle $\mathbb{P}(H_1^{\bot}/H_1)$ of $X$. Hence
	$T_{\mathcal{U}/X}\cong (H_2/H_1)^{\vee}\otimes H_1^{\bot}/H_2$ and
	$$T_{\mathcal{U}/X}|_{\widetilde{L}}=\mathcal{O}_{\widetilde{L}}(-1)^{\oplus 2n-4}\oplus\mathcal{O}_{\widetilde{L}}(-2) .$$
\end{proof}

(2) For $k~(2\le k\le n-1)$ (corresponding to the short root $\alpha_{k}$), the Lagrangian Grassmannian $C_n/P_k:=LG(k,2n+1)$ parametrizes the $k$-dimensional isotropic subspaces in $V$. In this case $\mathcal{M}=C_n/P_{k-1,k+1}$ and $\mathcal{U}=C_n/P_{k-1,k,k+1}$. We have the natural diagram
\begin{align}
\xymatrix{
	\mathcal{U}=C_n/P_{k-1,k,k+1}\ar[d]^{p}   \ar[r]^-{q} &\mathcal{M}=C_n/P_{k-1,k+1}\\
	X=C_n/P_k
}
\end{align}	
and $\mathcal{M}_x$ is $\mathbb{P}^{k-1}\times C_{n-k}/P_1=\mathbb{P}^{k-1}\times \mathbb{P}^{2(n-k)-1}$.
\begin{lemma}\label{ck}
	Let $\widetilde{L}=q^{-1}(l)\subset \mathcal{U}^k$ for $l\in \mathcal{M}$. For the relative tangent bundle $T_{\mathcal{U}/X}$, we have
	$$T_{\mathcal{U}/X}|_{\widetilde{L}}=\mathcal{O}_{\widetilde{L}}(-1)^{\oplus 2n-k-3}\oplus\mathcal{O}_{\widetilde{L}}(-2).$$
\end{lemma}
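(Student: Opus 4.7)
The plan is to mimic the proof of Lemma \ref{bk} from the $B_n$ case, combining it with the short-root analysis of Lemma \ref{c1}. Since $\mathcal{M}_x\cong\mathbb{P}^{k-1}\times \mathbb{P}^{2(n-k)-1}$ is a product, the relative tangent bundle will split into two pieces coming from the two factors, and I would treat each piece separately using an intermediate projection.

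Concretely, first I would introduce the diagram
\begin{align*}
\xymatrix{
\mathcal{U}_1=C_n/P_{k-1,k}  \ar[rrdd]_{p_1} &&\mathcal{U}=C_n/P_{k-1,k,k+1}\ar[ll]_{\pi_1}\ar[rr]^{\pi_2}\ar[dd]^{p}  && \mathcal{U}_2=C_n/P_{k,k+1}  \ar[lldd]^{p_2} \\
	\\
	&&X,
}
\end{align*}
where $p_1,p_2,p$ are the natural projections, and check that the $p$-fiber over $x\in X$ is identified via $q$ with $\mathcal{M}_x = p_1^{-1}(x)\times p_2^{-1}(x)\cong \mathbb{P}^{k-1}\times \mathbb{P}^{2(n-k)-1}$. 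This product structure forces the clean decomposition
\[T_{\mathcal{U}/X}\;\cong\;\pi_1^{\ast}T_{\mathcal{U}_1/X}\;\oplus\;\pi_2^{\ast}T_{\mathcal{U}_2/X}.\]

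Next I would identify each summand using universal sequences of the Lagrangian flag varieties. For $\pi_1^{\ast}T_{\mathcal{U}_1/X}$, the argument is literally the same as in Lemma \ref{bk} (which in turn follows the $A_n$ case): the projection $\mathcal{U}_1\to X$ realises $\mathcal{U}_1$ as $\mathbb{P}(H_k)$, so from the tautological sequence one obtains
\[\pi_1^{\ast}T_{\mathcal{U}_1/X}\;\cong\;(H_k/H_{k-1})\otimes q^{\ast}H_{k-1}^{\vee},\]
which restricts on $\widetilde{L}$ to $\mathcal{O}_{\widetilde{L}}(-1)^{\oplus k-1}$. For $\pi_2^{\ast}T_{\mathcal{U}_2/X}$ I would imitate the proof of Lemma \ref{c1}: the projection $\mathcal{U}_2\to X$ identifies $\mathcal{U}_2$ with the projective bundle $\mathbb{P}(H_k^{\bot}/H_k)$ via the symplectic form, and the corresponding relative Euler sequence
\[0\to H_{k+1}/H_k\to p_2^{\ast}(H_k^{\bot}/H_k)\to H_k^{\bot}/H_{k+1}\to 0\]
gives $\pi_2^{\ast}T_{\mathcal{U}_2/X}\cong (H_{k+1}/H_k)^{\vee}\otimes (H_k^{\bot}/H_{k+1})$.

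Finally I would compute the splitting type on $\widetilde{L}$. The factor $(H_{k+1}/H_k)^{\vee}|_{\widetilde{L}}\cong\mathcal{O}_{\widetilde{L}}(-1)$, while $H_k^{\bot}/H_{k+1}|_{\widetilde{L}}$ is a rank $2(n-k)-1$ bundle that decomposes, just as in the $k=1$ case, as $\mathcal{O}_{\widetilde{L}}^{\oplus 2(n-k)-2}\oplus \mathcal{O}_{\widetilde{L}}(-1)$; here the distinguished $\mathcal{O}(-1)$ summand is exactly the symplectic-dual line to $H_{k+1}/H_k$, and this is the step I expect to be the only real obstacle, since it is where the short-root phenomenon produces the $\mathcal{O}(-2)$ summand. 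Tensoring yields $\mathcal{O}_{\widetilde{L}}(-1)^{\oplus 2(n-k)-2}\oplus \mathcal{O}_{\widetilde{L}}(-2)$ for $\pi_2^{\ast}T_{\mathcal{U}_2/X}|_{\widetilde{L}}$, and adding the $\mathcal{O}(-1)^{\oplus k-1}$ from $\pi_1^{\ast}T_{\mathcal{U}_1/X}|_{\widetilde{L}}$ gives the desired
\[T_{\mathcal{U}/X}|_{\widetilde{L}}\;=\;\mathcal{O}_{\widetilde{L}}(-1)^{\oplus 2n-k-3}\oplus\mathcal{O}_{\widetilde{L}}(-2).\]
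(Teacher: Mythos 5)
Your proposal is correct and follows essentially the same route as the paper: the same diagram through $\mathcal{U}_1=C_n/P_{k-1,k}$ and $\mathcal{U}_2=C_n/P_{k,k+1}$, the same product decomposition $T_{\mathcal{U}/X}\cong\pi_1^{\ast}T_{\mathcal{U}_1/X}\oplus\pi_2^{\ast}T_{\mathcal{U}_2/X}$, and the same identifications $\pi_1^{\ast}T_{\mathcal{U}_1/X}\cong H_k/H_{k-1}\otimes q^{\ast}H_{k-1}^{\vee}$ and $\pi_2^{\ast}T_{\mathcal{U}_2/X}\cong (H_{k+1}/H_k)^{\vee}\otimes H_k^{\bot}/H_{k+1}$. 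Your final step, locating the $\mathcal{O}_{\widetilde{L}}(-2)$ summand in the symplectic-dual line $H_k^{\bot}/H_{k+1}^{\bot}\cong(H_{k+1}/H_k)^{\vee}$ inside $H_k^{\bot}/H_{k+1}$, is exactly the computation the paper leaves implicit, and it is carried out correctly.
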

\begin{proof}
	It's not hard to check that over $\mathcal{U}$, we have the following two exact sequences of vector bundles:
	\begin{align*}
	0\rightarrow (H_k/H_{k-1})^{\vee}\rightarrow p^{\ast}H_k^{\vee} \rightarrow q^{\ast}H_{k-1}^{\vee} \rightarrow 0,
	\end{align*}
	\begin{align*}
	0\rightarrow H_{k+1}/H_k\rightarrow p^{*}(H_k^{\bot}/H_k)\rightarrow H_k^{\bot}/H_{k+1}\rightarrow 0.
	\end{align*}
	We will consider the following diagram
	\begin{align}
	\xymatrix{
		\mathcal{U}_1=C_n/P_{k-1,k}  \ar[rrdd]_{p_1} &&\mathcal{U}=C_n/P_{k-1,k,k+1}\ar[ll]_{\pi_1}\ar[rr]^{\pi_2}\ar[dd]^{p}  && \mathcal{U}_2=C_n/P_{k,k+1}  \ar[lldd]^{p_2} \\
		\\
		&&X.
	}
	\end{align}
	All the morphisms in the above diagram are projections. For any $x\in X$, the $p_1$-fiber $p_1^{-1}(x)$ is isomorphic to $\mathbb{P}^{k-1}$ and the $p_2$-fiber $p_2^{-1}(x)$ is isomorphic to $\mathbb{P}^{2(n-k)-1}$. Note that for $x\in X$, the projection $q$ induces an isomorphism
	$$q|_{p^{-1}(x)}:p^{-1}(x)\rightarrow \mathcal{M}_x=\mathbb{P}^{k-1}\times \mathbb{P}^{2(n-k)-1}.$$
	Hence $p^{-1}(x)=p_1^{-1}(x)\times p_2^{-1}(x)$. So we get
	$$T_{\mathcal{U}/X}=\pi_1^{*}(T_{\mathcal{U}_1/X})\oplus \pi_2^{*}(T_{\mathcal{U}_2/X}).$$
	We mimic the proof of Lemma 5.6 in \cite{D-F-G} to conclude that
	$$\pi_1^{*}(T_{\mathcal{U}_1/X})\cong H_k/H_{k-1}\otimes q^{\ast}H_{k-1}^{\vee}, \pi_2^{*}(T_{\mathcal{U}_2/X})\cong (H_{k+1}/H_k)^{\vee}\otimes H_k^{\bot}/H_{k+1}.$$
	Therefore, $T_{\mathcal{U}/X}\cong \big(H_k/H_{k-1}\otimes q^{\ast}H_{k-1}^{\vee}\big)\bigoplus \big(H_{k+1}/H_k)^{\vee}\otimes H_k^{\bot}/H_{k+1}\big)$ and
	$$T_{\mathcal{U}/X}|_{\widetilde{L}}=\mathcal{O}_{\widetilde{L}}(-1)^{\oplus 2n-k-3}\oplus\mathcal{O}_{\widetilde{L}}(-2).$$
\end{proof}

(3) For $k=n$ (corresponding to the long root $\alpha_{n}$), the Lagrangian Grassmannian $C_n/P_n:=LG(n,2n)$ parametrizes the $n$-dimensional isotropic subspaces in $V$. In this case $\mathcal{M}=C_n/P_{n-1}$ and $\mathcal{U}=C_n/P_{n-1,n}$. We have the natural diagram
\begin{align}
\xymatrix{
	\mathcal{U}=C_n/P_{n-1,n}\ar[d]^{p}   \ar[r]^-{q} &\mathcal{M}=C_n/P_{n-1}\\
	X=C_n/P_n
}
\end{align}	
and $\mathcal{M}_x$ is $\mathbb{P}^{n-1}$.
\begin{lemma}\label{cn}
	Let $\widetilde{L}=q^{-1}(l)\subset \mathcal{U}$ for $l\in \mathcal{M}$. For the relative tangent bundle $T_{\mathcal{U}/X}$, we have
	$$T_{\mathcal{U}/X}|_{\widetilde{L}}=\mathcal{O}_{\widetilde{L}}(-1)^{\oplus n-1}.$$
\end{lemma}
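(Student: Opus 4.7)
The plan is to mirror the proof of Lemma \ref{bn}, since both $C_n/P_n$ and $B_n/P_n$ share the setup where $\mathcal{U} = G/P_{n-1,n}$ is a $\mathbb{P}^{n-1}$-bundle over $X$ via $p$, while the final splitting type will differ only in the amount of twist. The difference reflects the fact that $\alpha_n$ is a long root of $C_n$, whereas it is a short root in $B_n$.

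First, over $\mathcal{U}$ one dualizes the tautological inclusion $H_{n-1}\subset H_n$ to obtain the short exact sequence
\[
0\rightarrow (H_n/H_{n-1})^\vee\rightarrow p^{\ast}H_n^\vee\rightarrow q^{\ast}H_{n-1}^\vee\rightarrow 0,
\]
whose restriction to a $p$-fiber $p^{-1}(x)\cong\mathbb{P}^{n-1}$ is the Euler sequence. This identifies $p:\mathcal{U}\to X$ with the projective bundle $\mathbb{P}(H_n)$ over $X$, and the standard formula for the relative tangent of a projective bundle then yields
\[
T_{\mathcal{U}/X}\cong (H_n/H_{n-1})\otimes q^{\ast}H_{n-1}^\vee.
\]
Restricting to $\widetilde{L}=q^{-1}(l)$, on which $q$ is constant, the second factor becomes the trivial bundle $\mathcal{O}_{\widetilde{L}}^{\oplus n-1}$, so $T_{\mathcal{U}/X}|_{\widetilde{L}}\cong \bigl((H_n/H_{n-1})|_{\widetilde{L}}\bigr)^{\oplus n-1}$.

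The main step, and the only place where the $C_n$ computation genuinely departs from the $B_n$ case, is pinning down the degree of the line bundle $(H_n/H_{n-1})|_{\widetilde{L}}$ on $\widetilde{L}\cong\mathbb{P}^1$. Via the isomorphism $p:\widetilde{L}\to L$, and noting that the fixed $(n-1)$-plane $V_{n-1}$ corresponding to $l$ embeds as a trivial subbundle of $H_n|_L$, this degree equals $c_1(H_n|_L)=-\deg(\det H_n^\vee)|_L$. Since $\alpha_n$ is a long root, the Pl\"ucker-type ample generator of $\mathrm{Pic}(C_n/P_n)\cong\mathbb{Z}$ is $\det H_n^\vee$ itself, in contrast with the $B_n/P_n$ case where the ample generator is the spinor bundle, only a square root of $\det H_n^\vee$. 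Concretely, on a pencil $\{V_n(t)\}$ of Lagrangians containing $V_{n-1}$, the quotient $H_n/V_{n-1}$ is the tautological line bundle on $\mathbb{P}(V_{n-1}^{\perp}/V_{n-1})\cong\mathbb{P}^1$, hence of degree $-1$. Therefore $(H_n/H_{n-1})|_{\widetilde{L}}\cong\mathcal{O}_{\widetilde{L}}(-1)$ and $T_{\mathcal{U}/X}|_{\widetilde{L}}\cong\mathcal{O}_{\widetilde{L}}(-1)^{\oplus n-1}$, as claimed.
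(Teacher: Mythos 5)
Your proof is correct and follows essentially the same route as the paper: the same exact sequence $0\to (H_n/H_{n-1})^\vee\to p^{\ast}H_n^\vee\to q^{\ast}H_{n-1}^\vee\to 0$, the identification $\mathcal{U}\cong\mathbb{P}(H_n)$, and the resulting formula $T_{\mathcal{U}/X}\cong (H_n/H_{n-1})\otimes q^{\ast}H_{n-1}^\vee$. The only difference is that you make explicit the final degree computation $\deg\bigl((H_n/H_{n-1})|_{\widetilde{L}}\bigr)=-1$ via the pencil of Lagrangians in $\mathbb{P}(V_{n-1}^{\perp}/V_{n-1})\cong\mathbb{P}^1$ (versus the degree-$2$ conic in the $B_n$ case), a step the paper leaves implicit; your argument for it is correct.
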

\begin{proof}
	It's not hard to check that over $\mathcal{U}$, we have the following exact sequence of vector bundles:
	$$0\rightarrow (H_n/H_{n-1})^{\vee}\rightarrow p^{\ast}H_n^{\vee} \rightarrow q^{\ast}H_{n-1}^{\vee} \rightarrow 0.$$
	Restricting it to the $p$-fiber $p^{-1}(x)\cong \mathbb{P}^{n-1}$ is just the Euler sequence on $\mathbb{P}^{n-1}$. In fact, the projection $p$ identifies $\mathcal{U}$ in a canonical fashion with the projective bundle $\mathbb{P}(H_n)$ of $X$. Hence
	$T_{\mathcal{U}/X}\cong H_n/H_{n-1}\otimes q^{\ast}H_{n-1}^{\vee}$ and
	$$T_{\mathcal{U}/X}|_{\widetilde{L}}=\mathcal{O}_{\widetilde{L}}(-1)^{\oplus n-1}.$$
\end{proof}
\subsubsection{The marked Dynkin diagram $(D_n,k)$}
In this section, we consider the Dynkin diagram $D_n$, it corresponds to the classical Lie group $SO_{2n}$. Denote by $D_n/P_I:=SO_{2n}/P_I$ the generalized flag manifold with $I=\{i_1,\ldots,i_s\}\subseteq D$.  Let $V=\mathbb{C}^{2n}$ be a vector space equipped with a nondegenerate symmetric bilinear form $\mathcal{Q}$. Then

Case 1. $n-1,n\in I$. $D_n/P_I$ is the \emph{even Orthogonal flag manifold} $OG(i_1,\ldots,i_{s-2},n-1,n;2n)$ which parametrizes two families of flags
$$V_{i_1}\subset\cdots\subset V_{i_{s-2}}\subset V_{n-1}\subset V~ \text{and}~ V_{i_1}\subset\cdots\subset V_{i_{s-2}}\subset V_{n-1}'\subset V$$
where each $V_{i_t}(1\le t\le s-2)$ is an $i_t$-dimensional isotropic subspace in $V$, $V_{n-1}$ and $V_{n-1}'$ are the $(n-1)$-dimensional isotropic subspaces in $V$.

There are universal flags of subbundles
\begin{align*}
0=H_0\subset H_{i_1}\subset \cdots\subset H_{n-1}\subset \mathcal{O}_{D_n/P_I}\times V,\\
0=H_0\subset H_{i_1}\subset \cdots\subset H_{n-1}'\subset \mathcal{O}_{D_n/P_I}\times V
\end{align*}
on $D_n/P_I$, where rank $H_{i_t}=i_t~(1\le t\le s-2)$ and $\text{rank} ~H_{n-1}=\text{rank} ~H_{n-1}'=n-1$.

Case 2. $n-1\in I,n\notin I$ or $n-1\notin I, n\in I$. $D_n/P_I$ is one of the two irreducible components of the \emph{even Orthogonal flag manifold} $OG(i_1,\ldots,i_{s-1},n;2n)~(i_s=n)$, which parametrizes flag
$$V_{i_1}\subset V_{i_2}\subset\cdots\subset V_n\subset V,$$
where each $V_{i_t}(1\le t\le s)$ is an $i_t$-dimensional isotropic subspace in $V$.

There is a universal flag of subbundles
$$0=H_0\subset H_{i_1}\subset \cdots\subset H_n\subset \mathcal{O}_{D_n/P_I}\times V$$
on $D_n/P_I$, where rank $H_{i_t}=i_t~(1\le t\le s)$.

Case 3. $n-1,n\notin I$. $D_n/P_I$ is actually the \emph{even Orthogonal flag manifold} $OG(i_1,\ldots,i_{s-1},i_s;2n)$, which parametrizes flag
$$V_{i_1}\subset V_{i_2}\subset\cdots\subset V_{i_s}\subset V,$$
where each $V_{i_t}(1\le t\le s)$ is an $i_t$-dimensional isotropic subspace in $V$.

There is a universal flag of subbundles
$$0=H_0\subset H_{i_1}\subset \cdots\subset H_{i_s}\subset H_{i_s}^{\bot}\subset\cdots\subset H_{i_1}^{\bot}\subset \mathcal{O}_{D_n/P_I}\times V$$
on $D_n/P_I$, where $H_{i_t}^{\bot}$ is the \emph{$\mathcal{Q}$-orthogonal complement} of $H_{i_t}$, rank $H_{i_t}=i_t$ and rank $H_{i_t}^{\bot}=2n-i_t~(1\le t\le s)$.

(1) For $k~(1\le k\le n-3)$, the even Orthogonal Grassmannian $D_n/P_k:=OG(k,2n)$ parametrizes the $k$-dimensional isotropic subspaces in $V$. In this case, $\mathcal{M}=D_n/P_{k-1,k+1}$ and $\mathcal{U}=D_n/P_{k-1,k,k+1}$, i.e. we have the natural diagram:
\begin{align}
\xymatrix{
	\mathcal{U}=D_n/P_{k-1,k,k+1}\ar[d]^{p}   \ar[r]^-{q} &\mathcal{M}=D_n/P_{k-1,k+1}\\
	X=D_n/P_k
}
\end{align}	
and $\mathcal{M}_x$ is $\mathbb{P}^{k-1}\times D_{n-k}/P_1=\mathbb{P}^{k-1}\times Q^{2(n-k-1)}$.
\begin{lemma}\label{dk}
	Let $\widetilde{L}=q^{-1}(l)\subset \mathcal{U}$ for $l\in \mathcal{M}$. For the relative tangent bundle $T_{\mathcal{U}/X}$, we have
	$$T_{\mathcal{U}/X}|_{\widetilde{L}}=\mathcal{O}_{\widetilde{L}}(-1)^{\oplus 2n-k-3}.$$
\end{lemma}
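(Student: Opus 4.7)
The plan is to copy the structure of the proof of Lemma \ref{bk} almost verbatim, replacing the odd quadric $Q^{2(n-k)-1}$ by the even quadric $Q^{2(n-k-1)}$ wherever it appears. The key ingredients are the two tautological short exact sequences on $\mathcal{U}=D_n/P_{k-1,k,k+1}$:
\begin{align*}
0\to (H_k/H_{k-1})^\vee\to p^*H_k^\vee\to q^*H_{k-1}^\vee\to 0,
\end{align*}
\begin{align*}
0\to H_{k+1}/H_k\to H_{k+1}^\bot/H_k\to q^*(H_{k+1}^\bot/H_{k+1})\to 0.
\end{align*}
The first is the dual of $0\to q^*H_{k-1}\to p^*H_k\to H_k/H_{k-1}\to 0$. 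The second is available because $H_{k+1}$ is isotropic, so $H_k\subset H_{k+1}\subset H_{k+1}^\bot$, and because $H_{k+1}^\bot/H_{k+1}$ depends only on the data on $\mathcal{M}$, so it genuinely appears as a $q$-pullback on the right.

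Next I insert $\mathcal{U}$ into the diagram with $\mathcal{U}_1=D_n/P_{k-1,k}$ and $\mathcal{U}_2=D_n/P_{k,k+1}$ and the obvious projections $\pi_1\colon\mathcal{U}\to\mathcal{U}_1$, $\pi_2\colon\mathcal{U}\to\mathcal{U}_2$, $p_i\colon\mathcal{U}_i\to X$. For $x\in X$ the $p_1$-fiber is $\mathbb{P}^{k-1}=\mathbb{P}(H_k^\vee|_x)$ and the $p_2$-fiber is the even quadric $Q^{2(n-k-1)}$ of isotropic lines in $V_k^\bot/V_k$; these combine to $\mathcal{M}_x=\mathbb{P}^{k-1}\times Q^{2(n-k-1)}$. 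This product structure yields
\begin{align*}
T_{\mathcal{U}/X}=\pi_1^*T_{\mathcal{U}_1/X}\oplus \pi_2^*T_{\mathcal{U}_2/X},
\end{align*}
and the same relative Euler-type argument used in the $B_n$ proof identifies these summands from the two displayed sequences as
\begin{align*}
\pi_1^*T_{\mathcal{U}_1/X}\cong H_k/H_{k-1}\otimes q^*H_{k-1}^\vee,\qquad \pi_2^*T_{\mathcal{U}_2/X}\cong (H_{k+1}/H_k)^\vee\otimes q^*(H_{k+1}^\bot/H_{k+1}).
\end{align*}

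Finally, restricting to $\widetilde{L}=q^{-1}(l)\cong\mathbb{P}^1$ collapses both $q$-pullbacks to the trivial bundles $\mathcal{O}_{\widetilde{L}}^{\oplus(k-1)}$ and $\mathcal{O}_{\widetilde{L}}^{\oplus(2n-2k-2)}$, while under the identification $\widetilde{L}=\mathbb{P}(V_{k+1}/V_{k-1})$ one has $(H_k/H_{k-1})|_{\widetilde{L}}\cong\mathcal{O}_{\widetilde{L}}(-1)$ and $(H_{k+1}/H_k)^\vee|_{\widetilde{L}}\cong\mathcal{O}_{\widetilde{L}}(-1)$. Adding these contributions gives
\begin{align*}
T_{\mathcal{U}/X}|_{\widetilde{L}}\cong \mathcal{O}_{\widetilde{L}}(-1)^{\oplus(k-1)}\oplus\mathcal{O}_{\widetilde{L}}(-1)^{\oplus(2n-2k-2)}=\mathcal{O}_{\widetilde{L}}(-1)^{\oplus 2n-k-3},
\end{align*}
as claimed. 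The one step that needs genuine attention, and which I expect to be the main obstacle to a careful writeup, is verifying that the second exact sequence takes the ``$q^*$'' orthogonal form rather than the ``$p^*$'' symplectic form seen in Lemma \ref{ck}; this is exactly where the hypothesis $k\le n-3$ enters, since it keeps $N(k)=\{k-1,k+1\}$ inside the unbranched segment of the $D_n$ diagram so that $H_{k+1}$ is an unambiguous isotropic subbundle and no spinor geometry (which would appear for $k\in\{n-2,n-1,n\}$) enters the picture.
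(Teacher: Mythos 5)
Your proposal is correct and follows essentially the same route as the paper's own proof: the same two tautological sequences on $D_n/P_{k-1,k,k+1}$, the same splitting $T_{\mathcal{U}/X}=\pi_1^*T_{\mathcal{U}_1/X}\oplus\pi_2^*T_{\mathcal{U}_2/X}$ coming from $\mathcal{M}_x=\mathbb{P}^{k-1}\times Q^{2(n-k-1)}$, and the same identification of the two summands, with the rank count $(k-1)+(2n-2k-2)=2n-k-3$. Your closing remark correctly isolates the only point of divergence from the $C_n$ case (the orthogonal $q^*$-form of the second sequence versus the symplectic $p^*$-Euler form) and the role of $k\le n-3$.
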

\begin{proof}
	It's not hard to check that over $\mathcal{U}$, we have the following two exact sequences of vector bundles:
	\begin{align*}
	0\rightarrow (H_k/H_{k-1})^{\vee}\rightarrow p^{\ast}H_k^{\vee} \rightarrow q^{\ast}H_{k-1}^{\vee} \rightarrow 0,
	\end{align*}
	\begin{align*}
	0\rightarrow H_{k+1}/H_k\rightarrow H_{k+1}^{\bot}/H_k\rightarrow q^{*}(H_{k+1}^{\bot}/H_{k+1})\rightarrow 0.
	\end{align*}
	We will consider the following diagram
	\begin{align}
	\xymatrix{
		\mathcal{U}_1=D_n/P_{k-1,k}  \ar[rrdd]_{p_1} &&\mathcal{U}=D_n/P_{k-1,k,k+1}\ar[ll]_{\pi_1}\ar[rr]^{\pi_2}\ar[dd]^{p}  && \mathcal{U}_2=D_n/P_{k,k+1}  \ar[lldd]^{p_2} \\
		\\
		&&X.
	}
	\end{align}
	All the morphisms in the above diagram are projections. For any $x\in X$, the $p_1$-fiber $p_1^{-1}(x)$ is isomorphic to $\mathbb{P}^{k-1}$ and the $p_2$-fiber $p_2^{-1}(x)$ is isomorphic to $Q^{2(n-k-1)}$. Note that for $x\in X$, the projection $q$ induces an isomorphism
	$$q|p^{-1}(x):p^{-1}(x)\rightarrow \mathcal{M}_x=\mathbb{P}^{k-1}\times Q^{2(n-k-1)}.$$
	Hence $p^{-1}(x)=p_1^{-1}(x)\times p_2^{-1}(x)$. So we get
	$$T_{\mathcal{U}/X}=\pi_1^{*}(T_{\mathcal{U}_1/X})\oplus \pi_2^{*}(T_{\mathcal{U}_2/X}).$$
	We mimic the proof of Lemma 5.6 in \cite{D-F-G} to conclude that
	$$\pi_1^{*}(T_{\mathcal{U}_1/X})\cong H_k/H_{k-1}\otimes q^{\ast}H_{k-1}^{\vee}, \pi_2^{*}(T_{\mathcal{U}_2/X})\cong (H_{k+1}/H_k)^{\vee}\otimes q^{*}(H_{k+1}^{\bot}/H_{k+1}).$$
	Therefore, $T_{\mathcal{U}/X}\cong \big(H_k/H_{k-1}\otimes q^{\ast}H_{k-1}^{\vee}\big)\bigoplus \big((H_{k+1}/H_k)^{\vee}\otimes q^{*}(H_{k+1}^{\bot}/H_{k+1})\big)$ and
	$$T_{\mathcal{U}/X}|_{\widetilde{L}}=\mathcal{O}_{\widetilde{L}}(-1)^{\oplus 2n-k-3}.$$
\end{proof}

(2) For $k=n-2$, the even Orthogonal Grassmannian $D_n/P_{n-2}:=OG(n-2,2n)$ parametrizes the $n-2$-dimensional isotropic subspaces in $V$. In this case $\mathcal{M}=D_n/P_{n-3,n-1,n}$ and $\mathcal{U}=D_n/P_{n-3,n-2,n-1,n}$. We have the natural diagram
\begin{align}
\xymatrix{
	\mathcal{U}=D_n/P_{n-3,n-2,n-1,n}\ar[d]^{p}   \ar[r]^-{q} &\mathcal{M}=D_n/P_{n-3,n-1,n}\\
	X=D_n/P_{n-2}
}
\end{align}	
and $\mathcal{M}_x$ is $\mathbb{P}^{n-3}\times \mathbb{P}^{1}\times \mathbb{P}^{1}$.
\begin{lemma}\label{dn-1}
	Let $\widetilde{L}=q^{-1}(l)\subset \mathcal{U}$ for $l\in \mathcal{M}$. For the relative tangent bundle $T_{\mathcal{U}/X}$, we have
	$$T_{\mathcal{U}/X}|_{\widetilde{L}}=\mathcal{O}_{\widetilde{L}}(-1)^{\oplus n-1}.$$
\end{lemma}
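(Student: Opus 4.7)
The plan is to mimic the strategy of Lemmas \ref{bk}, \ref{ck}, and especially \ref{dk} and factor the projection $p\colon \mathcal{U}\to X$ through three intermediate flag manifolds, one for each node of $N(n-2)=\{n-3,n-1,n\}$. Concretely, I would set
\[
\mathcal{U}_1=D_n/P_{n-3,n-2},\quad \mathcal{U}_2=D_n/P_{n-2,n-1},\quad \mathcal{U}_3=D_n/P_{n-2,n},
\]
with natural projections $p_j\colon \mathcal{U}_j\to X$ and $\pi_j\colon \mathcal{U}\to \mathcal{U}_j$ sitting in a diagram over $X$. Geometrically, $p_1^{-1}(x)$ is the $\mathbb{P}^{n-3}$ of hyperplanes in $V_{n-2}$, whereas $p_2^{-1}(x)$ and $p_3^{-1}(x)$ are the two rulings $\mathbb{P}^1$ of the smooth quadric $Q^2\cong\mathbb{P}^1\times\mathbb{P}^1$ of maximal isotropic extensions of $V_{n-2}$ inside $V_{n-2}^{\perp}/V_{n-2}$ (a $4$-dimensional quadratic space).

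Next I would observe, exactly as in the proof of Lemma \ref{dk}, that for every $x\in X$ the fiber $p^{-1}(x)$ factors as a product $p_1^{-1}(x)\times p_2^{-1}(x)\times p_3^{-1}(x)$ which, via $q$, is identified with $\mathcal{M}_x=\mathbb{P}^{n-3}\times\mathbb{P}^1\times\mathbb{P}^1$. This product structure on each fiber yields a direct sum decomposition
\[
T_{\mathcal{U}/X}\;=\;\pi_1^{*}T_{\mathcal{U}_1/X}\;\oplus\;\pi_2^{*}T_{\mathcal{U}_2/X}\;\oplus\;\pi_3^{*}T_{\mathcal{U}_3/X}.
\]
Reducing to each intermediate flag, I would use the three standard universal sequences on $\mathcal{U}$ (for the nodes $n-3$, $n-1$, $n$) and copy the computation already performed in Lemmas \ref{bk} and \ref{dk} to identify
\[
\pi_1^{*}T_{\mathcal{U}_1/X}\cong H_{n-2}/H_{n-3}\otimes q^{*}H_{n-3}^{\vee},
\]
together with rank-one line bundles $\pi_2^{*}T_{\mathcal{U}_2/X}$ and $\pi_3^{*}T_{\mathcal{U}_3/X}$ built out of the two spinor quotients $H_{n-1}/H_{n-2}$ and $H_{n}/H_{n-2}$ tensored with the appropriate pullback from $\mathcal{M}$.

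Finally, restricting to $\widetilde{L}=q^{-1}(l)$: the line $L=p(\widetilde{L})$ is a $\mathbb{P}^1\subset X$ and on $L$ the universal bundles $H_{n-3}$, $H_{n-1}$, $H_n$ are all pulled back from $\mathcal{M}$ (hence trivial on $\widetilde{L}$), while $H_{n-2}/H_{n-3}|_L\cong \mathcal{O}_L(-1)$ and analogously for the two spinor quotients. Each summand therefore restricts to $\mathcal{O}_{\widetilde{L}}(-1)^{\oplus\mathrm{rk}}$, yielding in total $\mathcal{O}_{\widetilde{L}}(-1)^{\oplus(n-3)+1+1}=\mathcal{O}_{\widetilde{L}}(-1)^{\oplus n-1}$, as claimed. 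The main obstacle I anticipate is the precise identification of the two rank-one pieces $\pi_2^{*}T_{\mathcal{U}_2/X}$ and $\pi_3^{*}T_{\mathcal{U}_3/X}$, because the two spinor directions are not of straightforward Grassmannian type; one must either work inside the quadric bundle $\mathbb{P}(H_{n-2}^{\perp}/H_{n-2})$ over $X$ and identify each ruling with a projective bundle whose tautological line bundle is one of the two spinor quotients, or else translate Lie-theoretically between the paper's convention for $H_{n-1},H_n$ on $D_n$ and the geometric description of the two rulings.
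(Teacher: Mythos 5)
Your proposal is correct and takes essentially the same route as the paper: the paper likewise decomposes $T_{\mathcal{U}/X}$ according to the product structure of the fiber $\mathcal{M}_x=\mathbb{P}^{n-3}\times\mathbb{P}^1\times\mathbb{P}^1$, the only cosmetic difference being that it groups the two spinor directions into a single factor $\mathcal{U}_2=D_n/P_{n-2,n-1,n}$ before splitting $\pi_2^{*}T_{\mathcal{U}_2/X}$ into the two line bundles $(H_{n-1}/H_{n-2})^{\vee}\otimes q^{*}(\mathcal{O}_{\mathcal{M}}^{\oplus n}/H_{n-1})$ and its primed analogue. The identification of the rank-one spinor pieces that you flag as the main obstacle is handled in the paper exactly as you suggest, via the two universal exact sequences for $H_{n-1}/H_{n-2}$ and $H_{n-1}'/H_{n-2}$ inside $\mathcal{O}_{\mathcal{U}}^{\oplus n}/H_{n-2}$.
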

\begin{proof}
	It's not hard to check that over $\mathcal{U}$, we have the following three exact sequences of vector bundles:
	\begin{align*}
	0\rightarrow (H_{n-2}/H_{n-3})^{\vee}\rightarrow p^{\ast}H_{n-2}^{\vee} \rightarrow q^{\ast}H_{n-3}^{\vee} \rightarrow 0,
	\end{align*}
	\begin{align*}
	0\rightarrow H_{n-1}/H_{n-2}\rightarrow \mathcal{O}_{\mathcal{U}}^{\oplus n}/H_{n-2}\rightarrow q^{*}(\mathcal{O}_{\mathcal{M}}^{\oplus n}/H_{n-1})\rightarrow 0.
	\end{align*}
	\begin{align*}
	0\rightarrow H_{n-1}'/H_{n-2}\rightarrow \mathcal{O}_{\mathcal{U}}^{\oplus n}/H_{n-2}\rightarrow q^{*}(\mathcal{O}_{\mathcal{M}}^{\oplus n}/H_{n-1}')\rightarrow 0.
	\end{align*}
	We will consider the following diagram
	\begin{align}
	\xymatrix{
		\mathcal{U}_1=D_n/P_{n-3,n}  \ar[rrdd]_{p_1} &&	\mathcal{U}=D_n/P_{n-3,n-2,n-1,n}\ar[ll]_{\pi_1}\ar[rr]^{\pi_2}\ar[dd]^{p}  && \mathcal{U}_2=D_n/P_{n-2,n-1,n}  \ar[lldd]^{p_2} \\
		\\
		&&X.
	}
	\end{align}
	All the morphisms in the above diagram are projections. For any $x\in X$, the $p_1$-fiber $p_1^{-1}(x)$ is isomorphic to $\mathbb{P}^{n-3}$ and the $p_2$-fiber $p_2^{-1}(x)$ is isomorphic to $\mathbb{P}^1\times\mathbb{P}^1$. Note that for $x\in X$, the projection $q$ induces an isomorphism
	$$q|_{p^{-1}(x)}:p^{-1}(x)\rightarrow \mathcal{M}_x=\mathbb{P}^{n-3}\times\mathbb{P}^1\times\mathbb{P}^1.$$
	Hence $p^{-1}(x)=p_1^{-1}(x)\times p_2^{-1}(x)$. So we get
	$$T_{\mathcal{U}/X}=\pi_1^{*}(T_{\mathcal{U}_1/X})\oplus \pi_2^{*}(T_{\mathcal{U}_2/X}).$$
	We mimic the proof of Lemma 5.6 in \cite{D-F-G} to conclude that
	$$\pi_1^{*}(T_{\mathcal{U}_1/X})\cong H_{n-2}/H_{n-3}\otimes q^{\ast}H_{n-3}^{\vee}$$
	and
	$$ \pi_2^{*}(T_{\mathcal{U}_2/X})\cong (H_{n-1}/H_{n-2})^{\vee}\otimes q^{*}(\mathcal{O}_{\mathcal{M}}^{\oplus n}/H_{n-1})\oplus (H_{n-1}'/H_{n-2})^{\vee}\otimes q^{*}(\mathcal{O}_{\mathcal{M}}^{\oplus n}/H_{n-1}').$$
	Therefore,
	\begin{align*}
	&T_{\mathcal{U}/X}\cong
\big(H_{n-2}/H_{n-3}\otimes q^{\ast}H_{n-3}^{\vee}\big)\bigoplus \\
&\big((H_{n-1}/H_{n-2})^{\vee}\otimes q^{*}(\mathcal{O}_{\mathcal{M}}^{\oplus n}/H_{n-1})\big)\bigoplus  \big((H_{n-1}'/H_{n-2})^{\vee}\otimes q^{*}(\mathcal{O}_{\mathcal{M}}^{\oplus n}/H_{n-1}')\big)
	\end{align*}
	 and
	$T_{\mathcal{U}/X}|_{\widetilde{L}}=\mathcal{O}_{\widetilde{L}}(-1)^{\oplus n-1}.$
\end{proof}

(3) For $k=n-1$ or $n$, $D_n/P_k$ is one of the two irreducible components of the even Orthogonal Grassmannian $OG(n,2n)$. In this case $\mathcal{M}=D_n/P_{n-2}$ and $\mathcal{U}=D_n/P_{n-2,k}$. We have the natural diagram
\begin{align}
\xymatrix{
\mathcal{U}=D_n/P_{n-2,k}\ar[d]^{p}   \ar[r]^-{q} &\mathcal{M}=D_n/P_{n-2}\\
	X=D_n/P_k
}
\end{align}	
and $\mathcal{M}_x$ is Grassmannian $G(2,n)$.
\begin{lemma}\label{dn}
	Let $\widetilde{L}=q^{-1}(l)\subset \mathcal{U}$ for $l\in \mathcal{M}$. For the relative tangent bundle $T_{\mathcal{U}/X}$, we have
	$$T_{\mathcal{U}/X}|_{\widetilde{L}}=\mathcal{O}_{\widetilde{L}}(-1)^{\oplus 2(n-2)}.$$
\end{lemma}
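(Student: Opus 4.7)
The plan is to follow the template established by the previous lemmas in this subsection, especially Lemma~\ref{dk}. First I would recognize the projection $p: \mathcal{U} = D_n/P_{n-2,k} \to X = D_n/P_k$ (for $k \in \{n-1, n\}$) as a Grassmann-type bundle whose fiber over $x \in X$ is naturally the Grassmannian $G(n-2, (H_n)_x) \cong G(2,n)$ of codimension-$2$ subspaces in the isotropic $n$-plane corresponding to $x$. The tautological short exact sequence
$$0 \to H_{n-2} \to p^{*}H_{n} \to p^{*}H_{n}/H_{n-2} \to 0$$
on $\mathcal{U}$ restricts on each $p$-fiber to the universal sequence on $G(2,n)$. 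Arguing as in the proof of Lemma~\ref{dk}, together with the standard description of the tangent bundle of a Grassmann bundle, one deduces
$$T_{\mathcal{U}/X} \;\cong\; H_{n-2}^{\vee} \otimes \bigl(p^{*}H_{n}/H_{n-2}\bigr),$$
a rank $2(n-2)$ bundle, which already matches the rank of the asserted splitting.

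Next I would restrict to $\widetilde L = q^{-1}(l)$ for $l \in \mathcal{M} = D_n/P_{n-2}$. Since $H_{n-2}$ is pulled back from $\mathcal{M}$ via $q$, its restriction to the $q$-fiber $\widetilde L$ is trivial of rank $n-2$. The proof therefore reduces to showing that the rank-$2$ bundle $F := (p^{*}H_{n}/H_{n-2})|_{\widetilde L}$ is isomorphic to $\mathcal{O}_{\widetilde L}(-1)^{\oplus 2}$, because then
$$T_{\mathcal{U}/X}|_{\widetilde L} \;\cong\; \mathcal{O}_{\widetilde L}^{\oplus n-2} \otimes \mathcal{O}_{\widetilde L}(-1)^{\oplus 2} \;\cong\; \mathcal{O}_{\widetilde L}(-1)^{\oplus 2(n-2)}.$$

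The main technical step, and the one I expect to carry the real content of the argument, is this identification of $F$. Geometrically, points of $\widetilde L$ parametrize isotropic $n$-planes containing the fixed isotropic $(n-2)$-plane $l$; by passing to the quotient $W := l^{\perp}/l$, a $4$-dimensional nondegenerate quadratic space, these are in bijection with the Lagrangian $2$-planes in $W$ forming one of the two families, and $F$ is precisely the induced universal rank-$2$ subbundle of the trivial bundle $W \otimes \mathcal{O}_{\widetilde L}$. The determinant $\det F$ can be computed from the degree of the embedding $\widetilde L \hookrightarrow G(2,W) \hookrightarrow \mathbb{P}(\Lambda^{2} W)$, which is $2$ because one family of Lagrangian planes embeds into the Plücker $\mathbb{P}^{5}$ as a smooth conic; hence $\det F \cong \mathcal{O}_{\widetilde L}(-2)$. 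Being a subbundle of a trivial bundle, $F$ admits no positive summand, so the splitting must be either $\mathcal{O}(-1) \oplus \mathcal{O}(-1)$ or $\mathcal{O} \oplus \mathcal{O}(-2)$. The latter I would rule out by observing that no fixed nonzero vector of $W$ can lie in every Lagrangian plane of the family, since these planes collectively span $W$; equivalently, $F$ admits no nowhere-vanishing global section. This forces $F \cong \mathcal{O}_{\widetilde L}(-1)^{\oplus 2}$ and completes the argument.
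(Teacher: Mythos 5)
Your proposal follows essentially the same route as the paper: identify $p:\mathcal{U}\to X$ as the Grassmann bundle of $(n-2)$-planes in $H_n$, deduce $T_{\mathcal{U}/X}\cong (H_n/H_{n-2})\otimes q^{*}H_{n-2}^{\vee}$ from the tautological sequence, and restrict to $\widetilde L$ using the triviality of $q^{*}H_{n-2}$ on $q$-fibers. You go further than the paper in actually justifying the last step, namely that $(H_n/H_{n-2})|_{\widetilde L}\cong\mathcal{O}_{\widetilde L}(-1)^{\oplus 2}$ via the conic embedding of one ruling of $G(2,W)$ and the exclusion of a trivial summand; the paper simply asserts the final splitting.

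One small correction to that last step: the fact that the Lagrangian planes of one family collectively span $W$ does not by itself preclude a common nonzero vector (all $2$-planes through a fixed vector of $\mathbb{C}^4$ also span $\mathbb{C}^4$). The right reason is that two distinct maximal isotropic subspaces of a $4$-dimensional quadratic space lying in the same family intersect only in $0$ --- equivalently, the lines of one ruling of the quadric surface are pairwise disjoint --- so $F$ has no nonzero global section at all, which rules out the $\mathcal{O}\oplus\mathcal{O}(-2)$ alternative. With that substitution the argument is complete.
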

\begin{proof}
	It's not hard to check that over $\mathcal{U}$, we have the following exact sequence of vector bundles
$$0\rightarrow (H_n/H_{n-2})^{\vee}\rightarrow p^{\ast}H_n^{\vee} \rightarrow q^{\ast}H_{n-2}^{\vee} \rightarrow 0.$$
Restricting it to the $p$-fiber $p^{-1}(x)\cong G(2,n)$ is just the Euler sequence on $G(2,n)$. In fact, the projection $p$ identifies $\mathcal{U}$ in a canonical fashion with the Grassmannian bundle $G(2,H_n)$ of $X$. Hence
$T_{\mathcal{U}/X}\cong H_n/H_{n-2}\otimes q^{\ast}H_{n-2}^{\vee}$ and
$$T_{\mathcal{U}/X}|_{\widetilde{L}}=\mathcal{O}_{\widetilde{L}}(-1)^{\oplus 2(n-2)}.$$
\end{proof}
\subsection{The exceptional simple Lie algebra}
We say $G$ is of exceptional type if its Dynkin diagram is of type $E_n~(n=6,7,8)$, $F_4$ or $G_2$. Due to the complexity of the geometry of $X=G/P_k$, it is difficult to write down the specific form of the relative tangent bundle $T_{\mathcal{U}/X}$, so here we use the method in \cite{M-O-C} to calculate the splitting type of $T_{\mathcal{U}/X}|_{q^{-1}(l)}$.

When $G$ is simple, the fundamental root system will be written as in the standard reference \cite{R-C}. In the case of $E_n~(n=6,7,8)$ type, the mark of the nodes of the Dynkin diagam in the reference is different from our paper, so is the expression of the fundamental root system. So we write down the fundamental root system corresponding to the diagram of $E_n~(n=6,7,8)$-type in our paper as follow:

Let $V$ be a real vector space with dim $V=8$ with orthogonal basis $e_i, i=1,\ldots,8$. Then the vectors
\begin{align*}
\alpha_1=&-\frac{1}{2}(e_1+\cdots+e_8),~\alpha_2=e_6-e_7,\\
\alpha_3=&e_6+e_7,~\alpha_4=e_5-e_6,\\
\alpha_5=&e_4-e_5, ~\alpha_6=e_3-e_4,\\
\alpha_7=&e_2-e_3,~\alpha_8=e_1-e_2
\end{align*}
form a fundamental root system of type $E_8$. Since $E_6,E_7$ can be identified canonically with subsystem of $E_8$, so $\alpha_1,\ldots,\alpha_n$ form a fundamental root system of type $E_n (n=6,7)$.

 It's well known that homogeneous vector bundles especially the tangent bundle on a rational homogeneous space $G/P$ are determined by representations of the Lie algebra $\mathfrak{p}$. Restricting the relative tangent bundle $T_{\mathcal{U}/X}$ to a line $\widetilde{L}=q^{-1}(l)$, we will get a flag bundle $H/P_{N(k)}$ and $D(H)=D\backslash {k}$ with tag, where the tag is the set of intersection numbers of a minimal section of  the associated $H/P_{N(k)}$-flag bundle with the relative canonical divisors (see \cite{O-C-W} Proposition 3.17). We refer to \cite{O-C-W, M-O-C-W} for a complete account on the tag of a flag bundle. Assume $H/P_{N(k)}:=G_1/P_{I_1}\times G_2/P_{I_2}\times \cdots \times G_m/P_{I_m}$ ($m\le 3$) where $G_i$ is simple Lie group. Calculating weights that give the tangent bundle of $G_i/P_{I_i}~(1\le i\le m)$, which are all the positive roots of the Lie algebra of $G_i$ that contain the root adjacent to $\alpha_k$. Putting together the above tag and weights, we will get the splitting type of $T_{\mathcal{U}/X}|_{\widetilde{L}}$. We refer to (\cite{M-O-C-W}, page 5-7) for a complete account. By calculation, we have the tables (Table \ref{E6}, \ref{E7}, \ref{E8}, \ref{FG}) in Appendix.

Professor L. E. Sol{\'{a}} Conde told us the following example for calculating the relative tangent bundle restricting to an isotropic line by personal correspondance.

 \begin{ex}
\emph{ Let $X=E_7/P_7$ and $\mathcal{U}=E_7/P_{6,7}$. Then $H/P_{N(7)}=E_6/P_{6}$. Let $E_7/P_I:=E_7/P_{\{1,2,3,4,5,6,7\}}$.
 The isotropic lines in $E_7/P_{i}$ are the image of the fibers of the $\mathbb{P}^1$-bundle
 \[p^i:=E_7/P_I\rightarrow E_7/P_{I\backslash i},~ (i=1,2, \ldots 7)\]
  into $E_7/P_{7}$ via the natural map $$\pi^i: E_7/P_I\rightarrow E_7/P_{i}.$$
Let $-K_i~ (i=1,2, \ldots 7)$ be the relative anticanonical bundles of $p^i$ and $C_i$ be a fiber of $p^i$. When mapped to the varieties of Picard number one $E_7/P_i$'s, $C_i$'s are the isotropic lines, where $1\le i\le 7$. The $C_7$ can be regarded as a minimal section of the fibration morphism $E_7/P_{6,7}\rightarrow E_7/P_7 $ when it restricts to $\pi^7(C_7)$.
By \cite{O-C-W-W} Proposition 2.13, the matrix $(-K_iC_j)$ is the Cartan matrix of $E_7$. The tag is exactly $$(K_1C_7,...,K_6C_7)=(0,0,0,0,0,1),$$ which is the $7$-th column of the Cartan matrix without the last term and with the signs changed.
The weights are coefficients of all the positive roots of the Lie algebra of $E_6$ in term of the linear combinations of the fundamental roots that contain the $6$-th root as a summand; that is the roots of the form $m_1\alpha_1+\cdots+m_6\alpha_6$ with $m_i\geq 0,~ m_6>0$, where $\alpha_i$'s are the fundamental roots ($i=1,\ldots, 6$) (See Table \ref{E7} in Appendix for the values of $m_i$). So
\[T_{\mathcal{U}/X}|_{\widetilde{L}}=\mathcal{O}_{\widetilde{L}}(-1)^{\oplus16}.\]
}
 \end{ex}

 Combining Table \ref{E6}, \ref{E7}, \ref{E8}, \ref{FG} we can get the following Proposition.
\begin{prop}\label{fk}
	Suppose $G$ to be of exceptional type and $X=G/P_k$. Let $\widetilde{L}=q^{-1}(l)\subset \mathcal{U}$ for $l\in \mathcal{M}$.
	When $X=F_4/P_3,F_4/P_4,G_2/P_1$, $T_{\mathcal{U}/X}|_{\widetilde{L}}$ has the following form  respectively:
	$$\mathcal{O}_{\widetilde{L}}(-2)^{\oplus 2}\oplus\mathcal{O}_{\widetilde{L}}(-1),\mathcal{O}_{\widetilde{L}}(-2)^{\oplus 3}\oplus\mathcal{O}_{\widetilde{L}}(-1)^{\oplus 3}, \mathcal{O}_{\widetilde{L}}(-3).$$
Otherwise $$T_{\mathcal{U}/X}|_{\widetilde{L}}=\mathcal{O}_{\widetilde{L}}(-1)^{N}, ~\text{where}~N=dim~ \mathcal{U}-dim X.$$
\end{prop}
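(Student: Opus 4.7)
The plan is to compute $T_{\mathcal{U}/X}|_{\widetilde{L}}$ via the tag-and-weights machinery of Occhetta--Sol\'a Conde--Wi\'sniewski (\cite{O-C-W} Proposition 3.17) and Mu\~noz--Occhetta--Sol\'a Conde--Wi\'sniewski (\cite{M-O-C-W}), exactly as illustrated by the worked example for $X=E_7/P_7$ just before the statement. The key observation is that $\widetilde{L}=q^{-1}(l)$ is a fiber of the natural $\mathbb{P}^1$-bundle $\mathcal{U}=G/P_{k,N(k)}\to \mathcal{M}=G/P_{N(k)}$, and $T_{\mathcal{U}/X}|_{\widetilde{L}}$ is a homogeneous bundle on $\widetilde{L}\simeq \mathbb{P}^1$ determined by a $\mathfrak{p}$-submodule of the tangent representation.

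First, I would set up the combinatorics: the tag of $\widetilde{L}$ inside the $H/P_{N(k)}$-flag bundle structure is read off as the $k$-th column of the Cartan matrix of $G$ (with the $k$-th entry removed and the signs changed), following the prescription in \cite{O-C-W-W} Proposition 2.13. The weights that contribute to $T_{\mathcal{U}/X}|_{\widetilde{L}}$ are the positive roots of the Lie algebra of $H/P_{N(k)}$ that contain, as a summand in their expansion in the fundamental root basis, a root of $N(k)$; the degree of the corresponding $\mathcal{O}_{\widetilde{L}}(d)$ summand is obtained by pairing this weight with the tag. The number of such weights equals $\dim \mathcal{U}-\dim X=N$.

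Next, I would execute this recipe case by case for every pair $(G,k)$ with $G$ of exceptional type $E_6$, $E_7$, $E_8$, $F_4$, $G_2$ and $k\in D$, using the explicit fundamental root systems recalled just before the statement. The computations are tabulated in Tables~\ref{E6}, \ref{E7}, \ref{E8}, \ref{FG} of the Appendix. In every case, the contribution of a weight to the degree is $-1$ unless the edge joining $\alpha_k$ to a node in $N(k)$ is a multiple edge pointing toward $\alpha_k$, i.e.\ unless $\alpha_k$ is an exposed short root. Since the exceptional Dynkin diagrams have multiple edges only in type $F_4$ and $G_2$, the exposed-short-root configurations occur exactly at $(F_4,3)$, $(F_4,4)$ and $(G_2,1)$; elsewhere all summands have degree $-1$ and $T_{\mathcal{U}/X}|_{\widetilde{L}}=\mathcal{O}_{\widetilde{L}}(-1)^{\oplus N}$.

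For the three exceptional configurations the arrow in the diagram forces certain pairings of the weights with the tag to take the value $2$ (for $F_4$) or $3$ (for $G_2$), producing the $\mathcal{O}_{\widetilde{L}}(-2)$ and $\mathcal{O}_{\widetilde{L}}(-3)$ summands in the stated splittings, while the remaining weights still contribute $\mathcal{O}_{\widetilde{L}}(-1)$; a direct counting of weights then matches the multiplicities $\mathcal{O}(-2)^{\oplus 2}\oplus\mathcal{O}(-1)$, $\mathcal{O}(-2)^{\oplus 3}\oplus\mathcal{O}(-1)^{\oplus 3}$, and $\mathcal{O}(-3)$ respectively. The main obstacle here is purely organizational: one must carry out the tag-and-weight pairing uniformly and without error across all ten marked Dynkin diagrams of exceptional type, which is precisely what the Appendix tables record; once they are assembled, the Proposition follows by inspection.
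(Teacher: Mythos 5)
Your proposal is correct and follows essentially the same route as the paper: the paper computes $T_{\mathcal{U}/X}|_{\widetilde L}$ by the tag-and-weights method of \cite{O-C-W, M-O-C-W} (tag $=$ the $k$-th column of the Cartan matrix with the $k$-th entry removed and signs changed; weights $=$ the positive roots of the semisimple part of $P_{N(k)}$ containing a node of $N(k)$), records the results in Tables \ref{E6}--\ref{FG}, and reads off the Proposition, with the non-$(-1,\dots,-1)$ splittings occurring exactly at the short-root markings $F_4/P_3$, $F_4/P_4$, $G_2/P_1$. Your pairing computations for these three cases (degrees $-2,-2,-1$; $-2,-2,-2,-1,-1,-1$; $-3$) agree with the paper's tables.
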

Summing up, we have obtained the following
\begin{prop}\label{tangent}
Let $G$ be a simple Lie group with the Dynkin diagram $\mathcal{D}$ and $\alpha_k$ is a long root of $\mathcal{D}$. Denote by $X$ the generalized Grassmannian by marking on $\mathcal{D}$ the node $k$. Then for the relative tangent bundle $T_{\mathcal{U}/X}$ (Notations as Section 4), we have
$$T_{\mathcal{U}/X}|_{\widetilde{L}}=\mathcal{O}_{\widetilde{L}}(-1)^{N},~\text{where}~N=dim~ \mathcal{U}-dim X.$$
\end{prop}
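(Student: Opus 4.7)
The plan is to establish the proposition by a case-by-case verification across the classification of simple Lie algebras, in each case invoking the explicit splitting computation for $T_{\mathcal{U}/X}|_{\widetilde{L}}$ carried out earlier in Section 4. Since every simple Lie algebra has Dynkin diagram of one of the types $A_n, B_n, C_n, D_n, E_6, E_7, E_8, F_4, G_2$, and within each type the long-root nodes can be enumerated explicitly, the strategy is to observe that the appropriate lemma already in the text delivers precisely $\mathcal{O}_{\widetilde{L}}(-1)^{\oplus N}$ whenever $\alpha_k$ is long.

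For the simply-laced classical diagrams, every root is long, so the claim is immediate from Lemma 5.6 of \cite{D-F-G} in type $A_n$ and from Lemmas \ref{dk}, \ref{dn-1}, \ref{dn} in type $D_n$ (covering respectively $1\le k\le n-3$, $k=n-2$, and $k\in\{n-1,n\}$). For $B_n$, the long roots are exactly $\alpha_1,\ldots,\alpha_{n-1}$, and Lemmas \ref{b1} and \ref{bk} uniformly produce $\mathcal{O}_{\widetilde{L}}(-1)^{\oplus(2n-k-2)}$; the short-root case $k=n$ handled in Lemma \ref{bn} is excluded by hypothesis. For $C_n$ the unique long root is $\alpha_n$, and Lemma \ref{cn} gives $\mathcal{O}_{\widetilde{L}}(-1)^{\oplus(n-1)}$, again as desired, while the short-root cases in Lemmas \ref{c1} and \ref{ck} (which exhibit an $\mathcal{O}(-2)$ summand) are excluded.

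For the exceptional types, $E_6, E_7, E_8$ are simply laced so all roots are long; in $F_4$ the long roots are $\alpha_1, \alpha_2$, and in $G_2$ the long root is $\alpha_2$. Proposition \ref{fk} together with the appendix Tables \ref{E6}, \ref{E7}, \ref{E8}, \ref{FG} then yields $\mathcal{O}_{\widetilde{L}}(-1)^{\oplus N}$ in every long-root case, since the three exceptional non-uniform splittings identified by Proposition \ref{fk} (namely $F_4/P_3$, $F_4/P_4$, $G_2/P_1$) correspond to precisely the short-root nodes in their respective diagrams.

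The only part of the argument worth a conceptual remark is the underlying reason for this uniformity: an $\mathcal{O}(-c)$ summand with $c\ge 2$ in $T_{\mathcal{U}/X}|_{\widetilde{L}}$ comes from an $\alpha_k$-root string of length greater than one passing through the weights defining the relative tangent bundle, and such a long string cannot exist when $\alpha_k$ is long (by the standard fact that in a root system all $\alpha$-strings through a root $\beta$ have length $|1-\langle\beta,\alpha^\vee\rangle|\le 2$, with equality $2$ requiring $\alpha$ short relative to $\beta$). Packaging this into a case-free proof would be the only subtle point; however, since the individual splittings have already been computed lemma by lemma in Section 4, the present proof reduces to an organized tabulation, with no single difficult computation remaining.
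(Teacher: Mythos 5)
Your proof is correct and is essentially the paper's own argument: the paper's proof of Proposition \ref{tangent} simply cites Lemmas \ref{b1}, \ref{bk}, \ref{cn}, \ref{dk}, \ref{dn-1}, \ref{dn} and Proposition \ref{fk}, exactly the case-by-case tabulation you carry out. Your version is in fact more careful than the paper's, since you explicitly match the long-root nodes in each Dynkin type against the lemmas and verify that the excluded short-root cases are precisely those exhibiting an $\mathcal{O}(-c)$ summand with $c\ge 2$.
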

\begin{proof}
It is obviously from Lemma \ref{b1},\ref{bk},\ref{cn},\ref{dk},\ref{dn-1},\ref{dn} and Proposition \ref{fk}.
\end{proof}
\subsection{The generalization of the Grauert-M$\ddot{\text{u}}$lich-Barth theorem}
The construction of subsheaves in holomorphic vector bundles plays an important role in the proof of the generalized Grauert-M$\ddot{\text{u}}$lich-Barth theorem. A generalization of the Grauert-M$\ddot{\text{u}}$lich-Barth theorem to normal projective varieties in characteristic zero is proved (see \cite{H-L} Theorem 3.1.2). Since the theorem in the book is for any normal projective variety, the bound is pretty coarse. In this section, we will find the explicit bound for rational homogeneous spaces.

The following Descent Lemma provides a way for us to prove the existence of subsheaves.
\begin{lemma}(Descent Lemma \cite{O-S-S})\label{Descent}
	Let $X$, $Y$ be nonsingular varieties over $k$, $f:X\rightarrow Y$ be a surjective submersion with connected fibers and $E$ be an algebraic $r$-bundle over $Y$. Let $\widetilde{K}\subset f^{\ast}E$ be a subbundle of rank $t$ in $f^{\ast}E$ and $\widetilde{Q}=f^{\ast}E/\widetilde{K}$ be its quotient. If \[
	Hom(T_{X/Y},\mathcal{H}om(\widetilde{K},\widetilde{Q}))=0,
	\]
	then $\widetilde{K}$ is the form $\widetilde{K}=f^{\ast}K$ for some algebraic subbundle $K\subset E$ of rank $t$.
\end{lemma}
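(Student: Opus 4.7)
The plan is to construct a relative second fundamental form for the subbundle $\widetilde{K}\subset f^{\ast}E$ and to show that the hypothesis of the lemma forces it to vanish, which in turn forces $\widetilde{K}$ to be constant along the fibers of $f$. Since $f^{\ast}E$ is pulled back from $Y$, it carries a canonical flat partial connection
\[
\nabla_{rel}: f^{\ast}E \longrightarrow f^{\ast}E \otimes \Omega^{1}_{X/Y}
\]
in the vertical direction, whose horizontal sections are exactly the pullbacks of local sections of $E$.

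First I would define the relative second fundamental form $\mathrm{II}_{rel}$ as the composite
\[
\widetilde{K} \hookrightarrow f^{\ast}E \xrightarrow{\nabla_{rel}} f^{\ast}E \otimes \Omega^{1}_{X/Y} \twoheadrightarrow \widetilde{Q} \otimes \Omega^{1}_{X/Y}.
\]
A standard Leibniz-rule computation shows that, although $\nabla_{rel}$ is only a differential operator, this composite is $\mathcal{O}_X$-linear because the $\widetilde{K}$-direction of the derivation cancels against the quotient. Hence $\mathrm{II}_{rel}$ determines an element of $\mathrm{Hom}\bigl(\widetilde{K},\widetilde{Q}\otimes \Omega^{1}_{X/Y}\bigr) \cong \mathrm{Hom}\bigl(T_{X/Y},\mathcal{H}om(\widetilde{K},\widetilde{Q})\bigr)$, which vanishes by the hypothesis of the lemma.

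Next I would exploit $\mathrm{II}_{rel}=0$ to conclude that $\widetilde{K}$ is preserved by $\nabla_{rel}$, so on every fiber $F=f^{-1}(y)$ the restriction $\widetilde{K}|_F$ is a flat subbundle of the canonically trivial bundle $f^{\ast}E|_F\cong E_y\otimes \mathcal{O}_F$. Because $F$ is connected, a flat subbundle of a trivial bundle is itself trivial and has the form $K_y\otimes \mathcal{O}_F$ for a unique subspace $K_y\subset E_y$. Setting $K:=f_{\ast}\widetilde{K}$ and working in a local trivialization of the submersion $f$ (which exists since $f$ is a submersion), these subspaces $K_y$ fit together into a locally free subsheaf of $E$ of rank $t$, and the adjunction map $f^{\ast}K\to \widetilde{K}$ is seen to be an isomorphism by checking on each fiber.

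The main obstacle is the verification that the naive candidate $K=f_{\ast}\widetilde{K}$ is actually locally free of the expected rank rather than merely a coherent subsheaf of $E$. This requires reducing, via a local trivialization guaranteed by the submersion hypothesis, to the model situation $f\colon Y\times Z\to Y$ with $Z$ connected, and then invoking that a subbundle of a trivial family which is constant on each fiber literally descends to the base. All other steps — the $\mathcal{O}_X$-linearity of $\mathrm{II}_{rel}$, the adjunction with $T_{X/Y}$, and the fiberwise triviality of parallel subbundles — are formal once the relative connection $\nabla_{rel}$ on $f^{\ast}E$ is put in place.
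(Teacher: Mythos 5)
The paper does not prove this lemma at all; it is quoted verbatim from Okonek--Schneider--Spindler, so there is no internal proof to compare against. Your outline reproduces the classical argument from that reference: the canonical flat relative connection on a pulled-back bundle, the $\mathcal{O}_X$-linearity of the relative second fundamental form $\mathrm{II}_{rel}\in \mathrm{Hom}\bigl(\widetilde{K},\widetilde{Q}\otimes\Omega^1_{X/Y}\bigr)\cong \mathrm{Hom}\bigl(T_{X/Y},\mathcal{H}om(\widetilde{K},\widetilde{Q})\bigr)$, its vanishing under the hypothesis, and the resulting $\nabla_{rel}$-invariance and fiberwise constancy of $\widetilde{K}$. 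All of that is correct and is exactly the intended route.

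The one step that does not survive scrutiny as written is the final descent. A surjective submersion is locally a product only on the \emph{source}: there is no open $V\subset Y$ in general with $f^{-1}(V)\cong V\times Z$ (consider $\mathbb{C}^2\setminus\{0\}\to\mathbb{C}$, $(x,y)\mapsto x$), so ``a local trivialization of the submersion'' is not available, and for non-proper fibers $f_{\ast}\widetilde{K}$ need not even be coherent, so $K:=f_{\ast}\widetilde{K}$ is not a usable definition of the descended bundle. The standard repair is to encode $\widetilde{K}\subset f^{\ast}E$ as a $Y$-morphism $\sigma\colon X\to \mathrm{Grass}_t(E)$ into the relative Grassmannian bundle; your fiberwise-constancy argument says $\sigma$ is constant on the (connected) fibers of $f$, and since a submersion admits local sections through every point of $X$, $\sigma$ factors as $\tau\circ f$ for a morphism $\tau\colon Y\to\mathrm{Grass}_t(E)$ (define $\tau=\sigma\circ s$ on a local section $s$ and check independence of $s$ using constancy on fibers). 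Then $K:=\tau^{\ast}(\text{tautological subbundle})$ is a rank-$t$ subbundle of $E$ with $f^{\ast}K=\widetilde{K}$ by construction, with no pushforward or local triviality needed. With that substitution your proof is complete; one should also note that the existence of flat local frames in step two is an analytic statement, harmless here since everything is over $\mathbb{C}$.
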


Follow the previous notations. Let $G$ be a simple Lie group with the Dynkin diagram $\mathcal{D}$ and $\alpha_k$ is a root of $\mathcal{D}$. Let's consider the standard diagram associated to $X$
\begin{align}
\xymatrix{
	\mathcal{U}\ar[d]^{p}   \ar[r]^-{q} & \mathcal{M}\\
	X.
}
\end{align}	
\begin{thm}
 Let $X$ be a generalized Grassmannian by marking on $\mathcal{D}$ the node $k$ and $\alpha_k$ be a long root of $\mathcal{D}$. Let $E$ be a holomorphic $r$-bundle over $X$ of type $\underline{a}_E=(a_1,\ldots,a_r), a_1\geq\cdots\geq a_r$. If for some $t<r$,
$$a_t-a_{t+1}\geq 2~~for ~some ~t<r,$$
then there is a normal subsheaf $K\subset E$ of rank $t$ with the following properties: over the open set $V_E=p(q^{-1}(U_E))\subset X$, where $U_E$ is an open set in $\mathcal{M}$, the sheaf $K$ is a subbundle of $E$, which on the line $L\subset X$ given by $l\in U_E$ has the form
$$K|_L\cong\oplus_{j=1}^{t}\mathcal{O}_L(a_j).$$
\end{thm}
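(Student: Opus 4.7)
The plan is to construct $K$ by descending a subbundle of $p^{*}E$ from the universal family $\mathcal{U}$ down to $V_E\subset X$ via the morphism $p:\mathcal{U}\to X$, using the Descent Lemma~\ref{Descent} in combination with the splitting computation for $T_{\mathcal{U}/X}$ along fibers of $q$ furnished by Proposition~\ref{tangent}.

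First, I would invoke semicontinuity of the splitting type along the unsplit family $\mathcal{M}$ to obtain a non-empty open subset $U_{E}\subset\mathcal{M}$ over which $E|_{L}\cong\bigoplus_{j=1}^{r}\mathcal{O}_{L}(a_{j})$ is of generic splitting type. Set $V_{E}:=p(q^{-1}(U_{E}))$; then $p$ restricts to a surjective submersion $q^{-1}(U_{E})\to V_{E}$ whose fibers are open in the irreducible varieties $\mathcal{M}_{x}$ and hence connected. On $q^{-1}(U_{E})$, the relative Harder--Narasimhan filtration of $p^{*}E$ with respect to $q$, cut at the index $t$ (legitimate because the gap $a_{t}>a_{t+1}$ is strict), produces a subbundle $\widetilde{K}\subset p^{*}E|_{q^{-1}(U_{E})}$ of rank $t$ whose restriction to every $q$-fiber $\widetilde{L}=q^{-1}(l)$ is $\bigoplus_{j=1}^{t}\mathcal{O}_{\widetilde{L}}(a_{j})$; letting $\widetilde{Q}:=p^{*}E|_{q^{-1}(U_{E})}/\widetilde{K}$, one has $\widetilde{Q}|_{\widetilde{L}}\cong\bigoplus_{j=t+1}^{r}\mathcal{O}_{\widetilde{L}}(a_{j})$.

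Next I would apply the Descent Lemma to $p:q^{-1}(U_{E})\to V_{E}$. The obstruction to descent is the group $\mathrm{Hom}\bigl(T_{\mathcal{U}/X}|_{q^{-1}(U_{E})},\mathcal{H}om(\widetilde{K},\widetilde{Q})\bigr)$, and I would verify its vanishing by pushing forward along $q$. Since $\alpha_{k}$ is a long root, Proposition~\ref{tangent} yields $T_{\mathcal{U}/X}|_{\widetilde{L}}\cong\mathcal{O}_{\widetilde{L}}(-1)^{\oplus N}$ on every $q$-fiber, so the restriction of $T_{\mathcal{U}/X}^{\vee}\otimes\mathcal{H}om(\widetilde{K},\widetilde{Q})$ to $\widetilde{L}$ is a direct sum of copies of line bundles of degrees $1+a_{j}-a_{i}$ with $i\le t<j$. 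The numerical gap $a_{t}-a_{t+1}\ge 2$ then forces each such degree to be $\le -1$, whence $H^{0}$ vanishes on every $q$-fiber; cohomology and base change gives $q_{*}\bigl(T_{\mathcal{U}/X}^{\vee}\otimes\mathcal{H}om(\widetilde{K},\widetilde{Q})\bigr)=0$, and Leray yields the global vanishing. The Descent Lemma then produces a rank-$t$ subbundle $K\subset E|_{V_{E}}$ with $p^{*}K=\widetilde{K}$, and taking its saturation inside $E$ extends it to a normal subsheaf on all of $X$; its restriction to a line $L\subset V_{E}$ corresponding to $l\in U_{E}$ is $\bigoplus_{j=1}^{t}\mathcal{O}_{L}(a_{j})$ by construction.

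The main obstacle is the cohomology vanishing that feeds the Descent Lemma; this is exactly where both the long-root hypothesis (ensuring the clean splitting $T_{\mathcal{U}/X}|_{\widetilde{L}}\cong\mathcal{O}_{\widetilde{L}}(-1)^{\oplus N}$) and the numerical gap $a_{t}-a_{t+1}\ge 2$ (rather than $\ge 1$) are jointly needed, since one unit of positivity is absorbed by $T_{\mathcal{U}/X}^{\vee}$ before the gap can guarantee negativity. Any weakening of these inputs, for instance an exposed short root contributing $\mathcal{O}(-2)$ summands to $T_{\mathcal{U}/X}|_{\widetilde{L}}$, would demand a correspondingly larger gap, matching the separate statement of Theorem~\ref{shortroot}.
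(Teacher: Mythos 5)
Your proposal is correct and follows essentially the same route as the paper, which simply cites the proof of Theorem 2.1.4 in Okonek--Schneider--Spindler together with Proposition \ref{tangent} and the Descent Lemma: one builds $\widetilde{K}$ from the relative filtration over the open set of generic splitting type and descends it along $p$ after checking the vanishing of $\mathrm{Hom}\bigl(T_{\mathcal{U}/X},\mathcal{H}om(\widetilde{K},\widetilde{Q})\bigr)$ fiberwise, exactly as you do. Your degree count $1+a_j-a_i\le -1$ for $i\le t<j$ is the correct place where the long-root splitting $\mathcal{O}(-1)^{\oplus N}$ and the gap $a_t-a_{t+1}\ge 2$ enter.
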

\begin{proof}
The Proposition \ref{tangent} and the Lemma \ref{Descent} play important roles in our proof and our proof applies almost verbatim the proof of Theorem 2.1.4 given in \cite{O-S-S}.
\end{proof}
This theorem has far reaching consequences. We give first a series of immediate deductions.
\begin{cor}
Let $X$ be a generalized Grassmannian with long root $\alpha_k$. For a semistable $r$-bundle $E$ over $X$ of type $\underline{a}_E=(a_1,\ldots,a_r), a_1\geq\cdots\geq a_r$. we have
$$a_i-a_{i+1}\le 1~~for ~i=1,\ldots,r-1.$$
\end{cor}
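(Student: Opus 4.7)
The plan is to argue by contradiction, using the previous theorem as the main engine. Assume there exists some index $i$ with $a_i - a_{i+1} \geq 2$. Then the hypothesis of the preceding theorem is satisfied at $t = i$, so there is a normal subsheaf $K \subset E$ of rank $i$ which, on a generic line $L \subset X$ parametrized by the family $\mathcal{M}$, restricts as
\[
K|_L \cong \bigoplus_{j=1}^{i} \mathcal{O}_L(a_j).
\]
Since $K$ is a coherent subsheaf of $E$ with $0 < \mathrm{rk}(K) < \mathrm{rk}(E)$, $\delta_k$-(semi)stability (in the sense of Definition~\ref{i}, specialized here to the Picard rank one case) gives us the only inequality we need to contradict.

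Next, I would compute the two slopes directly from the generic splitting types. Because $X$ is a generalized Grassmannian (Picard number one), the quantity $c_1^{(\delta_k)}$ read off from the splitting on a generic line $L$ determines the slope up to the fixed positive rescaling $\deg \mathcal{O}_X(1) \cdot L$, so it suffices to compare
\[
\mu^{(\delta_k)}(K) = \frac{a_1 + \cdots + a_i}{i}, \qquad
\mu^{(\delta_k)}(E) = \frac{a_1 + \cdots + a_r}{r}.
\]
A short rearrangement yields
\[
\mu^{(\delta_k)}(K) - \mu^{(\delta_k)}(E) \;=\; \frac{(r-i)(a_1 + \cdots + a_i) - i(a_{i+1} + \cdots + a_r)}{i\, r}.
\]
Because the sequence $(a_j)$ is nonincreasing and $a_i \geq a_{i+1} + 2 > a_{i+1}$, each $a_j$ for $j \le i$ is strictly larger than each $a_k$ for $k \ge i+1$; hence the numerator is strictly positive and $\mu^{(\delta_k)}(K) > \mu^{(\delta_k)}(E)$, contradicting the $\delta_k$-semistability of $E$. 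The conclusion $a_i - a_{i+1} \leq 1$ for all $i = 1, \ldots, r-1$ then follows.

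The only real point that needs attention, rather than genuine difficulty, is matching the two notions of slope: the slope used in the semistability hypothesis is defined via the splitting type on a line not meeting the singular locus of the subsheaf, exactly the data produced by the previous theorem (the sheaf $K$ is a subbundle on the open set $V_E$, which meets generic lines of $\mathcal{M}$). Because the theorem guarantees $K$ is a normal coherent subsheaf of full generic rank $i$ with the prescribed splitting type on the lines in $U_E$, the quantity $c_1^{(\delta_k)}(K) = a_1 + \cdots + a_i$ is well-defined and the slope comparison above is exactly the one tested by Definition~\ref{i}. No further geometric input is needed; the corollary is a direct slope computation once the theorem supplies the destabilising subsheaf.
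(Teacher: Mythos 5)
Your proof is correct and follows the same route as the paper: assume $a_t-a_{t+1}\ge 2$ for some $t$, invoke the preceding theorem to produce the normal subsheaf $K$ with generic splitting type $(a_1,\ldots,a_t)$, and derive $\mu(K)>\mu(E)$, contradicting semistability. The paper's own proof is just a terser version of this; your explicit slope computation $\mu(K)-\mu(E)=\bigl((r-i)(a_1+\cdots+a_i)-i(a_{i+1}+\cdots+a_r)\bigr)/(ir)>0$ simply fills in the arithmetic the paper leaves implicit.
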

\begin{proof}
If for some $t<r$, we had $a_t-a_{t+1}\geq 2$, then we could find a normal subsheaf $K\subset E$ which is of the form
$$K|_L\cong\oplus_{j=1}^{t}\mathcal{O}_L(a_j)$$
over the general line $L\subset X$. Then we would have $\mu(E)<\mu(K)$ contrary to hypothesis.
\end{proof}
In particular, we get the generalization theorem of Grauert-M$\ddot{\text{u}}$lich-Barth :
\begin{cor}
Let $X$ be a generalized Grassmannian with long root $\alpha_k$. The splitting type of a semistable normalized 2-bundle $E$ over generalized Grassmannian $X$ is
\[
a_E=
\left\{
\begin{array}{ll}
(0,0)& if~c_1(E)=0\\
(0,-1)& if~c_1(E)=-1.
\end{array}
\right.\]
\end{cor}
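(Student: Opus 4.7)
The statement is an immediate specialization of the preceding corollary (which bounds consecutive jumps $a_i-a_{i+1}\le 1$ in the splitting type of a semistable bundle on a generalized Grassmannian with long root) to the rank two case, combined with a parity/integrality observation once the first Chern class is fixed. So my plan is to deduce the two tabulated splitting types by a one-line case analysis, after unpacking what ``normalized'' means.

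First I would fix notation: write $\underline{a}_E=(a_1,a_2)$ with $a_1\ge a_2$, so that the generic splitting of $E$ on a line $L\subset X$ (with $L$ belonging to the unsplit family $\mathcal{M}$) is $E|_L\cong\mathcal{O}_L(a_1)\oplus\mathcal{O}_L(a_2)$, and $c_1(E)\cdot L=a_1+a_2$. Normalization means I have twisted $E$ by a suitable line bundle so that $a_1+a_2\in\{0,-1\}$; this is the standard convention on $\mathcal{G}$ with $\mathrm{Pic}(\mathcal{G})=\mathbb{Z}$.

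Next I would apply the preceding corollary, which gives $a_1-a_2\le 1$ for any semistable bundle, so $a_1-a_2\in\{0,1\}$. The whole claim then reduces to checking two cases:
\begin{itemize}
\item If $c_1(E)=0$, then $a_1+a_2=0$; combined with $a_1-a_2\in\{0,1\}$ one gets either $(a_1,a_2)=(0,0)$ or $2a_1=1$, and the latter is excluded because $a_1\in\mathbb{Z}$. Hence $(a_1,a_2)=(0,0)$.
\item If $c_1(E)=-1$, then $a_1+a_2=-1$; similarly $(a_1,a_2)=(0,-1)$ or $2a_1=-1$, and only the former is integral. Hence $(a_1,a_2)=(0,-1)$.
\end{itemize}

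There is no real obstacle here: the content of the theorem has already been extracted in the previous corollary via the Descent Lemma and Proposition \ref{tangent} (the splitting type $T_{\mathcal{U}/X}|_{\widetilde L}=\mathcal{O}_{\widetilde L}(-1)^N$ for long roots). The only thing to be careful about is making explicit the integrality of the $a_i$'s, which is what forces the two parity-forbidden branches to drop out and leaves the unique splitting types listed in the table.
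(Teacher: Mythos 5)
Your proposal is correct and follows exactly the route the paper intends: the corollary is stated there as an immediate specialization of the preceding corollary's bound $a_i-a_{i+1}\le 1$ to rank two, and your case analysis using $a_1+a_2=c_1(E)\in\{0,-1\}$ together with integrality of the $a_i$ is precisely the (omitted) one-line argument. Nothing is missing.
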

\begin{cor}
Let $X$ be a generalized Grassmannian with long root $\alpha_k$. For a uniform $r$-bundle $E~(r=\varsigma(X)+1)$ over $X$ of type (see Section 3 for the notation $\varsigma(X)$)
$$\underline{a}_E=(a_1,\ldots,a_r),~ a_1\geq\cdots\geq a_r,$$
which does not split, we have
$$a_i-a_{i+1}\le 1~~for ~i=1,\ldots,r-1.$$
\end{cor}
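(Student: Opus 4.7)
The plan is to argue by contradiction. Suppose $a_t - a_{t+1} \geq 2$ for some $t < r$. By the preceding Grauert--M\"ulich--Barth theorem (using the bound ``$\geq 2$'' available for the long root $\alpha_k$), there is a normal subsheaf $K \subset E$ of rank $t$ which is a subbundle on an open $V_E \subset X$ and satisfies $K|_L \cong \bigoplus_{j=1}^{t}\mathcal{O}_L(a_j)$ on every line $L$ parametrized by $l \in U_E \subset \mathcal{M}$, so that correspondingly $(E/K)|_L \cong \bigoplus_{j=t+1}^{r}\mathcal{O}_L(a_j)$.

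The decisive intermediate step is to upgrade this to a short exact sequence
\[
0 \longrightarrow K \longrightarrow E \longrightarrow Q \longrightarrow 0
\]
of \emph{vector bundles} on all of $X$. I would carry this out by mimicking the direct--image construction of Proposition \ref{notsame} on the universal family $p:\mathcal{U}\to X$, $q:\mathcal{U}\to\mathcal{M}$. After twisting $E$ by a suitable line bundle so that $a_t \geq 1$ and $a_{t+1} \leq -1$ (which is possible because $a_t - a_{t+1} \geq 2$), the direct image $q_{*}(p^{*}E^{\vee})$ is locally free of rank $r-t$ on $\mathcal{M}$, yielding a subbundle $\widetilde{K}\subset p^{*}E$ of rank $t$ on all of $\mathcal{U}$ whose fibres along $q$ recover the top--$t$ piece of $E|_L$. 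Descent Lemma \ref{Descent} then descends $\widetilde{K}$ to a subbundle $K\subset E$ on all of $X$, provided $\mathrm{Hom}(T_{\mathcal{U}/X},\mathcal{H}om(\widetilde{K},\widetilde{Q}))=0$; this vanishing follows fibrewise from Proposition \ref{tangent}, which yields $T_{\mathcal{U}/X}|_{q^{-1}(l)}=\mathcal{O}(-1)^{N}$ (the long--root hypothesis on $\alpha_k$ is exactly what is used here), combined with the gap $a_t-a_{t+1}\geq 2$ that makes the relevant $H^0$ on each line negative.

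Once $K$ and $Q$ are genuine vector bundles on $X$, their ranks $t$ and $r-t$ are both bounded by $r-1 = \varsigma(X)$, and they are uniform on the generalized Grassmannian $X$, so Theorem \ref{GGS} forces
\[
K \cong \bigoplus_{j=1}^{t}\mathcal{O}_X(a_j), \qquad Q \cong \bigoplus_{i=t+1}^{r}\mathcal{O}_X(a_i).
\]
The extension class then lies in
\[
\mathrm{Ext}^{1}(Q,K) \;=\; \bigoplus_{\substack{1\leq j\leq t\\ t<i\leq r}} H^{1}\bigl(X,\mathcal{O}_X(a_j-a_i)\bigr),
\]
and since $a_j - a_i \geq a_t - a_{t+1}\geq 2>0$, each summand is the first cohomology of an ample line bundle on the Fano homogeneous variety $X$, which vanishes by Kodaira vanishing (or, more sharply, by Bott's theorem on $G/P_k$). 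Consequently $E\cong K\oplus Q$ splits as a direct sum of line bundles, contradicting the hypothesis that $E$ does not split. The main obstacle, and the only delicate point, is the middle paragraph: producing $K$ as a \emph{global} subbundle of $E$ from the subsheaf delivered by Grauert--M\"ulich--Barth, since that is what permits the splitting result Theorem \ref{GGS} to be applied to both $K$ and $Q$; everything else is a formal consequence.
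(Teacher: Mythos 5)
Your proposal is correct and follows essentially the same route as the paper: assume a gap $a_t-a_{t+1}\geq 2$, produce a global subbundle $K$ with quotient $Q$, apply Theorem \ref{GGS} to both (each has rank at most $r-1=\varsigma(X)$), and conclude $E\cong K\oplus Q$ by the vanishing of $\mathrm{Ext}^1(Q,K)$, contradicting non-splitting. The only difference is that the paper obtains the global subbundle immediately by noting that uniformity forces $V_E=X$ in the Grauert--M\"ulich--Barth theorem, whereas you rebuild it via the direct-image construction and the Descent Lemma; both are valid, and your version also makes explicit the $H^1$-vanishing that the paper leaves implicit.
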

\begin{proof}
If for some $t<r$, we had $a_t-a_{t+1}\geq 2$, then we could find a uniform subbundle $K\subset E$ of type which is of $\underline{a}_K=(a_1,\ldots,a_t)$ (because $V_E=X$). Then quotient bundle $Q=E/K$ would be uniform of type $(a_{s+1},\ldots,a_r)$. According to Theorem \ref{GGS} the bundle $K$ and $Q$ must be direct sums of line bundles. The exact sequence
$$0\rightarrow K\rightarrow E\rightarrow Q\rightarrow 0$$
would therefore split and hence $E$ would be a direct sum of line bundles contrary to hypothesis.
\end{proof}

When $\alpha_k$ is a short root of $\mathcal{D}$, $\mathcal{M}$ is not the variety of lines on $X$, but only a closed $G$-orbit. Therefore we're just going to think about the splitting type and semistability of vector bundles with respect to $\mathcal{M}$.
\begin{thm}
 Let $X$ be a generalized Grassmannian by marking on $\mathcal{D}$ the node $k$ and $\alpha_k$ be a short root of $\mathcal{D}$. Let $E$ be a holomorphic $r$-bundle over $X$ of type $\underline{a}_E=(a_1,\ldots,a_r), a_1\geq\cdots\geq a_r$ with respect to $\mathcal{M}$. If for some $t<r$,
 \[
 a_t-a_{t+1}\geq
 \left\{
 \begin{array}{ll}
 3, & if ~\mathcal{D}\neq G_2\\
 4, & if ~\mathcal{D}= G_2,
 \end{array}
 \right.\]
	then there is a normal subsheaf $K\subset E$ of rank $t$ with the following properties: over the open set $V_E=p(q^{-1}(U_E))\subset X$,  where $U_E$ is an open set in $\mathcal{M}$, the sheaf $K$ is a subbundle of $E$, which on the line $L\subset X$ given by $l\in U_E$ has the form
	$$K|_L\cong\oplus_{j=1}^{t}\mathcal{O}_L(a_j).$$
\end{thm}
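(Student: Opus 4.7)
The plan is to mimic the proof of the long root version of the theorem (which in turn follows the template of Theorem 2.1.4 in \cite{O-S-S}): construct a candidate subbundle $\widetilde{K} \subset p^{*}E$ on a suitable open set of $\mathcal{U}$, and then descend it via $p:\mathcal{U}\to X$ using the Descent Lemma (Lemma \ref{Descent}). The only new input is the splitting type of $T_{\mathcal{U}/X}|_{\widetilde{L}}$ in the short root case, which we have already computed.

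First I would use semicontinuity to pick an open set $U_E\subseteq \mathcal{M}$ over which $(p^{*}E)|_{\widetilde{L}}\cong\bigoplus_{s=1}^{r}\mathcal{O}_{\widetilde{L}}(a_s)$ for every $\widetilde{L}=q^{-1}(l)$ with $l\in U_E$. On $q^{-1}(U_E)$, the locally free sheaf $q_{*}(p^{*}E\otimes\mathcal{L})$, where $\mathcal{L}$ is twisted along $q$-fibers by $-a_{t+1}$, provides, after pulling back by $q$ and composing with the evaluation map, a subbundle $\widetilde{K}\subset p^{*}E$ of rank $t$ whose restriction to each $\widetilde{L}$ is $\bigoplus_{s=1}^{t}\mathcal{O}_{\widetilde{L}}(a_s)$. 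This is the usual relative Harder--Narasimhan step for a smooth $\mathbb{P}^{1}$-fibration $q$ and it works identically in the short root case. Denote $\widetilde{Q}=p^{*}E/\widetilde{K}$.

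To descend $\widetilde{K}$ through $p$, I would verify the hypothesis of the Descent Lemma by showing $\mathrm{Hom}(T_{\mathcal{U}/X},\mathcal{H}om(\widetilde{K},\widetilde{Q}))=0$ on $q^{-1}(U_E)$, and this is checked fiberwise along $q$ (since $\mathcal{U}$ is swept out by $q$-fibers and a global section that vanishes on every $\widetilde{L}$ must be zero). On a fiber $\widetilde{L}\cong\mathbb{P}^{1}$, the bundle $T_{\mathcal{U}/X}^{\vee}|_{\widetilde{L}}$ is a sum of line bundles of degree at most $2$ when $\mathcal{D}\neq G_2$, and at most $3$ when $\mathcal{D}=G_2$, by Lemmas \ref{bn}, \ref{c1}, \ref{ck} and Proposition \ref{fk}. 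On the other hand, $\mathcal{H}om(\widetilde{K},\widetilde{Q})|_{\widetilde{L}}$ is a sum of summands $\mathcal{O}_{\widetilde{L}}(a_j-a_i)$ with $i\le t<j$, whose maximal degree is $a_{t+1}-a_t\le -3$ (respectively $\le -4$ in the $G_2$ case). Thus every summand of $T_{\mathcal{U}/X}^{\vee}\otimes\mathcal{H}om(\widetilde{K},\widetilde{Q})|_{\widetilde{L}}$ has degree at most $2+(-3)=-1$ (respectively $3+(-4)=-1$) and therefore no nonzero global section on $\widetilde{L}$. This delivers the vanishing, so the Descent Lemma yields an algebraic subbundle $K\subset E$ of rank $t$ over $V_E=p(q^{-1}(U_E))$, extended to a normal subsheaf of $E$ on $X$; its restriction to a general line $L$ has the stated form by construction.

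The potentially tricky step is not the formal argument, which parallels the long root case verbatim, but rather the reason for the worse numerical threshold. The increase from $a_t-a_{t+1}\ge 2$ (long root case) to $a_t-a_{t+1}\ge 3$ (resp.\ $\ge 4$ for $G_2$) is forced precisely by the appearance of $\mathcal{O}(-2)$ (resp.\ $\mathcal{O}(-3)$) summands in $T_{\mathcal{U}/X}|_{\widetilde{L}}$ when $\alpha_k$ is a (genuine, in the Grassmannian setting, exposed) short root, and the numerical estimate above shows these bounds are the smallest ones that still guarantee the Descent Lemma hypothesis. Hence the whole substance of the proof lies in the explicit splitting type computations already carried out in Section~4, and no further obstacle is expected.
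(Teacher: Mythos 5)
Your proposal is correct and takes essentially the same route as the paper: the paper's own proof likewise reduces everything to the Descent Lemma (Lemma \ref{Descent}) together with the splitting types of $T_{\mathcal{U}/X}|_{\widetilde{L}}$ from Lemmas \ref{bn}, \ref{c1}, \ref{ck} and Proposition \ref{fk}, and then transplants the Grauert--M\"ulich--Barth argument of \cite{O-S-S} verbatim, with the thresholds $3$ (resp.\ $4$ for $G_2$) forced exactly by the $\mathcal{O}(-2)$ (resp.\ $\mathcal{O}(-3)$) summands as you explain. (One minor slip: to cut out the rank-$t$ subbundle via the relative evaluation map you should twist along the $q$-fibers by $-a_t$ rather than $-a_{t+1}$, since otherwise the image of the evaluation map can have rank larger than $t$ when $a_s=a_{t+1}$ for some $s>t+1$.)
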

\begin{proof}
 The Lemma \ref{Descent}, \ref{bn}, \ref{c1}, \ref{ck} and Proposition \ref{fk} play important roles in our proof and our proof applies almost verbatim the proof of Theorem 2.4 given in \cite{O-S-S}.
\end{proof}
Similarly, we have the following corollary.
\begin{cor}
Let $X$ be a generalized Grassmannian with short root $\alpha_k$.	For a semistable $r$-bundle $E$ over $X$ of type $\underline{a}_E=(a_1,\ldots,a_r), a_1\geq\cdots\geq a_r$ with respect to $\mathcal{M}$. we have
	$$a_i-a_{i+1}\le 3~~for ~i=1,\ldots,r-1.$$
	In particular, if $\mathcal{D}\neq G_2$, $a_i-a_{i+1}\le 2~~for ~i=1,\ldots,r-1.$
\end{cor}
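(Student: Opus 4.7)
The plan is to derive this as a routine contradiction from the preceding theorem. Suppose $E$ is semistable with respect to $\mathcal{M}$ with splitting type $\underline{a}_E=(a_1,\ldots,a_r)$ on a general line, but that for some index $t<r$ we have $a_t-a_{t+1}\ge 4$ when $\mathcal{D}=G_2$, or $a_t-a_{t+1}\ge 3$ when $\mathcal{D}\neq G_2$. These are exactly the hypotheses of the preceding theorem, so it produces a normal subsheaf $K\subset E$ of rank $t$ which on a general line $L$ parametrised by $l\in U_E\subset\mathcal{M}$ splits as $\bigoplus_{j=1}^{t}\mathcal{O}_L(a_j)$.

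Next I would compute slopes. Since the singular locus $S(K)$ has codimension at least two, a general line $L\in\mathcal{M}$ avoids $S(K)$, so in the sense of Definition \ref{i} we have
\[
\mu^{(\mathcal{M})}(K)=\frac{a_1+\cdots+a_t}{t},\qquad \mu^{(\mathcal{M})}(E)=\frac{a_1+\cdots+a_r}{r}.
\]
The strict drop $a_t>a_{t+1}$, combined with $a_1\ge\cdots\ge a_t$ and $a_{t+1}\ge\cdots\ge a_r$, forces $\mu^{(\mathcal{M})}(K)>\mu^{(\mathcal{M})}(E)$ by a one-line averaging inequality, contradicting the semistability of $E$. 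Hence no such $t$ exists, which gives the claimed bound $a_i-a_{i+1}\le 3$. The \emph{in particular} clause for $\mathcal{D}\ne G_2$ follows by applying exactly the same argument with the smaller threshold $3$ coming from the first branch of the preceding theorem, producing the sharper bound $a_i-a_{i+1}\le 2$.

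There is no serious obstacle here: the heavy lifting, namely the production of the destabilising subsheaf $K$ through the Descent Lemma combined with the splitting type computations for $T_{\mathcal{U}/X}|_{\widetilde L}$ in Lemmas \ref{bn}, \ref{c1}, \ref{ck} and Proposition \ref{fk}, is already packaged inside the preceding theorem. The only point worth double-checking is the strict inequality in the slope comparison, which is automatic because the drop at position $t$ is strictly positive.
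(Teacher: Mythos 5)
Your proposal is correct and follows exactly the paper's intended argument: the corollary is the contrapositive of the preceding theorem, since a gap of size $3$ (resp.\ $4$ for $G_2$) would produce the normal subsheaf $K$ with $\mu(K)>\mu(E)$ by the averaging inequality you state, contradicting semistability. The paper proves the long-root analogue in precisely this way and derives the short-root case ``similarly,'' so nothing further is needed.
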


From now, Let \[X=G/P\simeq G_1/P_{I_1}\times G_2/P_{I_2}\times \cdots \times G_m/P_{I_m}, \] where $G_i$ is a simple Lie group with \emph{Dynkin diagram} $\mathcal{D}_i$ whose set of nodes is $D_i$ and $P_{I_i}$ is a parabolic subgroup of $G_i$ corresponding to $I_i\subset D_i$. We set $F(I_i):=G_i/P_{I_i}$ by marking on the Dynkin diagram $\mathcal{D}_i$ of $G_i$ the nodes corresponding to $I_i$. Let $\delta_i$ be a node in $\mathcal{D}_i$ and $N(\delta_i)$ be the set of nodes in $\mathcal{D}_i$ that are connected to $\delta_i$.

If $\delta_i\in I_i$, we call $$\mathcal{M}_i^{\delta_i^c}:=G_i/P_i^{\delta_i^c}\times \widehat{G_i/P_{I_i}}~ (1\le i\le m),$$ the \emph{$i$-th special family} of lines of class $\check{\delta}_i$, where $P_i^{\delta_i^c}:=P_{I_i\backslash \delta_i\cup N(\delta_i)}$ and $\widehat{G_i/P_{I_i}}$ is $G_1/P_{I_1}\times G_2/P_{I_2}\times \cdots \times G_m/P_{I_m}$ by deleting $i$-th term $G_i/P_{I_i}$. Denote by
$$\mathcal{U}_i^{\delta_i^c}:=G_i/P_{I_i\cup N(\delta_i)}\times \widehat{G_i/P_{I_i}}$$ the \emph{$i$-th universal family} of class $\check{\delta}_i$, which has a natural $\mathbb{P}^1$-bundle structure over $\mathcal{M}_i^{\delta_i^c}$.

We separate our discussion into two cases:

Case I: $N(\delta_i)\subseteq I_i$, then $\mathcal{U}_i^{\delta_i^c}=X$ and we have the natural projection $X\rightarrow \mathcal{M}_i^{\delta_i^c}$;

Case II: $N(\delta_i)\nsubseteq I_i$, then we have the standard diagram:
\begin{align}
	\xymatrix{
		\mathcal{U}_i^{\delta_i^c}\ar[d]^{q_1}   \ar[r]^-{q_2} & \mathcal{M}_i^{\delta_i^c} \\
		X.
	}
\end{align}


Notice that for $x\in X$, $\mathcal{M}^{\delta_i^c}_x={q_2}({q_1}^{-1}(x))$ coincides with $H/P_{N(\delta_i)}$ where $D(H)$ is the components of $(\overline{D_i\backslash I_i})\backslash \delta_i$ containing an element of $N(\delta_i)$ by Theorem \ref{lines2}.

Let $\mathcal{G}^{\delta_i}$ be a generalized Grassmannian whose Dynkin diagram $\mathcal{D}^{\delta_i}$ is the maximal sub-diagram of $(\mathcal{D}_i,I_i)$ with the only marked point $\delta_i$.
Let's consider the standard diagram associated to $\mathcal{G}^{\delta_i}$
\begin{align}
\xymatrix{
	\mathcal{U}\ar[d]^{p}   \ar[r]^-{q} & \mathcal{M}\\
	\mathcal{G}^{\delta_i}.
}
\end{align}	

It's not hard to see that $\mathcal{M}^{\delta_i^c}_x$ is isomorphic to $\mathcal{M}_y=q(p^{-1}(y))$ for every $y\in \mathcal{G}^{\delta_i}$ and we have the following Lemma.
\begin{lemma}\label{tangent3}
		Let $\widetilde{L}=q_2^{-1}(l)\subset \mathcal{U}_i^{\delta_i^c}$ for $l\in \mathcal{M}_i^{\delta_i^c}$. For the relative tangent bundle $T_{\mathcal{U}_i^{\delta_i^c}/X}$, we have
	$$T_{\mathcal{U}_i^{\delta_i^c}/X}|_{\widetilde{L}}=T_{\mathcal{U}/\mathcal{G}^{\delta_i}}|_{\widetilde{L}}.$$
\end{lemma}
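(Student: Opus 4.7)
The plan is to first exploit the product structure of $X$ to reduce the claim to the single simple factor indexed by $i$, and then match both sides as $G_i$-homogeneous bundles via a root-theoretic comparison.

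For the reduction, I would use the product decompositions
\[
X=G_i/P_{I_i}\times \widehat{G_i/P_{I_i}},\quad \mathcal{U}_i^{\delta_i^c}=G_i/P_{I_i\cup N(\delta_i)}\times \widehat{G_i/P_{I_i}},\quad \mathcal{M}_i^{\delta_i^c}=G_i/P_i^{\delta_i^c}\times \widehat{G_i/P_{I_i}}.
\]
Under these, both $q_1$ and $q_2$ are products of their natural first-factor counterparts with the identity on $\widehat{G_i/P_{I_i}}$, so $T_{\mathcal{U}_i^{\delta_i^c}/X}$ is the pullback, via the first projection, of the relative tangent bundle $T_\psi$ of $\psi\colon G_i/P_{I_i\cup N(\delta_i)}\to G_i/P_{I_i}$. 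Writing $l=(l_1,\widehat{x})$, the fiber $\widetilde{L}=q_2^{-1}(l)$ sits in the slice $G_i/P_{I_i\cup N(\delta_i)}\times\{\widehat{x}\}$ and is canonically identified with the $\mathbb{P}^1$-fiber $\widetilde{L}_1$ of $G_i/P_{I_i\cup N(\delta_i)}\to G_i/P_i^{\delta_i^c}$ over $l_1$. Thus it suffices to prove $T_\psi|_{\widetilde{L}_1}\cong T_{\mathcal{U}/\mathcal{G}^{\delta_i}}|_{\widetilde{L}}$.

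For this I would compute each side as a homogeneous bundle. $T_\psi$ is $G_i$-homogeneous, induced from the $P_{I_i\cup N(\delta_i)}$-representation on $\mathfrak{p}_{I_i}/\mathfrak{p}_{I_i\cup N(\delta_i)}$, and $\widetilde{L}_1$ is the orbit of the $SL_2$-subgroup associated to the simple root $\delta_i$. Decomposing this representation into root spaces and pairing each weight with $\check{\delta}_i$ --- exactly in the style of the explicit calculations carried out earlier in Section~4 --- yields
\[
T_\psi|_{\widetilde{L}_1}\;\cong\;\bigoplus_{\alpha\in S}\mathcal{O}_{\mathbb{P}^1}\bigl(-\langle\alpha,\check{\delta}_i\rangle\bigr),
\]
where $S$ is the set of positive roots of $\mathfrak{g}_i$ whose coefficient vanishes on every node of $I_i$ but is non-zero on some node of $N(\delta_i)\setminus I_i$. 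Running the same analysis on the $\mathcal{G}^{\delta_i}$-side, with $H$ the simple group attached to $\mathcal{D}^{\delta_i}$, gives the analogous decomposition indexed by positive roots $\alpha'$ of $\mathrm{Lie}(H)$ lying in $\mathfrak{p}_{\delta_i}\setminus \mathfrak{p}_{\delta_i\cup N_{\mathcal{D}^{\delta_i}}(\delta_i)}$. Under the inclusion of root systems induced by the inclusion of Dynkin diagrams $\mathcal{D}^{\delta_i}\hookrightarrow \mathcal{D}_i$, these two indexing sets agree and the pairings with $\check{\delta}_i$ are preserved, so the two decompositions match summand by summand.

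The main obstacle is the last identification: one must verify that passing from $\mathcal{D}_i$ with marking $I_i$ to the sub-diagram $\mathcal{D}^{\delta_i}$ (with $\delta_i$ as the only mark) neither gains nor loses any summand. The point is that every positive root $\alpha\in S$ has zero coefficient on each node of $I_i\setminus\{\delta_i\}$ and non-zero coefficient on some node of $N(\delta_i)\setminus I_i$; hence $\alpha$ is supported entirely on the connected component of $\delta_i$ in $\mathcal{D}_i\setminus(I_i\setminus\{\delta_i\})$, which by construction is $\mathcal{D}^{\delta_i}$, and so it already lives in $\mathrm{Lie}(H)$. Conversely, any root contributing on the $\mathcal{G}^{\delta_i}$-side lies in $\mathfrak{g}_i$ and satisfies the vanishing condition along $I_i$ automatically, since its support is disjoint from $I_i\setminus\{\delta_i\}$. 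The extra parabolic conditions coming from the other marked nodes of $I_i$ therefore impose no further restriction on the fibrewise calculation, and the two line-bundle splittings coincide, proving the lemma.
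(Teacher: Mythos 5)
Your argument is correct in substance, and it is worth noting that the paper itself gives essentially no proof of this lemma: it is asserted immediately after the observation that $\mathcal{M}^{\delta_i^c}_x$ is isomorphic to $\mathcal{M}_y$ for $y\in\mathcal{G}^{\delta_i}$, with the actual relative-tangent-bundle computations done case by case elsewhere (universal-bundle sequences for classical types in Section 4.1, the tag/weights method for exceptional types in Section 4.2). What you do differently is to give a uniform, type-independent proof: reduce to the simple factor via the product structure, realize $T_{\mathcal{U}_i^{\delta_i^c}/X}$ as the homogeneous bundle induced by $\mathfrak{p}_{I_i}/\mathfrak{p}_{I_i\cup N(\delta_i)}$, and then check that the set $S$ of contributing positive roots is the same on both sides. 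The key combinatorial point --- that any $\alpha\in S$ has connected support avoiding $I_i$ and meeting $N(\delta_i)\setminus I_i$, hence is supported entirely in the connected component $\mathcal{D}^{\delta_i}$ of $\delta_i$ in $\mathcal{D}_i\setminus(I_i\setminus\{\delta_i\})$, and conversely --- is exactly right, and the Cartan pairings with $\check{\delta}_i$ are unchanged under the sub-diagram inclusion. This buys a genuine proof where the paper has only a heuristic, at the cost of more root-system bookkeeping.

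Two small points to tighten. First, with the paper's convention that $\mathfrak{b}$ contains the negative root spaces, the summand attached to $\alpha$ has degree $+\langle\alpha,\check{\delta}_i\rangle$ (which is $\le 0$ for $\alpha\in S$), not $-\langle\alpha,\check{\delta}_i\rangle$; compare the $E_7/P_7$ example, where all weights have $m_6=1$ and the restriction is $\mathcal{O}(-1)^{\oplus 16}$. This is a convention issue and does not affect the comparison. Second, deducing the isomorphism from ``same list of weights'' implicitly assumes the restriction to $\widetilde{L}$ splits with exactly those degrees; a homogeneous bundle on $\mathbb{P}^1$ a priori only carries a weight filtration. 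You can sidestep this entirely by observing that both restrictions are induced from literally the same module $\bigoplus_{\alpha\in S}\mathfrak{g}_\alpha$ over the same stabilizer data (the torus and the $\mathfrak{sl}_2$-Borel of $\delta_i$, whose action is computed inside $\mathrm{Lie}(H)\subset\mathfrak{g}_i$ identically on both sides), so the two bundles on $\widetilde{L}$ are isomorphic as homogeneous bundles without ever computing the splitting type.
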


If $\delta_i$ is an exposed short root of $(\mathcal{D}_i,I_i)$, then $\delta_i$ is a short root of $\mathcal{D}^{\delta_i}$ and vice versa. Combining this fact and the above Lemma, we get the following Proposition:
\begin{prop}\label{tangent2}
Let $\widetilde{L}=q_2^{-1}(l)\subset \mathcal{U}_i^{\delta_i^c}$ for $l\in \mathcal{M}_i^{\delta_i^c}$. If $\delta_i$ is not an exposed short root, then for the relative tangent bundle $T_{\mathcal{U}_i^{\delta_i^c}/X}$, we have
$$T_{\mathcal{U}_i^{\delta_i^c}/X}|_{\widetilde{L}}=\mathcal{O}_{\widetilde{L}}(-1)^{N},~N=dim ~\mathcal{U}_i^{\delta_i^c}-dim X.$$
\end{prop}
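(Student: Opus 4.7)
The plan is to reduce the statement directly to the generalized Grassmannian case already handled in Proposition \ref{tangent}. The reduction is built from two inputs that are already available in the excerpt: Lemma \ref{tangent3}, which identifies the restricted relative tangent bundle with the analogous one on an auxiliary generalized Grassmannian, and the parenthetical observation immediately preceding the statement, which relates ``exposed short'' on $(\mathcal{D}_i, I_i)$ to ``short'' on the sub-diagram $\mathcal{D}^{\delta_i}$.

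First I would invoke Lemma \ref{tangent3} to obtain the isomorphism
\[
T_{\mathcal{U}_i^{\delta_i^c}/X}\big|_{\widetilde{L}} \;\cong\; T_{\mathcal{U}/\mathcal{G}^{\delta_i}}\big|_{\widetilde{L}},
\]
where $\mathcal{G}^{\delta_i}$ is the generalized Grassmannian whose marked Dynkin diagram is the maximal sub-diagram of $(\mathcal{D}_i, I_i)$ with the only black node $\delta_i$, and $\mathcal{U} \to \mathcal{G}^{\delta_i}$ is the standard universal family over $\mathcal{M}$. This completely removes the product factors $\widehat{G_i/P_{I_i}}$ and the extra marked nodes of $I_i$ from the picture, reducing the problem to Picard number one.

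Next I would apply the equivalence noted just before the proposition: $\delta_i$ is an exposed short root of $(\mathcal{D}_i, I_i)$ if and only if $\delta_i$ is a short root of $\mathcal{D}^{\delta_i}$. Under the hypothesis, $\alpha_{\delta_i}$ is therefore a long root of $\mathcal{D}^{\delta_i}$, so Proposition \ref{tangent} applies to $\mathcal{G}^{\delta_i}$ and gives
\[
T_{\mathcal{U}/\mathcal{G}^{\delta_i}}\big|_{\widetilde{L}} \;=\; \mathcal{O}_{\widetilde{L}}(-1)^{\oplus M}, \qquad M = \dim \mathcal{U} - \dim \mathcal{G}^{\delta_i}.
\]
Combining with the previous isomorphism finishes the splitting type.

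The only thing left to check is the numerical claim $M = N := \dim \mathcal{U}_i^{\delta_i^c} - \dim X$. Since $\mathcal{U}_i^{\delta_i^c}$ and $\mathcal{U}$ are both $\mathbb{P}^1$-bundles over $\mathcal{M}_i^{\delta_i^c}$ and $\mathcal{M}$ respectively, and since the $q_1$-fiber $\mathcal{M}_x^{\delta_i^c}$ over any $x \in X$ is isomorphic to the $p$-fiber $\mathcal{M}_y$ over any $y \in \mathcal{G}^{\delta_i}$ (both equal the homogeneous space $H/P_{N(\delta_i)}$ of Theorem \ref{lines2}), the relative dimensions of $q_1$ and $p$ agree; this forces $M = N$. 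I do not expect any serious obstacle here: the substantive geometric work has all been done in Proposition \ref{tangent} and Lemma \ref{tangent3}, and the present proposition is essentially a packaging of those results via the exposed-short/short-root dictionary.
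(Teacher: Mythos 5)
Your proposal is correct and follows essentially the same route as the paper: the paper likewise combines Lemma \ref{tangent3} with the observation that $\delta_i$ is an exposed short root of $(\mathcal{D}_i,I_i)$ if and only if it is a short root of $\mathcal{D}^{\delta_i}$, and then cites Proposition \ref{tangent}. Your explicit check that the relative dimensions agree (via the isomorphism of the fibers $\mathcal{M}_x^{\delta_i^c}\cong\mathcal{M}_y$) is a small detail the paper leaves implicit, but it is the same argument.
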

\begin{proof}
It's obviously from Proposition \ref{tangent}.
\end{proof}

Similarly, by combining with Lemma \ref{bn}, \ref{c1}, \ref{ck}, \ref{tangent3} and Proposition \ref{fk}, we can immediately draw the following proposition.

\begin{prop}\label{tangent4}
	Let $\widetilde{L}=q_2^{-1}(l)\subset \mathcal{U}_i^{\delta_i^c}$ for $l\in \mathcal{M}_i^{\delta_i^c}$. If $\delta_i$ is an exposed short root, then for the relative tangent bundle $T_{\mathcal{U}_i^{\delta_i^c}/X}$,  the splitting type of $T_{\mathcal{U}_i^{\delta_i^c}/X}$ takes several forms: $(-1,\ldots,-1,-2)$, $(-2,\ldots,-2)$, $(-2,-2,-1)$, $(-2,-2,-2,-1,-1,-1)$ or $(-3)$.
\end{prop}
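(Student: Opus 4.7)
The plan is to reduce this statement to the already-established short-root computations for generalized Grassmannians of single-node type, and then enumerate the possible Dynkin-diagram shapes that can appear.

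First, I would invoke Lemma \ref{tangent3}, which identifies
\[
T_{\mathcal{U}_i^{\delta_i^c}/X}\big|_{\widetilde{L}} \;\cong\; T_{\mathcal{U}/\mathcal{G}^{\delta_i}}\big|_{\widetilde{L}},
\]
where $\mathcal{G}^{\delta_i}$ is the generalized Grassmannian whose marked Dynkin diagram $(\mathcal{D}^{\delta_i},\delta_i)$ is the maximal sub-diagram of $(\mathcal{D}_i,I_i)$ with only $\delta_i$ marked. This reduction is exactly the one already used in the proof of Proposition \ref{tangent2}; the point is that the $q_2$-fibers $\widetilde{L}$ of the universal family for $X$ and the $q$-fibers of the universal family for $\mathcal{G}^{\delta_i}$ are canonically identified.

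Second, I would use the observation recorded just before Proposition \ref{tangent2}: $\delta_i$ is an exposed short root of $(\mathcal{D}_i,I_i)$ if and only if $\delta_i$ is a short root of $\mathcal{D}^{\delta_i}$. Thus the hypothesis places us precisely in the setting where $\mathcal{G}^{\delta_i}=G'/P_{\delta_i}$ with $\delta_i$ a short node of the (possibly smaller) Dynkin diagram $\mathcal{D}^{\delta_i}$. To conclude, it then suffices to enumerate all connected Dynkin diagrams admitting a short node and to read off the relevant relative tangent bundle from the earlier lemmas.

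Third, I would run through the enumeration. A short node can only occur in types $B$, $C$, $F_4$ or $G_2$, so $(\mathcal{D}^{\delta_i},\delta_i)$ falls into one of the following cases, each of which has already been handled:
\begin{itemize}
\item $B_n$ with $\delta_i=n$: Lemma \ref{bn} gives $T_{\mathcal{U}/\mathcal{G}^{\delta_i}}|_{\widetilde{L}}\cong\mathcal{O}_{\widetilde{L}}(-2)^{\oplus n-1}$, a splitting of type $(-2,\ldots,-2)$.
\item $C_n$ with $\delta_i=1$: Lemma \ref{c1} gives $(-1,\ldots,-1,-2)$.
\item $C_n$ with $2\le \delta_i\le n-1$: Lemma \ref{ck} again gives $(-1,\ldots,-1,-2)$.
\item $F_4$ with $\delta_i\in\{3,4\}$ and $G_2$ with $\delta_i=1$: Proposition \ref{fk} yields the three exceptional splittings $(-2,-2,-1)$, $(-2,-2,-2,-1,-1,-1)$, and $(-3)$, respectively.
\end{itemize}
No other connected marked diagram has a short root, so the list exhausts all exposed short root cases and proves the proposition.

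The main obstacle is not a computation but bookkeeping: one must verify that the correspondence \emph{``exposed short root in $(\mathcal{D}_i,I_i)$''} $\longleftrightarrow$ \emph{``short root in $\mathcal{D}^{\delta_i}$''} is precisely the equivalence needed, and that the enumeration above is complete, in particular that no connected Dynkin diagram admits a short node distinct from the ones listed. Once the reduction via Lemma \ref{tangent3} is in place, the rest is a direct appeal to the case-by-case lemmas already proved.
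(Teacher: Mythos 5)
Your proposal is correct and follows essentially the same route as the paper: the paper derives Proposition \ref{tangent4} precisely by combining Lemma \ref{tangent3} (the reduction to $T_{\mathcal{U}/\mathcal{G}^{\delta_i}}|_{\widetilde{L}}$ and the equivalence between exposed short roots of $(\mathcal{D}_i,I_i)$ and short roots of $\mathcal{D}^{\delta_i}$) with Lemmas \ref{bn}, \ref{c1}, \ref{ck} and Proposition \ref{fk}, exactly as you enumerate. Your write-up merely makes explicit the completeness of the case list ($B_n$ at node $n$, $C_n$ at nodes $1,\dots,n-1$, $F_4$ at nodes $3,4$, $G_2$ at node $1$), which the paper leaves implicit.
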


According to Theorem \ref{lines}, \ref{lines2}, if $\delta_i$ is not an exposed short root, then $\mathcal{M}_i^{\delta_i^c}$ is the space of of lines of class $\check{\delta}_i$ and $\mathcal{M}_x^{\delta_i^c}$ is the space of lines of class $\check{\delta}_i$ through $x$.

Using the above propositions, we have the similar results for rational homogeneous spaces.

\begin{thm}\label{longroot2}
Fix $\delta_i\in I_i$ 
and assume that $\delta_i$ is not an exposed short root. Let $E$ be a holomorphic $r$-bundle over $X$ of type $\underline{a}_E^{(\delta_i)}=(a_1^{(\delta_i)},\ldots,a_r^{(\delta_i)}), ~ a_1^{(\delta_i)}\geq\cdots\geq a_r^{(\delta_i)}$ with respect to $\mathcal{M}_i^{\delta_i^c}$. If for some $t<r$,
\[
a_t^{(\delta_i)}-a_{t+1}^{(\delta_i)}\geq
\left\{
\begin{array}{ll}
1, & and~N(\delta_i)~ \text{fits Case I}\\
2, & and ~N(\delta_i)~ \text{fits Case II},
\end{array}
\right.\]
then there is a normal subsheaf $K\subset E$ of rank $t$ with the following properties: over the open set $V_E=q_1(q_2^{-1}(U_E^{(\delta_i)}))\subset X$, where $U_E^{(\delta_i)}$ is an open set in $\mathcal{M}^{\delta_i^c}$, the sheaf $K$ is a subbundle of $E$, which on the line $L\subset X$ given by $l\in U_E^{(\delta_i)}$ has the form
$$K|_L\cong\oplus_{s=1}^{t}\mathcal{O}_L(a_s^{(\delta_i)}).$$
\end{thm}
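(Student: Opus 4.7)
My plan is to follow the template used in the generalized Grassmannian cases earlier in the paper (Theorems \ref{longroot} and the analogous argument in the proof of Proposition \ref{notsame}): construct a rank-$t$ subbundle on the total space of the universal family of lines via the relative Harder--Narasimhan filtration along the $\mathbb{P}^1$-fibers of $q_2$, and then invoke the Descent Lemma (Lemma \ref{Descent}) together with Proposition \ref{tangent2} to push it down to $X$. The hypothesis that $\delta_i$ is not an exposed short root is precisely what makes Proposition \ref{tangent2} applicable, giving $T_{\mathcal{U}_i^{\delta_i^c}/X}|_{\widetilde{L}} \cong \mathcal{O}_{\widetilde{L}}(-1)^{N}$ for every $q_2$-fiber $\widetilde{L}$.

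First I would restrict to the open set $U_E^{(\delta_i)} \subseteq \mathcal{M}_i^{\delta_i^c}$ over which $E|_L$ realizes the generic splitting type. The relative Harder--Narasimhan filtration of $q_1^*E$ along the $q_2$-fibers then yields, over $q_2^{-1}(U_E^{(\delta_i)})$, a rank-$t$ subbundle $\widetilde{K} \subseteq q_1^*E$ whose restriction to each $\widetilde{L}=q_2^{-1}(l)$ is precisely the summand $\bigoplus_{s=1}^{t} \mathcal{O}_L(a_s^{(\delta_i)})$; the mere gap $a_t^{(\delta_i)}-a_{t+1}^{(\delta_i)}\geq 1$ is what is needed for this jump of the relative HN filtration to be well defined as a subbundle. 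In Case I one has $\mathcal{U}_i^{\delta_i^c}=X$ and $q_1=\mathrm{id}_X$, so $\widetilde{K}$ is already a subbundle of $E$ over $V_E = q_2^{-1}(U_E^{(\delta_i)})$ and no descent is required; this is precisely why the threshold $1$ suffices in Case I.

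In Case II the heart of the argument is to verify the vanishing required by Lemma \ref{Descent} for the submersion $q_1$, namely
\[
\mathrm{Hom}\!\left(T_{\mathcal{U}_i^{\delta_i^c}/X},\ \mathcal{H}om(\widetilde{K},\widetilde{Q})\right)=0,
\]
where $\widetilde{Q}=q_1^*E/\widetilde{K}$. Pushing forward along the $\mathbb{P}^1$-fibration $q_2$ reduces this to fiberwise vanishing on each $\widetilde{L}$. Invoking Proposition \ref{tangent2}, the sheaf $T_{\mathcal{U}_i^{\delta_i^c}/X}^{\vee}\otimes \mathcal{H}om(\widetilde{K},\widetilde{Q})|_{\widetilde{L}}$ decomposes as a direct sum of line bundles of the form $\mathcal{O}_L\bigl(1+a_{s'}^{(\delta_i)}-a_s^{(\delta_i)}\bigr)$ with $s\leq t<s'$; the sharper gap hypothesis $a_t^{(\delta_i)}-a_{t+1}^{(\delta_i)}\geq 2$ forces each such exponent to be $\leq -1$, killing all global sections. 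Granting this, $\widetilde{K}$ descends to a rank-$t$ subbundle $K$ of $E$ over $V_E=q_1(q_2^{-1}(U_E^{(\delta_i)}))$, and a standard saturation of $K$ inside $E$ extends it to a normal subsheaf on all of $X$ with the required form on lines of $U_E^{(\delta_i)}$.

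I do not expect any conceptual obstacle beyond these ingredients. The whole proof is the natural generalization, from the single-factor generalized Grassmannian setting of Theorem \ref{longroot}, to the multi-factor rational homogeneous space $X$, with the two cases I and II reflecting whether the relevant family of lines is already a fibration of $X$ (no descent, threshold $1$) or sits properly above $X$ (Descent Lemma used, threshold $2$). The only non-routine bookkeeping is matching the two numerical thresholds with the splitting $T_{\mathcal{U}_i^{\delta_i^c}/X}|_{\widetilde{L}}=\mathcal{O}_{\widetilde{L}}(-1)^N$, and this is exactly what the ``$+1$'' coming from $T^{\vee}$ forces in the degree estimate above.
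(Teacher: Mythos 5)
Your proposal is correct and is essentially the argument the paper intends: the paper's own proof simply cites Proposition \ref{tangent2} and the Descent Lemma and defers to the proof of Theorem 5.7 of \cite{D-F-G} (itself modeled on the relative Harder--Narasimhan construction of \cite{O-S-S}, Theorem 2.1.4), which is exactly the relative HN filtration plus descent argument you spell out, with the same degree count $1+a_{s'}^{(\delta_i)}-a_s^{(\delta_i)}\leq -1$ explaining the threshold $2$ in Case II and the absence of descent explaining the threshold $1$ in Case I.
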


\begin{proof}
	The Proposition \ref{tangent2} and the Lemma \ref{Descent} play important roles in our proof and our proof applies almost verbatim the proof of Theorem 5.7 given in \cite{D-F-G}.
\end{proof}

\begin{cor}\label{longrc}
With the same assumption as Theorem \ref{longroot2}. For a $\delta_i$-semistable $r$-bundle $E$ over $X$ of type $\underline{a}_E^{(\delta_i)}=(a_1^{(\delta_i)},\ldots,a_r^{(\delta_i)}), a_1^{(\delta_i)}\geq\cdots\geq a_r^{(\delta_i)}$ with respect to $\mathcal{M}_i^{\delta_i^c}$, we have \[
a_s^{(\delta_i)}-a_{s+1}^{(\delta_i)}\leq 1~~ \text{for all}~ s=1,\ldots, r-1.
\]
In particular, if $N(\delta_i)$ fits Case I, then we have $a_s^{(\delta_i)}$'s are constant for all $1\leq s\leq r$.
\end{cor}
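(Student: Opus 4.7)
The plan is a standard slope-destabilization argument by contradiction, with Theorem \ref{longroot2} as the essential input; the corollary itself reduces to bookkeeping of slopes.

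First I would handle Case II. Suppose for contradiction that $a_t^{(\delta_i)} - a_{t+1}^{(\delta_i)} \geq 2$ for some $t$ with $1 \leq t \leq r-1$. Then Theorem \ref{longroot2} produces a normal subsheaf $K \subset E$ of rank $t$ whose restriction to a general line $L$ in $\mathcal{M}_i^{\delta_i^c}$ splits as $K|_L \cong \bigoplus_{s=1}^t \mathcal{O}_L(a_s^{(\delta_i)})$. By the definition of $c_1^{(\delta_i)}$ given just before Definition \ref{i}, this yields
$$\mu^{(\delta_i)}(K) = \frac{1}{t}\sum_{s=1}^t a_s^{(\delta_i)}, \qquad \mu^{(\delta_i)}(E) = \frac{1}{r}\sum_{s=1}^r a_s^{(\delta_i)}.$$
Because $a_1^{(\delta_i)} \geq \cdots \geq a_t^{(\delta_i)} > a_{t+1}^{(\delta_i)} \geq \cdots \geq a_r^{(\delta_i)}$, the average of the first $t$ terms is at least $a_t^{(\delta_i)}$, while the average of the last $r-t$ terms is at most $a_{t+1}^{(\delta_i)} < a_t^{(\delta_i)}$. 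A short weighted-average estimate then forces $\mu^{(\delta_i)}(K) > \mu^{(\delta_i)}(E)$, contradicting $\delta_i$-semistability in the sense of Definition \ref{i}. Hence $a_s^{(\delta_i)} - a_{s+1}^{(\delta_i)} \leq 1$ for every $s$.

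For the \emph{in particular} assertion, I would run the identical argument in Case I. There Theorem \ref{longroot2} applies under the weaker hypothesis $a_t^{(\delta_i)} - a_{t+1}^{(\delta_i)} \geq 1$, so any strict inequality $a_t^{(\delta_i)} > a_{t+1}^{(\delta_i)}$ already produces the destabilizing $K$ and the same strict slope inequality $\mu^{(\delta_i)}(K) > \mu^{(\delta_i)}(E)$. Therefore no such strict gap can occur, and the splitting type must be constant.

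There is no genuine obstacle in the corollary itself; all of the substantive work — the Descent Lemma of Section 4.3 combined with the splitting type computations of the relative tangent bundle in Sections 4.1–4.2 — is already packaged into Theorem \ref{longroot2}. The only care needed is to verify that the subsheaf $K$ supplied by that theorem is genuinely $\mu^{(\delta_i)}$-destabilizing, which is immediate from the ordering of the $a_s^{(\delta_i)}$'s.
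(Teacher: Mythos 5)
Your proposal is correct and is essentially the paper's own argument: the paper proves the analogous corollary for generalized Grassmannians by exactly this contradiction — a gap of size $\geq 2$ (or $\geq 1$ in Case I) yields via the theorem a normal subsheaf $K$ with $K|_L\cong\oplus_{s=1}^{t}\mathcal{O}_L(a_s^{(\delta_i)})$ on general lines, whence $\mu^{(\delta_i)}(K)>\mu^{(\delta_i)}(E)$, contradicting $\delta_i$-semistability. Your weighted-average verification of the strict slope inequality is the only detail the paper leaves implicit, and it is right.
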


When $\alpha_j$ is an exposed short root of $(\mathcal{D},I)$, $\mathcal{M}_i^{\delta_i^c}$ is not the space of of lines of class $\check{\delta}_i$, but only a closed $G$-orbit.

\begin{thm}\label{shortroot2}
Fix $\delta_i\in I_i$ and assume that $\delta_i$ is an exposed short root. Let $E$ be a holomorphic $r$-bundle over $X$ of type $\underline{a}_E^{(\delta_i)}=(a_1^{(\delta_i)},\ldots,a_r^{(\delta_i)}), ~ a_1^{(\delta_i)}\geq\cdots\geq a_r^{(\delta_i)}$ with respect to $\mathcal{M}_i^{\delta_i^c}$. If for some $t<r$,
\[
a_t^{(\delta_i)}-a_{t+1}^{(\delta_i)}\geq
\left\{
\begin{array}{ll}
1, & and~N(\delta_i)~ \text{fits Case I}\\
4, & and ~N(\delta_i)~ \text{fits Case II},
\end{array}
\right.\]
then there is a normal subsheaf $K\subset E$ of rank $t$ with the following properties: over the open set $V_E=q_1(q_2^{-1}(U_E^{(\delta_i)}))\subset X$, where $U_E^{(\delta_i)}$ is an open set in $\mathcal{M}^{\delta_i^c}$, the sheaf $K$ is a subbundle of $E$, which on the line $L\subset X$ given by $l\in U_E^{(\delta_i)}$ has the form
$$K|_L\cong\oplus_{s=1}^{t}\mathcal{O}_L(a_s^{(\delta_i)}).$$
\end{thm}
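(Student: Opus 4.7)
The plan is to follow the proof of Theorem \ref{longroot2} almost verbatim, with the single substantive change of replacing Proposition \ref{tangent2} by Proposition \ref{tangent4} so as to absorb the more negative splitting types of $T_{\mathcal{U}_i^{\delta_i^c}/X}$ forced by the exposed short root. The argument has three movements: construct a relative subbundle on the universal family, invoke the Descent Lemma to push it down to $X$, and verify the descent hypothesis fibrewise.

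In Case I one has $\mathcal{U}_i^{\delta_i^c}=X$ with $q_1=\mathrm{id}$ and $q_2\colon X\to\mathcal{M}_i^{\delta_i^c}$ a $\mathbb{P}^1$-bundle, so no descent is needed. The gap $a_t^{(\delta_i)}-a_{t+1}^{(\delta_i)}\geq 1$ together with the standard relative Harder--Narasimhan construction produces an open subset $U_E^{(\delta_i)}\subset\mathcal{M}_i^{\delta_i^c}$ and a rank $t$ subsheaf $K\subset E$ on $V_E=q_2^{-1}(U_E^{(\delta_i)})$ whose restriction to every line has the prescribed form; this is identical to the Case I argument in Theorem \ref{longroot2}.

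In Case II, I first pull $E$ back to $\mathcal{U}_i^{\delta_i^c}$ via $q_1$. On each $q_2$-fibre $\widetilde L\cong\mathbb{P}^1$ the pullback $q_1^{\ast}E|_{\widetilde L}\cong E|_L$ splits with type $\underline{a}_E^{(\delta_i)}$, and the gap $a_t^{(\delta_i)}-a_{t+1}^{(\delta_i)}\geq 4$ together with semicontinuity singles out an open $U_E^{(\delta_i)}\subset\mathcal{M}_i^{\delta_i^c}$ over which the relevant direct image is locally free. The construction used in the proof of Theorem~5.7 of \cite{D-F-G} then produces a rank $t$ subbundle $\widetilde K\subset q_1^{\ast}E$ over $q_2^{-1}(U_E^{(\delta_i)})$, with quotient $\widetilde Q$, such that for every line $\widetilde L$
\[
\widetilde K|_{\widetilde L}\cong\bigoplus_{s=1}^{t}\mathcal{O}_{\widetilde L}(a_s^{(\delta_i)}),\qquad
\widetilde Q|_{\widetilde L}\cong\bigoplus_{s=t+1}^{r}\mathcal{O}_{\widetilde L}(a_s^{(\delta_i)}).
\]

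The main obstacle, and the only genuinely new computation, is to verify the descent hypothesis of Lemma \ref{Descent}, namely $\mathrm{Hom}(T_{\mathcal{U}_i^{\delta_i^c}/X},\mathcal{H}om(\widetilde K,\widetilde Q))=0$. Pushing forward by $q_2$ and applying Leray, this reduces to fibrewise vanishing on each $\widetilde L$. On $\widetilde L$ the bundle $\mathcal{H}om(\widetilde K,\widetilde Q)$ is a direct sum of line bundles $\mathcal{O}_{\widetilde L}(a_s^{(\delta_i)}-a_j^{(\delta_i)})$ with $s>t\geq j$, every summand of degree at most $a_{t+1}^{(\delta_i)}-a_t^{(\delta_i)}\leq -4$; by Proposition \ref{tangent4} the dual relative tangent $T_{\mathcal{U}_i^{\delta_i^c}/X}^{\vee}|_{\widetilde L}$ is a direct sum of line bundles of degrees in $\{1,2,3\}$, the extremal value $3$ coming precisely from the $(-3)$ splitting of the $G_2$ case. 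Hence every summand of $T_{\mathcal{U}_i^{\delta_i^c}/X}^{\vee}\otimes\mathcal{H}om(\widetilde K,\widetilde Q)|_{\widetilde L}$ has degree $\leq 3-4=-1$ and so carries no global sections on $\widetilde L\cong\mathbb{P}^1$. It is exactly this $G_2$ extremum which forces the numerical threshold $4$ (as opposed to $2$ in the long-root Theorem \ref{longroot2}); once the fibrewise vanishing is in hand the Descent Lemma yields the required $K\subset E$ on $V_E$ and the remaining verifications are formal.
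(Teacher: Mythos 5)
Your proposal is correct and follows essentially the same route as the paper: the paper's own proof is a one-line appeal to Lemma \ref{Descent}, Proposition \ref{tangent4}, and the argument of Theorem 5.7 in \cite{D-F-G}, which is exactly the construct-on-the-universal-family, verify-fibrewise-vanishing, descend argument you spell out. Your degree count $3-4=-1$, with the $3$ coming from the $(-3)$ summand allowed by Proposition \ref{tangent4}, correctly isolates why the threshold in Case II is $4$ rather than the $2$ of the long-root case.
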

\begin{proof}
	The Proposition \ref{tangent4} and the Lemma \ref{Descent} play important roles in our proof and our proof applies almost verbatim the proof of Theorem 5.7 given in \cite{D-F-G}.
\end{proof}

\begin{cor}\label{shortrc}
For a $\delta_i$-semistable $r$-bundle $E$ over $X$ of type $\underline{a}_E^{(\delta_i)}=(a_1^{(\delta_i)},\ldots,a_r^{(\delta_i)}), a_1^{(\delta_i)}\geq\cdots\geq a_r^{(\delta_i)}$ with respect to $\mathcal{M}_i^{\delta_i^c}$, we have \[
	a_s^{(\delta_i)}-a_{s+1}^{(\delta_i)}\leq 3~~ \text{for all}~ s=1,\ldots, r-1.
	\]
	In particular, if $N(\delta_i)$ fits Case I, then we have $a_s^{(\delta_i)}$'s are constant for all $1\leq s\leq r$.
\end{cor}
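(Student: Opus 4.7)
The plan is to deduce this corollary from Theorem \ref{shortroot2} by contraposition, exactly parallel to the argument used to derive Corollary \ref{longrc} from Theorem \ref{longroot2}. Suppose for contradiction that $E$ is $\delta_i$-semistable but some consecutive gap $a_t^{(\delta_i)} - a_{t+1}^{(\delta_i)}$ violates the claimed bound. In the generic (Case II) setting, this would mean the gap is at least $4$, which is precisely the hypothesis of Theorem \ref{shortroot2}; in the Case I setting (when $N(\delta_i)\subseteq I_i$), the hypothesis is weaker, requiring only a gap of at least $1$.

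Under either hypothesis, Theorem \ref{shortroot2} produces a normal subsheaf $K\subset E$ of rank $t$ that restricts to $\bigoplus_{s=1}^{t}\mathcal{O}_L(a_s^{(\delta_i)})$ on a general line $L$ in the family $\mathcal{M}_i^{\delta_i^c}$. By the very definition of $c_1^{(\delta_i)}$ given before Definition \ref{i}, the splitting on a general line computes the slope with respect to $\mathcal{M}_i^{\delta_i^c}$, so
\[
\mu^{(\delta_i)}(K)=\frac{a_1^{(\delta_i)}+\cdots+a_t^{(\delta_i)}}{t},\qquad \mu^{(\delta_i)}(E)=\frac{a_1^{(\delta_i)}+\cdots+a_r^{(\delta_i)}}{r}.
\]
Clearing denominators, the required inequality $\mu^{(\delta_i)}(K)>\mu^{(\delta_i)}(E)$ is equivalent to $(r-t)(a_1^{(\delta_i)}+\cdots+a_t^{(\delta_i)})>t(a_{t+1}^{(\delta_i)}+\cdots+a_r^{(\delta_i)})$. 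The monotonicity $a_1^{(\delta_i)}\geq\cdots\geq a_r^{(\delta_i)}$ together with the strict drop $a_t^{(\delta_i)}>a_{t+1}^{(\delta_i)}$ forces both $a_1^{(\delta_i)}+\cdots+a_t^{(\delta_i)}\geq t a_t^{(\delta_i)}>t a_{t+1}^{(\delta_i)}\geq a_{t+1}^{(\delta_i)}+\cdots+a_r^{(\delta_i)}\cdot(t/(r-t))$, which after rearranging yields the strict inequality above. This contradicts $\delta_i$-semistability of $E$, so no such $t$ can exist, establishing $a_s^{(\delta_i)}-a_{s+1}^{(\delta_i)}\leq 3$ for all $s$.

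For the ``in particular'' statement, observe that in Case I the relevant threshold coming from Theorem \ref{shortroot2} is $1$ rather than $4$. Hence semistability rules out any $s$ with $a_s^{(\delta_i)}-a_{s+1}^{(\delta_i)}\geq 1$, and combined with monotonicity this forces $a_s^{(\delta_i)}=a_{s+1}^{(\delta_i)}$ for every $s$, i.e.\ the splitting type is constant. No genuine obstacle is expected; the only mildly delicate point is to verify that the slope comparison is strict and not merely non-strict, but this is automatic once one uses that the prescribed gap at position $t$ is at least $1$.
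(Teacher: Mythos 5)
Your proposal is correct and follows essentially the same route the paper takes (the paper leaves this corollary without an explicit proof, but its analogous corollary for generalized Grassmannians is proved by exactly this contradiction: a gap exceeding the threshold yields, via the destabilization theorem, a normal subsheaf $K$ with $\mu^{(\delta_i)}(K)>\mu^{(\delta_i)}(E)$, contradicting $\delta_i$-semistability). Your slope computation and the strictness of the inequality, as well as the sharper Case~I threshold giving constancy, are all handled correctly.
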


From Proposition \ref{polyuni}, Corollary \ref{longrc} and Corollary \ref{shortrc}, we can have the following result.
\begin{cor}
Let $X=G/B$, where $G$ is a semi-simple Lie group and $B$ is a Borel subgroup of $G$. If an $r$-bundle $E$ is $\delta_i$-semistable for all $i$ and $\delta_i~(1\le i\le m)$ over $X$, then $E$ splits as a direct sum of line bundles.
\end{cor}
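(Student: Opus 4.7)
The plan is to observe that taking $P = B$ forces every node of every Dynkin diagram to be marked, which places us uniformly in Case I of the Grauert--M\"ulich--Barth dichotomy, and then to feed the resulting ``constant splitting type'' conclusion into Proposition \ref{polyuni}. Concretely, since $B$ is a Borel subgroup of the semisimple group $G = G_1 \times \cdots \times G_m$, the parabolic decomposition of $X = G/B$ becomes $G_1/B_1 \times \cdots \times G_m/B_m$, i.e.\ $I_i = D_i$ for every $i$. In particular, for any $\delta_i \in I_i$, the adjacent set $N(\delta_i) \subseteq D_i = I_i$, so $N(\delta_i)$ \emph{always} fits Case I, and moreover $\mathcal{U}_i^{\delta_i^c} = X$ with a natural projection $X \to \mathcal{M}_i^{\delta_i^c}$.

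Given this, I would split into the two standard sub-cases according to whether $\alpha_{\delta_i}$ is an exposed short root. If $\alpha_{\delta_i}$ is not exposed short, the $\delta_i$-semistability hypothesis together with Corollary \ref{longrc} (which applies verbatim since we are in Case I) gives that the splitting type $(a_1^{(\delta_i)}, \ldots, a_r^{(\delta_i)})$ of $E$ along the family $\mathcal{M}_i^{\delta_i^c}$ satisfies $a_s^{(\delta_i)} - a_{s+1}^{(\delta_i)} \leq 1$; but the ``In particular'' clause of that corollary strengthens this in Case I to the statement that all $a_s^{(\delta_i)}$ are equal. Symmetrically, if $\alpha_{\delta_i}$ is exposed short, Corollary \ref{shortrc} gives the same conclusion in Case I: the $a_s^{(\delta_i)}$'s are all equal. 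Either way, for each pair $(i,\delta_i)$ the restriction of $E$ to every line in $\mathcal{M}_i^{\delta_i^c}$ is of the form $\mathcal{O}_L(a^{(\delta_i)})^{\oplus r}$ for a single integer $a^{(\delta_i)}$.

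This is precisely the hypothesis of Proposition \ref{polyuni}: the bundle $E$ is poly-uniform with respect to all special families of lines, and for every $i$ and every $\delta_i \in I_i$ the splitting type is $(a^{(\delta_i)}, \ldots, a^{(\delta_i)})$. Proposition \ref{polyuni} then yields directly that $E$ splits as a direct sum of line bundles on $X$, finishing the proof.

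There is no real obstacle here beyond correctly matching the hypotheses: the subtle point is just to notice that the Borel case collapses both Case~I/Case~II and long-root/exposed-short-root distinctions into the uniform ``Case~I gives constant splitting type'' conclusion of Corollaries \ref{longrc} and \ref{shortrc}. Once that observation is in hand, the corollary is immediate from Proposition \ref{polyuni}.
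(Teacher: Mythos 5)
Your proposal is correct and follows essentially the same route as the paper: the paper likewise notes that the semistability hypothesis combined with Corollaries \ref{longrc} and \ref{shortrc} forces constant splitting type $(a^{(\delta_i)},\ldots,a^{(\delta_i)})$ along every special family (the Borel case putting every $\delta_i$ in Case I), and then concludes by Proposition \ref{polyuni}. Your write-up simply makes the ``$I_i = D_i$, hence Case I always applies'' observation more explicit than the paper does.
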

\begin{proof}
The assumption tells us that $E$ is poly-uniform with respect to $\mathcal{M}_i^{\delta_i^c}$ for each $i$ and $\delta_i$ and the splitting type is $(a^{(\delta_i)}, \ldots, a^{(\delta_i)})$ by Corollary \ref{longrc} and Corollary \ref{shortrc}. We therefore conclude by Proposition \ref{polyuni}.
\end{proof}

\section*{Acknowledgements}
We would like to show our great appreciation to professor B. Fu, L. Manivel, R. Mu\~{n}oz, G. Occhetta, L. E. Sol\'{a} Conde and K. Watanabe for useful discussions. In particular, we would like to thank professor L. E. Sol\'{a} Conde for recommending us their interesting papers and explaining the details in their papers with patience, especially for the method of calculating the relative tangent bundles between two generalized flag manifolds.
\section{Appendix}
\begin{table}[htbp]
	\caption{$E_6$-type}\label{E6}
	\centering
	\begin{tabular}{|c|c|c|c|c|}
		\hline
		$G$&node $k$ & $H/P_{N(k)}$ & tag&weights \\
		\hline
		$E_6$&$1$ & $D_5 /P_5$ & $(00001)$ &\tabincell{c}{$\{(12211),(11211),(01211),(11111),(11101),$\\$(01111),(01101),(00111),(00101),(00001)\}$}\\
		\hline
		$E_6$&$2$ & $A_5/P_3$ & $(00100)$ &\tabincell{c}{$\{(11100),(11110),(11111),(01100),(01110),$ \\ $(01111),(00100),(00110),(00111)\}$}\\
		\hline
		$E_6$&$3$ & $A_1/P_1\times A_4/P_2$ & $(1,0100)$ & \tabincell{c}{$\{(1)\}\times\{(1100),(1110),(1111),$\\$(0100),(0110),(0111)\}$}\\
		\hline
		$E_6$&$4$ & $A_2/P_2\times A_1/P_1\times A_2/P_1$ & $(01,1,10)$ & $\{(01),(11)\}\times\{(1)\}\times\{(10),(11)\}$\\
		\hline
		$E_6$&$5$ & $A_4/P_3\times A_1/P_1$ & $(0010,1)$ &  \tabincell{c}{$\{(1110),(1111),(0110),$\\$(0111),(0010),(0011)\}\times\{(1)\}$}\\
		\hline
		$E_6$&$6$ & $D_5/P_5$ & $(00001)$ &\tabincell{c}{ $\{(12211),(11211),(01211),(11111),(11101),$\\$(01111),(01101),(00111),(00101),(00001)\}$}\\
		\hline
	\end{tabular}
\end{table}

\begin{table}[htbp]
	\caption{$E_7$-type}\label{E7}
	\centering
	\begin{tabular}{|c|c|c|c|c|}
		\hline
		$G$&node $k$ & $H/P_{N(k)}$ & tag&weights \\
		\hline
		$E_7$&$1$ & $D_6/P_6$ & $(000001)$ &\tabincell{c}{$\{(122211),(112211),(012211),(111111),$\\$ (111101),(011111),(001101),(000111),$\\$ (000101),(000001),(111211),(011211),$\\$ (011101),(001211),(001111)\}$}\\
		\hline
		$E_7$&$2$ & $A_6/P_3$ & $(001000)$ &\tabincell{c}{$\{(111000),(111100),(111110),(111111),$ \\ $(011000),(011100),(011110),(011111),$ \\ $(001000),(001100),(001110),(001111)\}$}\\
		\hline
		$E_7$&$3$ & $A_1/P_1\times A_5/P_2$ & $(1,01000)$ & \tabincell{c}{$\{(1)\}\times\{(11000),(11100),$\\$(11110),(11111),(01000),$\\$ (01100),(01110),(01111)\}$}\\
		\hline
		$E_7$&$4$ & $A_2/P_2\times A_1/P_1\times A_3/P_1$ & $(01,1,100)$ & $\{(01),(11)\}\times\{(1)\}\times\{(100),(110),(111)\}$\\
		\hline
		$E_7$&$5$ & $A_4/P_3\times A_2/P_1$ & $(0010,10)$ &  \tabincell{c}{$\{(1110),(1111),(0110),(0111)$\\$(0010),(0011)\}\times\{(10),(11)\}$}\\
		\hline
		$E_7$&$6$ & $D_5/P_5\times A_1/P_1$ & $(00001,1)$ &\tabincell{c}{ $\{(12211),(11211),(01211),(11111),$\\$(11101),(01111),(01101),(00111),$\\$ (00101),(00001)\}\times\{(1)\}$}\\
		\hline
		$E_7$&$7$ & $E_6/P_6$ & $(000001)$ &\tabincell{c}{ $\{(000001),(000011),(000111),(010111),$\\$(001111),(101111),(011111),(111111)$\\$(011211),(111211),(011221),(112211),$\\$(111211),(112221),(112321),(122321)\}$}\\
		\hline
	\end{tabular}
\end{table}

\begin{table}[htbp]
	\caption{$E_8$-type}\label{E8}
	\centering
	\begin{tabular}{|c|c|c|c|c|}
		\hline
		$G$&node $k$ & $H/P_{N(k)}$ & tag&weights \\
		\hline
		$E_8$&$1$ & $D_7/P_7$ & $(0000001)$ &\tabincell{c}{$\{(1222211),(1122211),(1112211),$\\$(1111211), (1111111),(1111101),$\\$ (0122211),(0112211), (0111211),$\\$ (0111111),(0111101),(0012211),$\\$ (0011211), (0011111),(0011101),$\\$ (0001211),(0001111), (0001101),$\\$ (0000111),(0000101),(0000001)\}$}\\
		\hline
		$E_8$&$2$ & $A_7/P_3$ & $(0010000)$ &\tabincell{c}{$\{(1110000),(1111000),(1111110),$\\$(1111110),(1111111) ,(0110000),$\\$ (0111000), (0111100),(0111110),$\\$ (0111111),(0010000),(0011000),$ \\ $ (0011100),(0011110),(0011111)\}$}\\
		\hline
		$E_8$&$3$ & $A_1/P_1\times A_6/P_2$ & $(1,010000)$ & \tabincell{c}{$\{(1)\}\times\{(110000),(111000),$\\$ (111100),(111110),(111111),$\\$ (010000),(011000), (011100),$\\$ (011110),(011111)\}$}\\
		\hline
		$E_8$&$4$ & $A_2/P_2\times A_1/P_1\times A_4/P_1$ & $(01,1,1000)$ & \tabincell{c}{$\{(01),(11)\}\times\{(1)\}\times\{(1000),$\\$(1100),(1110),(1111)\}$}\\
		\hline
		$E_8$&$5$ & $A_4/P_3\times A_3/P_1$ & $(0010,100)$ &  \tabincell{c}{$\{(1110),(1111),(0110),(0111),$\\$(0010),(0011)\}\times\{(100),(110),(111)\}$}\\
		\hline
		$E_8$&$6$ & $D_5/P_5\times A_2/P_1$ & $(00001,10)$ &\tabincell{c}{ $\{(12211),(11211),(01211),$\\$(11111),(11101),(01111),$\\$(01101),(00111), (00101),$\\$ (00001)\}\times\{(10),(11)\}$}\\
		\hline
		$E_8$&$7$ & $E_6/P_6\times A_1/P_1$ & $(000001,1)$ &\tabincell{c}{ $\{(000001),(000011),(000111),$\\$ (010111),(001111),(101111),$\\$ (011111),(111111),(011211),$\\$ (111211),(011221),(112211),$\\$(111211),(112221),(112321),$\\$ (122321)\}\times\{(1)\}$}\\
		\hline
		$E_8$&$8$ & $E_7/P_7$ & $(0000001)$ &\tabincell{c}{ $\{(000001),(000011),(000111),$\\$(0101111),(0001111),(0112221),$\\$ (0112211), (0112111)(0111111),$\\$ (0011111),(1112221),(1112211),$\\$ (1112111),(1111111),(1011111),$\\$ (1224321),(1223321),(1123321),$\\$ (1223221), (1123221),(1122221),$\\$ (1223211),(1123211),(1122211),$\\$ (1122111),(1234321),(2234321)\}$}\\	
		\hline
	\end{tabular}
\end{table}

\begin{table}[htbp]
	\caption{$F_4,G_2$-type}\label{FG}
	\centering
	\begin{tabular}{|c|c|c|c|c|}
		\hline
		$G$&node $k$ & $H/P_{N(k)}$ & tag&weights \\
		\hline
		$F_4$&$1$ & $C_3/P_3$ & $(001)$ &$\{(121),(111),(011),(221),(021),(001)\}$\\
		$F_4$&$2$ & $A_1/P_1\times A_2/P_1$ & $(1,10)$ & $\{(1)\}\times\{(10),(11)\}$\\
		$F_4$&$3$ & $A_2/P_2\times A_1/P_1$ & $(02,1)$ & $\{(01),(11)\}\times\{(1)\}$\\
		$F_4$&$4$ & $B_3/P_3$ & $(001)$ & $\{(122),(112),(012),(111),(011),(001)\}$\\
		\hline
		$G_2$&$1$ & $A_1/P_1$ & $(3)$ &$\{(1)\}$\\
		$G_2$&$2$ & $A_1/P_1$ & $(1)$ & $\{(1)\}$\\
		\hline
	\end{tabular}
\end{table}

\bibliography{ref}

\end{document}